\newtheorem{theorem}{Theorem}[section]
\newtheorem{lemma}[theorem]{Lemma}
\newtheorem{proposition}[theorem]{Proposition}
\newtheorem{corollary}[theorem]{Corollary}
\newtheorem{conjecture}[theorem]{Conjecture}
\theoremstyle{definition}
\newtheorem{definition}[theorem]{Definition}
\newcommand\ab[1]{\lvert #1 \rvert}
\newcommand\bab[1]{\left\lvert #1 \right\rvert}
\newcommand\dd{\,\mathrm{d}}
\newcommand\inner[2]{\left\langle #1, #2 \right\rangle}
\newcommand{\norm}[1]{\lVert#1\rVert}
\newcommand\R{\mathbb{R}}
\DeclareMathOperator*\odotprod{\odot}
\newlist{lemenum}{enumerate}{1}
\setlist[lemenum]{label=(\alph*), ref=\thelemma(\alph*)}
\crefname{figure}{Figure}{Figures}
\title{\vspace{-0.8cm}Counting subgraphs in locally dense graphs}
\date{}
\author{
Domagoj Brada\v{c}\thanks{Department of Mathematics, ETH Z\"urich, Z\"urich, Switzerland. Research supported in part by SNSF grant 200021-228014. Email: \textbf{\{domagoj.bradac, benjamin.sudakov\}@math.ethz.ch}.}
\and 
Benny Sudakov\footnotemark[1]
\and
Yuval Wigderson\footnotemark[2]\thanks{Institute for Theoretical Studies, ETH Z\"urich, Z\"urich, Switzerland. 
   Supported by Dr.\ Max R\"{o}ssler, the Walter Haefner Foundation, and the ETH Z\"{u}rich Foundation. Email: {\textbf{yuval.wigderson@eth-its.ethz.ch}}.}
}
\begin{document}
\maketitle

\begin{abstract}
    A graph $G$ is said to be $p$-locally dense if every induced subgraph of $G$ with linearly many vertices has edge density at least $p$. A famous conjecture of Kohayakawa, Nagle, R\"odl, and Schacht predicts that locally dense graphs have, asymptotically, at least as many copies of any fixed graph $H$ as are found in a random graph of edge density $p$.

    In this paper, we prove several results around the KNRS conjecture. First, we prove that certain natural gluing operations on $H$ preserve this property, thus proving the conjecture for many graphs $H$ for which it was previously unknown. Secondly, we study a stability version of this conjecture, and prove that for many graphs $H$, approximate equality is attained in the KNRS conjecture if and only if the host graph $G$ is quasirandom. Finally, we introduce a weakening of the KNRS conjecture, which requires the host graph to be nearly degree-regular, and prove this conjecture for a larger family of graphs. Our techniques reveal a surprising connection between these questions, semidefinite optimization, and the study of copositive matrices.
\end{abstract}

\section{Introduction}
\subsection{Background}
Many of the most basic questions in extremal graph theory ask to understand, for a fixed graph $H$, how many copies of $H$ can appear in a large graph $G$ with certain constraints. For example, Mantel's theorem states that if $H=K_3$, then $G$ contains \emph{zero} copies of $H$ only if $G$ has at most $\lfloor v(G)^2/4\rfloor$ edges.

A convenient way of capturing the asymptotic nature of such problems is via the \emph{homomorphism density} $t(H,G)$, which is defined as the probability that a random function $f:V(H) \to V(G)$ is a graph homomorphism, that is, that $f$ maps every edge of $H$ to an edge of $G$. As ${v(G)} \to \infty$, asymptotically almost all homomorphisms are injective, so knowing $t(H,G)$ is essentially the same as knowing how many copies of $H$ there are in $G$. In this language, Mantel's theorem can be stated as $t(K_3,G)=0$ only if $t(K_2,G) \leq \frac 12$. More generally, the Erd\H os--Stone--Simonovits theorem \cite{MR0018807,MR0205876} states that $t(H,G)=0$ only if $t(K_2,G)\leq 1-\frac{1}{\chi(H)-1}+o(1)$, where the $o(1)$ term tends to $0$ as ${v(G)} \to \infty$ (for fixed $H$).

Even more generally, one could ask, for a fixed value of $p=t(K_2,G)$, what is the asymptotic minimum value of $t(H,G)$? 
In general, the answer to this question is extremely complicated \cite{MR2433944,MR2737279,MR3549620}. 
However, for certain bipartite graphs $H$, this problem has a very clean answer. For example, two applications of the Cauchy--Schwarz inequality imply that $t(C_4,G)\geq p^4$ for any graph $G$ satisfying $t(K_2,G)=p$. Moreover, an elementary computation shows that this bound is asymptotically tight if $G$ is a random graph of edge density $p$. A famous conjecture of Sidorenko \cite{sidorenko1993correlation} asserts that something similar should happen for \emph{all} bipartite $H$, namely that
\begin{equation}\label{eq:sidorenko}
t(H,G) \geq p^{e(H)}
\end{equation}
for all bipartite $H$, all $p \in [0,1]$, and all $G$ satisfying $t(K_2,G)=p$. Sidorenko's conjecture has been verified for many natural classes of bipartite graphs (see e.g.\ \cite{MR4237083,MR3893193,MR2738996,MR3456171,MR2607540}), but remains wide open in general. Note that again, Sidorenko's conjecture is best possible if true, since a random graph $G$ of edge density $p$ satisfies $t(H,G)=p^{e(H)}+o(1)$ a.a.s. 

Moreover, there is a sort of converse to the last statement. Following Chung--Graham--Wilson \cite{chung1989quasi}, let us say that a graph $G$ is \emph{$(p,\delta)$-quasirandom} if every set $S \subseteq V(G)$ contains $\frac p2 \ab S^2 \pm \delta v(G)^2$ edges. As proved by Thomason \cite{MR0930498} and Chung--Graham--Wilson \cite{chung1989quasi}, this condition is equivalent (up to a polynomial change in $\delta$) to many other natural notions of ``random-like'' behavior in graphs. In particular, one famous result of Chung--Graham--Wilson is that if $t(K_2,G)=p$ and $t(C_4,G) \leq p^4 + o(1)$, then $G$ is $(p,o(1))$-quasirandom. The \emph{forcing conjecture} (see \cite{MR2080111}) states that, again, such behavior should be more general: for any bipartite $H$ that is not a forest, we have that $t(H,G) = p^{e(H)}+o(1)$ only if $G$ is $(p,o(1))$-quasirandom. It is known (see e.g.\ \cite{MR2738996}) that the forcing conjecture implies Sidorenko's conjecture.

Additionally, the assumption that $H$ is bipartite is crucial for a statement like Sidorenko's conjecture to hold, since $t(H,K_{n,n})=0$ whenever $H$ is non-bipartite, despite the fact that $t(K_2,K_{n,n}) = \frac 12$. Nonetheless, it is tempting to wonder if, by imposing some extra condition on $G$, one can obtain a sensible statement, along the lines of \eqref{eq:sidorenko}, which holds for all $H$. Such a statement was conjectured by Kohayakawa, Nagle, R\"odl, and Schacht \cite{MR2595699}. To state it we first need the following definition.

\begin{definition} \label{def:locally-dense}
    A graph $G$ is called \emph{$(p,\delta)$-locally dense} if, for every $S \subseteq V(G)$ with $\ab S \geq \delta \ab {V(G)}$, we have $e(S) \geq p \frac{\ab S^2}2$.
\end{definition}
One can think of this condition as a ``one-sided'' version of quasirandomness---it does not assert that all large vertex sets have edge density roughly $p$ (as in quasirandomness), but only that all such sets have edge density \emph{at least} $p$. Simple examples, such as the disjoint union of two cliques, show that this is a strictly weaker condition than quasirandomness. Locally dense graphs and hypergraphs have been extensively studied and used over the years, see e.g.\ \cite{MR1276825,MR3666677,MR2595699,MR3171777,MR4201799,MR4608432,
MR0698654,
MR4111729,
MR3474967,
MR3848224,
MR3790065}. With this definition, we can state the conjecture of Kohayakawa--Nagle--R\"odl--Schacht \cite{MR2595699}.
\begin{conjecture}[Kohayakawa--Nagle--R\"odl--Schacht \cite{MR2595699}]\label{conj:KNRS}
    For every graph $H$ and all $p,\varepsilon \in (0,1)$, there exists some $\delta>0$ such that the following holds. If $G$ is a $(p,\delta)$-locally dense graph, then $t(H,G) \geq p^{e(H)}-\varepsilon$.
\end{conjecture}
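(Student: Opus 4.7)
The plan is to first pass to the graphon limit, so that \Cref{conj:KNRS} becomes the statement that $t(H,W) \geq p^{e(H)}$ for every graphon $W$ on $[0,1]$ with the property that $\int_{A \times A} W \geq p \ab{A}^2$ for every measurable $A \subseteq [0,1]$. For bipartite $H$ the local-density hypothesis forces the overall edge density to be at least $p$, so the inequality follows from Sidorenko's inequality \eqref{eq:sidorenko} whenever the latter is known for $H$. The genuinely new difficulty is therefore the non-bipartite case, and this is what the strategy below is aimed at.

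My strategy for the non-bipartite case is to build up the class of graphs satisfying KNRS from base cases via closure operations. The natural base cases are the complete graphs $K_r$: one can attempt $t(K_r, W) \geq p^{\binom{r}{2}}$ by an induction on $r$ which iterates the local-density hypothesis inside the neighborhoods of a well-chosen vertex, or by a flag-algebra / Kruskal--Katona style computation. Granted the base cases, one then tries to show that if $H_1$ and $H_2$ both satisfy KNRS then a suitable gluing $H_1 \cup_F H_2$ along a common subgraph $F$ also does. The model argument would be a Cauchy--Schwarz inequality of the form
\[
  t(H_1 \cup_F H_2, W) \cdot t(F, W) \geq t(H_1, W) \cdot t(H_2, W),
\]
combined with a \emph{one-sided} lower bound on $t(F, W)$ which is precisely what local density is designed to supply when $F$ is a clique (or, more generally, a vertex-transitive subgraph for which an elementary density argument applies). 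Establishing a gluing theorem of the widest possible scope would extend the class of verified $H$ considerably, and this seems to be the direction the abstract advertises as one of the paper's main contributions.

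In parallel I would pursue the semidefinite/copositive route hinted at in the abstract. One can write $\min \{t(H,W)-p^{e(H)} : W \text{ is } p\text{-locally dense}\}$ as the value of an infinite-dimensional convex program in the ``moments'' $t(F,W)$ for subgraphs $F$ of $H$; its dual asks for a non-negative combination of homomorphism-density inequalities that certifies $t(H,W) \geq p^{e(H)}$. Because the local-density constraint is indexed by indicator functions of arbitrary measurable sets, the relevant dual certificates naturally live in a cone of copositive (rather than merely positive semidefinite) matrices, which explains the appearance of copositive programming. The hope is to exhibit explicit copositive sum-of-squares certificates for individual $H$ and then to show that such certificates always exist, ideally in a form that is stable under the gluing operations above, since multiplication of densities corresponds to tensor products of the underlying test matrices.

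The main obstacle is the case of non-bipartite $H$ which are not expressible as gluings of smaller graphs along cliques and for which Sidorenko-style reflections are unavailable. Odd cycles $C_{2k+1}$ already encapsulate the difficulty: $t(C_{2k+1},W) \geq p^{2k+1}$ under the mere hypothesis of $p$-local density does not reduce to any direct Cauchy--Schwarz manipulation, because traversing an odd cycle couples the two ``sides'' of $W$ in a way that a one-sided density assumption does not obviously control. Any full proof of \Cref{conj:KNRS} will have to contend with these irreducible cases, and I expect both the gluing and the copositive-programming strategies to hit a genuine wall there; this is presumably why the present paper focuses on partial progress---gluing closure, a stability version, and a degree-regular weakening---rather than a complete resolution.
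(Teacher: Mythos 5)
The statement you were given is \cref{conj:KNRS} itself, which is an \emph{open} conjecture: the paper does not prove it, and your submission does not either---you offer a survey of strategies rather than a proof, as you acknowledge in your closing paragraph. Since there is no proof in the paper to compare against, the only useful feedback is on how well your strategic outline matches what the paper actually establishes. The good news is that your two proposed engines---gluing closure and a copositive/semidefinite reformulation---are indeed the paper's two main lines of attack, and your graphon reduction and observation that the bipartite case follows from Sidorenko are both correct.

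Three inaccuracies are worth flagging. First, you present odd cycles $C_{2k+1}$ as an irreducible obstacle, but Reiher settled them in 2014, and his fractional strengthening of local density (\cref{lem:reiher}, recast in graphon form as \cref{lem:reiher graphon}) is precisely the tool that powers the paper's gluing theorem; the genuinely open cases begin with graphs such as the six-vertex $H_0$ of \cref{thm:H0}, which the paper can only handle under the extra near-regularity hypothesis of \cref{conj:regular-KNRS}. Second, the gluing operation you propose---gluing $H_1$ and $H_2$ along a common subgraph $F$ via a Cauchy--Schwarz inequality of the form $t(H_1\cup_F H_2,W)\,t(F,W)\ge t(H_1,W)\,t(H_2,W)$---is not the operation used in the paper; \cref{thm:main gluing} uses the asymmetric product $H_1\ltimes_I^a H_2$, which fuses $v(H_2)$ copies of $H_1$ at an independent set $I$ and overlays $H_2$ on the images of a distinguished vertex $a$, and its proof goes through the measure-change lemma (\cref{lem:change-measure}) and a weighted KNRS inequality (\cref{lem:count-with-weights}) plus Jensen, not a clique-gluing Cauchy--Schwarz. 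Third, your description of the copositive cone as arising from a \emph{dual} program is not how the paper uses it: the observation (\cref{lem:LD iff copositive}) is the primal statement that $W$ is $p$-locally dense iff the kernel $W-p$ is copositive, and the decisive structural gain---\cref{cor:copositive-regular}, that $W-p$ is actually PSD when $W$ is in addition $p$-regular---is exactly why the paper introduces the weaker regular-KNRS conjecture as the setting where its semidefinite machinery (operator powers, Hadamard and tensor products of PSD kernels) can be deployed.
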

Informally\footnote{Shortly, we will turn to the language of graphons, where these informal asymptotic notions can be made rigorous.}, this conjecture states that if $G$ is a $(p,o(1))$-locally dense graph, then $t(H,G) \geq p^{e(H)}-o(1)$ for any fixed $H$. For brevity, we will say that a graph $H$ is \emph{KNRS} if \cref{conj:KNRS} holds for $H$. Despite a great deal of interest, only a relatively restricted class of graphs are known to be KNRS, which we now briefly summarize.

First, if $H$ is a bipartite graph satisfying Sidorenko's conjecture, then $H$ is KNRS. Indeed, Sidorenko's conjecture is equivalent to \cref{conj:KNRS} in case $\varepsilon=0$ and $\delta=1$. As observed by Kohayakawa, Nagle, R\"odl, and Schacht \cite{MR2595699}, a simple inductive argument shows that all cliques (and, more generally, all complete multipartite graphs) are KNRS. They also noted that the main result from \cite{MR2864650} implies that all line graphs of hypercubes are KNRS. Next, answering a question raised by Kohayakawa--Nagle--R\"odl--Schacht, Reiher \cite{MR3171777} proved that all odd cycles are KNRS. Finally, Lee \cite{MR4201799} showed that all unicyclic graphs and all graphs obtained from a cycle by adding a chord are KNRS. Additionally, he showed that graphs obtained by certain gluing operations are KNRS, and in particular settled \cref{conj:KNRS} for all graphs on at most $5$ vertices.

\subsection{Our results}
In this paper, we prove a number of results around \cref{conj:KNRS}. Our first main result shows that certain ``highly symmetric'' graphs, which are obtained from smaller KNRS graphs by a certain gluing operation, are themselves KNRS. 
\begin{restatable}{definition}{gluingdef}
     Let $H_1, H_2$ be two graphs, let $I$ be an independent set in $H_1$ and let $a$ be a vertex in $V(H_1) \setminus I$. We define the graph $H_1 \ltimes_I^a H_2$ as follows. Let $A_1, \dots, A_{v(H_2)}$ be copies of $H_1$ with the set $I$ identified and otherwise disjoint. Furthermore, place a copy of $H_2$ on the set of $v(H_2)$ vertices corresponding to $a$ in each of $A_1, \dots, A_{v(H_2)}.$
\end{restatable}

\begin{figure}[h]
    \centering
    \includegraphics{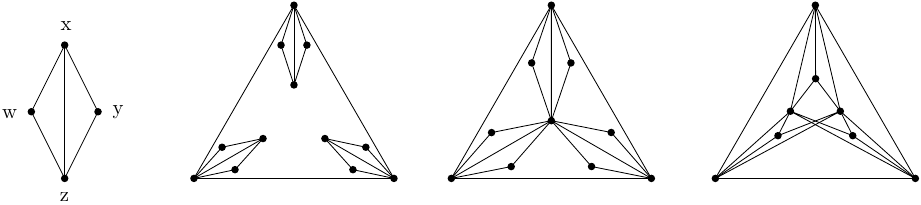}
    \caption{Let $D$ denote the first graph, which is a labelled diamond. The remaining graphs are, from left to right: $D \ltimes_{\varnothing}^x K_3, D \ltimes_{\{z\}}^x K_3$ and $D \ltimes_{\{y, w \}}^x K_3$.}
    \label{fig:products}
\end{figure}

\begin{theorem}\label{thm:gluing preserves KNRS}
    Suppose that $H_1, H_2$ are KNRS, $I$ is an independent set in $H_1$ and $a \in V(H_1) \setminus I$. Then $H_1 \ltimes_I^a H_2$ is KNRS.
\end{theorem}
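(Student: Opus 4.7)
The plan is to establish $t(H,W) \ge p^{e(H)}-\varepsilon$ for any graphon $W$ associated to a sufficiently $(p,\delta)$-locally dense host graph, by a two-step reduction invoking the KNRS property of $H_1$ and $H_2$ in turn. Writing $J := V(H_1)\setminus I$, the vertex set $V(H)$ consists of $I$ together with $v(H_2)$ disjoint copies of $J$ (one per vertex of $H_2$), with the $a$-vertices of these copies hosting a copy of $H_2$; since $I$ is independent in $H_1$, the edges split cleanly and $e(H)=v(H_2)\,e(H_1)+e(H_2)$. Denoting the image of $I$ by $\vec x$ and the image of the $a$-vertices by $\vec z=(z_u)_{u\in V(H_2)}$, and integrating out the ``private'' vertices in each layer, the homomorphism density factors as
\[
    t(H,W)=\int d\vec x\int d\vec z\prod_{u\in V(H_2)}g_{\vec x}(z_u)\prod_{uu'\in E(H_2)}W(z_u,z_{u'}),
\]
where $g_{\vec x}(z)$ is the partial $H_1$-density with $I\mapsto\vec x$ and $a\mapsto z$. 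Setting $P_{\vec x} := \int g_{\vec x}(z)\,dz$, we have $\int P_{\vec x}\,d\vec x = t(H_1,W) \ge p^{e(H_1)} - o(1)$ by the KNRS property of $H_1$.

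For each $\vec x$ with $P_{\vec x}>0$, let $\mu_{\vec x}$ be the probability measure with density $g_{\vec x}/P_{\vec x}$, and let $W_{\vec x}$ be the graphon obtained by pulling $W$ back along a measure-preserving map from Lebesgue to $\mu_{\vec x}$; then the inner integral equals $P_{\vec x}^{v(H_2)}\cdot t(H_2,W_{\vec x})$. Assuming $W_{\vec x}$ is approximately $(p,o(1))$-locally dense for typical $\vec x$, the KNRS property of $H_2$ gives $t(H_2,W_{\vec x}) \ge p^{e(H_2)} - o(1)$. Jensen's inequality (convexity of $x\mapsto x^{v(H_2)}$) then yields
\[
    \int P_{\vec x}^{v(H_2)}\,d\vec x \;\ge\; t(H_1,W)^{v(H_2)} \;\ge\; p^{v(H_2)e(H_1)} - o(1),
\]
so multiplying the bounds gives $t(H,W) \ge p^{e(H)} - \varepsilon$, as desired.

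The hard part is the intermediate claim that, for typical $\vec x$, the reweighted host $W_{\vec x}$ remains locally dense. This transfer is not automatic: $\mu_{\vec x}$ could a priori concentrate on sparse regions of $W$, and in general ``weighted local density'' is strictly stronger than the standard notion. The remedy should exploit the special structure of $\mu_{\vec x}$: its density $g_{\vec x}/P_{\vec x}$ measures how readily each vertex completes an $H_1$-copy anchored at $\vec x$, so vertices of high $\mu_{\vec x}$-mass automatically come from dense ``$H_1$-friendly'' regions of $W$. Making this quantitative is precisely where the connection to semidefinite optimization and copositive matrices flagged in the abstract is expected to enter. A likely technical refinement is to split the $\vec x$-integration into ``typical'' and ``atypical'' parts, absorbing the atypical contribution via a direct convexity bound on $P_{\vec x}$ so that the delicate local-density transfer is only needed on the typical part.
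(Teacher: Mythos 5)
Your overall architecture matches the paper's: decompose $t(H_1\ltimes_I^a H_2,W)$ by first integrating over the copies of $J=V(H_1)\setminus I$ to get a weight function $g_{\vec x}$ on the $a$-layer, apply the KNRS property of $H_2$ to the reweighted inner integral, then apply the KNRS property of $H_1$ together with Jensen's inequality (convexity of $x\mapsto x^{v(H_2)}$) on the outer integral. This is exactly the proof in the paper, modulo notation.

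However, you leave a gap at precisely the step the paper handles cleanly, and your intuition about why it should be hard is wrong. You write that the transfer of local density to the reweighted graphon ``is not automatic'' because $\mu_{\vec x}$ ``could a priori concentrate on sparse regions of $W$,'' and that ``weighted local density is strictly stronger than the standard notion.'' In fact the opposite is true: local density is \emph{invariant} under passing from $\mu$ to any absolutely continuous reweighting $\nu_w$ with bounded density. This is \cref{lem:change-measure}, and it is an immediate consequence of the graphon form of Reiher's lemma (\cref{lem:reiher graphon}): $p$-local density is equivalent to $\iint f W f \geq p\norm f_1^2$ for \emph{all} bounded nonnegative $f$, so the condition for $\nu_w$ is just the original condition applied to the test function $wf$. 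There is no possibility of $\mu_{\vec x}$ concentrating on a sparse region, because local density rules out the existence of such a region in a quantitatively weighted sense. Consequently, you do not need a typical/atypical split at all: \cref{lem:count-with-weights} gives $\int_{\Omega^{V(H_2)}}\prod_v w(x_v)\prod_{uv}W(x_u,x_v) \ge \norm w_1^{v(H_2)}p^{e(H_2)}$ for \emph{every} bounded nonnegative $w$, so the inner bound holds pointwise for every $\vec x$, and the rest of your argument goes through exactly as written. The ``connection to semidefinite optimization'' you anticipate entering here is not used in this theorem; in the paper that machinery appears only in the forcing and regular-KNRS results.
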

This theorem already captures many of the known partial results about \cref{conj:KNRS}. For example, if $H_1$ is the star with $t$ leaves\footnote{An application of Jensen's inequality shows that all stars are Sidorenko, hence KNRS.}, $I$ is the set of its leaves and $a$ is the center, then $H_1 \ltimes_I^a H_2$ is obtained from $H_2$ by adding $t$ vertices connected to all vertices in $H_2$; thus repeated applications of \cref{thm:gluing preserves KNRS} imply that all complete multipartite graphs are KNRS. Similarly, applying it with $H_1 = P_\ell$ and $H_2=K_2$, where $a$ and $b$ are the two endpoints of the path\footnote{Blakley and Roy \cite{MR0184950} proved (in a different language) that all paths satisfy Sidorenko's conjecture, and hence are KNRS.} and $I=\{b\}$, we obtain that the odd cycle $C_{2\ell+1}$ is KNRS, recovering the result of Reiher \cite{MR3171777}. By adding an apex vertex (namely applying the theorem with $H_1=K_2$), we find that all wheels are KNRS. \cref{thm:gluing preserves KNRS} also implies that more complex graphs, such as the ones depicted in \cref{fig:products}, are KNRS.

The techniques used to prove \cref{thm:gluing preserves KNRS} are based on those developed by Reiher \cite{MR3171777} to prove that odd cycles are KNRS. However, we recast the entire problem, including a key lemma of Reiher's, in the more analytic setting of graphons, which we believe makes the proofs simpler and more transparent. Moreover, the true power of this perspective is even more pronounced in our subsequent results, which demonstrate surprising connections between \cref{conj:KNRS} and seemingly unrelated topics such as semidefinite optimization. To state our next main result, we make the following definition.
\begin{definition}
    A graph $H$ is called \emph{KNRS-forcing} if, for every $p, \alpha \in (0,1)$, there exist $\delta,\varepsilon>0$ such that the following holds. For every $(p, \delta)$-locally dense graph $G$, it holds that $t(H,G) > p^{e(H)} + \varepsilon$, unless $G$ is $(p,\alpha)$-quasirandom.
\end{definition}
Thus, a graph is KNRS-forcing if it is KNRS, and, moreover, the only locally dense graphs which asymptotically minimize $t(H,G)$ are themselves quasirandom. Just as Sidorenko's conjecture is strengthened by the forcing conjecture, we pose the following strengthening of \cref{conj:KNRS}.
\begin{conjecture}\label{conj:KNRS forcing}
    If $H$ is not a forest, then $H$ is KNRS-forcing.
\end{conjecture}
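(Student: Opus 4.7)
The natural setting is the graphon framework, where by a standard compactness argument in the cut metric the conjecture reduces to the following graphon statement: if $W \colon [0,1]^2 \to [0,1]$ is a symmetric measurable function satisfying $\int_{S \times S} W \geq p \ab{S}^2$ for every measurable $S \subseteq [0,1]$, and $t(H,W) = p^{e(H)}$, then $W \equiv p$ almost everywhere. My plan combines two orthogonal ingredients: base cases handled by direct analysis, and closure under the gluing operation via a strengthening of \cref{thm:gluing preserves KNRS}.

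The first step is to upgrade \cref{thm:gluing preserves KNRS} to the forcing setting: namely, if $H_1$ is KNRS and $H_2$ is KNRS-forcing, then $H_1 \ltimes_I^a H_2$ should also be KNRS-forcing. The proof of the gluing theorem should produce an analytic identity expressing $t(H_1 \ltimes_I^a H_2, W) - p^{e(H_1 \ltimes_I^a H_2)}$ as a sum of nonnegative terms, one of which controls the $L^2$-discrepancy of $W$ from the constant graphon. Careful tracking of when each term vanishes should show that equality forces constancy, provided the ``driving'' factor $H_2$ is itself forcing. Next, I would establish base cases: for cliques $K_t$, a stability version of the inductive Cauchy--Schwarz proof (equality in Cauchy--Schwarz forces the relevant integrand to be constant, hence the graphon degree function is constant, hence $W \equiv p$ by local density); for odd cycles $C_{2k+1}$, a graphon-analytic reformulation of Reiher's argument producing the same degree-regularity conclusion; and in both cases a spectral argument to promote ``degree-regular plus locally dense'' to ``quasirandom''. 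Combined with the gluing step, this would already give the conjecture for a large class of non-bipartite $H$, including all graphs built from cliques, odd cycles, paths, and stars by iterated gluing.

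The main obstacle is the bipartite non-forest case. If $H$ is bipartite and contains a cycle, the natural path is to invoke (some form of) the forcing conjecture for $H$; but the local-density hypothesis, applied at scale $\delta = 1$, yields only the bare bound $t(K_2, W) \geq p$, which does not constrain $W$ more tightly than the forcing conjecture's own hypothesis. Consequently, I expect the full conjecture to essentially contain the forcing conjecture as a subcase, and a realistic partial outcome is to resolve the conjecture unconditionally for non-bipartite $H$ (using odd cycles and the gluing machinery) and conditionally on the forcing conjecture in the bipartite case. A secondary, more technical obstacle is the gluing step itself: the proof of \cref{thm:gluing preserves KNRS} may use inequalities with large equality cases, so identifying the correct stability-preserving reformulation---likely through the semidefinite/copositive lens hinted at in the abstract---will be where the hard work lies.
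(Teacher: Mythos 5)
This is an open conjecture in the paper, not a theorem---the authors explicitly say they cannot resolve \cref{conj:KNRS forcing} in general, and only establish partial results (\cref{thm:forcing intro} and \cref{thm:main gluing}): cliques, cycles and wheels are KNRS-forcing, and certain gluings of these are too. Your broad two-part strategy (base cases plus closure under gluing) parallels what the paper accomplishes, and your observation that the bipartite non-forest case would essentially subsume the forcing conjecture is a correct diagnosis of why the general statement is out of reach.

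However, the specific gluing strengthening you propose is a genuine gap. You claim that if $H_1$ is KNRS and $H_2$ is KNRS-forcing, then $H_1 \ltimes_I^a H_2$ is KNRS-forcing. The paper's \cref{thm:main gluing} requires either (i) $H_1$ itself to be KNRS-forcing, or (ii) $H_2$ density-forcing \emph{and} $H_1 \ltimes_I^a K_2$ already KNRS-forcing. The extra hypothesis in (ii) is not cosmetic: in the equality-case analysis, one passes to a reweighted probability measure $\nu_{z_1,\dots,z_k}$ with density proportional to the number of rooted copies of $H_1$; applying the forcing property of $H_2$ with respect to $\nu$ only pins down $W$ on the support of the reweighting, which need not have full $\mu$-measure. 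The authors resolve this by translating the conclusion into the statement $t(H_1\ltimes_I^a K_2, W)=p^{e(H_1\ltimes_I^a K_2)}$ and then invoking that this smaller graph is KNRS-forcing. Your formulation omits exactly this step, and without it the argument does not close. Your base cases also diverge from the paper's route: you propose a direct Cauchy--Schwarz stability argument for cliques, whereas the paper proves forcing only for cycles directly---via a stability version of path-Sidorenko to force $p$-regularity, then \cref{cor:copositive-regular} and the spectral formula $t(C_k,W)=\sum_i\lambda_i^k$---and deduces cliques from $K_3$ by the gluing theorem using $K_k\cong K_2\ltimes_{\{x\}}^y K_{k-1}$ and case (ii). Finally, be aware that the paper's gluing proof is not a clean sum-of-nonnegative-terms identity as you envision, but a chain of inequalities (Jensen, local density, KNRS of $H_1$) whose equality cases must be unwound individually, which is where most of the technical work lies.
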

While we are not able to resolve \cref{conj:KNRS forcing} in general, we are able to prove it for certain natural graph families.
\begin{theorem}\label{thm:forcing intro}
    All cliques, cycles, and wheels are KNRS-forcing. Also, if $H_1$ is KNRS-forcing, then so is $H_1 \ltimes_I^a H_2$ for all independent sets $I \subseteq V(H_1),$ all $a \in V(H_1) \setminus I$, and all KNRS graphs $H_2$.
\end{theorem}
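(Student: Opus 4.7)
The plan is to prove the theorem in two parts: a gluing closure that propagates KNRS-forcing through the $\ltimes$-operation, and direct proofs for the base families of cliques, cycles, and wheels.

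For the gluing closure, suppose $H_1$ is KNRS-forcing and $H_2$ is KNRS, and set $H := H_1 \ltimes_I^a H_2$. Adopting the graphon framework that underlies the proof of \cref{thm:gluing preserves KNRS}, the quantity $t(H, W)$ can be written as an iterated integral: for each $\vec{x}_I \in [0,1]^{|I|}$, define the \emph{link measure} $\mu_{\vec{x}_I}$ on $[0,1]$ by integrating out all non-$a$, non-$I$ vertices of a single copy of $H_1$. Then
$$
t(H, W) = \int t(H_2, W; \mu_{\vec{x}_I}) \, d\vec{x}_I,
$$
where $t(H_2, W; \mu)$ denotes the $H_2$-density in $W$ with vertex measure $\mu$; moreover $\int |\mu_{\vec{x}_I}| \, d\vec{x}_I = t(H_1, W)$. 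Applying KNRS for $H_2$ with measure $\mu_{\vec{x}_I}$ and then Jensen's inequality yields
$$
t(H, W) \geq p^{e(H_2)} \cdot t(H_1, W)^{v(H_2)}.
$$
If $W$ is $(p, \delta)$-locally dense but not $(p, \alpha)$-quasirandom, the KNRS-forcing of $H_1$ gives $t(H_1, W) \geq p^{e(H_1)} + \eta$ for some $\eta = \eta(\alpha, p) > 0$, and then $(p^{e(H_1)} + \eta)^{v(H_2)} \geq p^{e(H_1) v(H_2)} + v(H_2)\eta \cdot p^{e(H_1)(v(H_2)-1)}$ gives the desired strict improvement $t(H, W) \geq p^{e(H)} + \eta'$ for a positive $\eta'$.

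For the base cases, I would handle each family directly. Cliques are proved by induction on $r \geq 3$, using the reduction
$$
t(K_r, W) = \int d_W(x)^{r-1} \cdot t(K_{r-1}, W; \nu^x) \, dx,
$$
where $\nu^x$ is the link probability measure of $x$. Combining the inductive hypothesis with Jensen on the degree function yields the quasirandom conclusion: non-quasirandomness of $W$ forces strictness either in the inductive step (via non-quasirandomness of $(W, \nu^x)$ for a positive-measure set of $x$) or in Jensen (via non-constant $d_W$). The base case $r = 3$ is handled by the same two-step chain using the weighted local density in place of the inductive hypothesis. Cycles split into even and odd. For even $C_{2k}$, use the spectral identity $t(C_{2k}, W) = \sum_i \lambda_i^{2k}$ together with $\lambda_1 \geq p$ (implied by local density): if $t(C_{2k}, W) \leq p^{2k} + \varepsilon$, the subsequent eigenvalues are forced to be small, hence $t(C_4, W) \leq p^4 + O(\varepsilon^{1/k})$, and Chung--Graham--Wilson gives quasirandomness. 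For odd cycles, I would adapt Reiher's proof by quantitatively tracking the defects in each Cauchy--Schwarz step of his reduction to weighted triangle counts, showing that non-quasirandomness forces a positive defect. Wheels $W_k = K_1 + C_k$ are then handled via the conditioning identity
$$
t(W_k, W) = \int d_W(h)^k \cdot t(C_k, W; \nu^h) \, dh,
$$
obtained by integrating out the hub $h$, and applying KNRS-forcing of $C_k$ (just established) together with Jensen to extract strictness either through non-constant $d_W$ or through non-quasirandomness of $W$ reweighted by $\nu^h$.

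The main obstacle is the stability analysis for odd cycles: Reiher's multi-step Cauchy--Schwarz chain is originally non-quantitative, and extracting a uniform positive defect when $W$ is $\alpha$-far from quasirandom requires quantitative control at every step. A secondary difficulty is the weighted local density statement required in the clique and wheel arguments---that is, extending $(p, \delta)$-local density to link probability measures---which should follow from a level-set decomposition, but requires careful verification.
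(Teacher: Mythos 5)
The gluing closure in your proposal matches the paper's approach in Theorem~\ref{thm:main gluing}: one computes $t(H_1 \ltimes_I^a H_2, W)$ by first integrating out the $I$-coordinates, using Lemma~\ref{lem:count-with-weights} for $H_2$, then Jensen, and finally forcing of $H_1$. That part is fine. The base cases are where the proposal diverges, and there is a genuine gap in the odd cycle argument.

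The paper does not attempt any quantitative stability analysis. The crucial point you are missing is Lemma~\ref{lem:graphon equivalence}(b): by compactness, being KNRS-forcing is \emph{equivalent} to the purely qualitative statement that $t(H,W) = p^{e(H)}$ for a $p$-locally dense graphon $W$ forces $W=p$ a.e. This removes any need to track $\varepsilon$-defects or to show that ``$\alpha$-far from quasirandom implies $\eta(\alpha)$ surplus.'' With that reduction in hand, the paper's proof of Theorem~\ref{thm:cycle forcing} for odd $k=2\ell+1$ is short and entirely different from a quantified Reiher argument: from equality $t(C_k,W)=p^k$, a single chain using Reiher's graphon lemma plus the fact that $P_\ell$ is Sidorenko shows $t(P_\ell,W)=p^\ell$, hence $W$ is $p$-regular; then Corollary~\ref{cor:copositive-regular} gives that $W-p$ is positive semidefinite, so all eigenvalues of $W$ are nonnegative; finally $t(C_k,W)=\sum_i\lambda_i^k = p^k$ forces $\lambda_i=0$ for $i\geq 2$, i.e.\ $W=p$ a.e. Your proposed route --- quantitatively tracking defects through Reiher's Cauchy--Schwarz chain --- is precisely what the compactness framework is designed to avoid; you correctly identify it as ``the main obstacle'' but do not resolve it, and the paper's spectral/PSD idea, which is the actual resolution, is absent from the proposal.

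For cliques and wheels the paper also takes a different route: rather than your direct induction on $K_r$ via link measures (which would require the technical equality analysis you only sketch, since ``$W=p$ $\nu^x\times\nu^x$-a.e.\ for a.e.\ $x$'' together with constant degree does not instantly yield $W=p$ without further work), it builds $K_r = K_2\ltimes_{\{x\}}^y K_{r-1}$ and $W_k = K_2 \ltimes_{\{x\}}^y C_k$ and invokes the \emph{stronger} form of the gluing theorem, which says $H_1\ltimes_I^a H_2$ is KNRS-forcing also when $H_2$ is density-forcing and $H_1\ltimes_I^a K_2$ is KNRS-forcing (needed since $K_2$ itself is not KNRS-forcing). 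Your direct arguments for cliques and wheels are plausible alternatives but are left unfinished; the essential missing piece in the whole proposal, however, is the odd-cycle base case, for which you offer only a plan that you acknowledge is hard, whereas the paper gives a clean qualitative argument.
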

In fact, we are able to prove that $H_1 \ltimes_I^a H_2$ is KNRS-forcing even in certain instances when $H_1$ is not; we defer the precise statement to \cref{thm:main gluing}. We remark that our proof that odd cycles are KNRS-forcing is closely related to the fact that the Schatten norms are strictly convex on the positive-semidefinite cone. In particular, the KNRS-forcing condition---which roughly says that $t(H,{}\cdot{})$ has a unique global minimum---can be deduced in this instance from the fact that a strictly convex function has a unique global minimum. We discuss this in greater detail later, but we believe that such connections are an important feature of our techniques.

Finally, we also pose a weakening of \cref{conj:KNRS} which we can, perhaps surprisingly, prove for a larger class of graphs. Let us say that an $n$-vertex graph $G$ is \emph{$(p,\delta)$-nearly regular} if all but at most $\delta n$ vertices of $G$ have degree in the interval $[(p-\delta)n,(p+\delta)n]$.

\begin{conjecture} \label{conj:regular-KNRS}
    For every graph $H$ and all $p, \varepsilon \in (0, 1),$ there exists some $\delta > 0$ such that the following holds. If $G$ is a $(p, \delta)$-locally dense $(p, \delta)$-nearly regular graph, then $t(H, G) \ge p^{e(H)} - \varepsilon.$
\end{conjecture}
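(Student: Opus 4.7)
The plan is to adopt the graphon perspective and exploit the near-regularity hypothesis via a leaf-elimination argument, reducing the problem to an analytic inequality concerning the $2$-cores of edge subsets of $H$. Let $W$ be a graphon representation of $G$ and set $U := W - p$. The two hypotheses translate cleanly: local density becomes the ``copositive-like'' condition
\[
\int_S \int_S U(x,y) \dd x \dd y \;\geq\; 0
\]
for every measurable $S \subseteq [0,1]$ with $|S| \geq \delta$, while near-regularity becomes $d_U(x) := \int U(x,y) \dd y \approx 0$ for almost every $x$. The goal is $t(H, p + U) \geq p^{e(H)} - \varepsilon$.

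Expanding the product over the edges of $H$ gives
\[
t(H, p + U) \;=\; p^{e(H)} \;+\; \sum_{\varnothing \neq F \subseteq E(H)} p^{\,e(H) - |F|}\, I(F), \qquad I(F) := \int \prod_{ij \in F} U(x_i, x_j) \prod_{v \in V(H)} \dd x_v.
\]
The first step is to invoke near-regularity to show that $I(F) = o(1)$ whenever $(V(H), F)$ has a vertex of degree $1$: integrating such a leaf produces a factor $d_U$ at its neighbour, which is uniformly small off a set of measure $O(\delta)$. Iterating leaf-removal, only subgraphs $F$ with non-trivial $2$-core can contribute non-negligibly, and the problem reduces to lower-bounding the resulting sum by $-\varepsilon$.

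The second step is to control these $2$-core contributions using the copositivity of $U$. For $F$ a disjoint union of cycles, $I(F)$ factorises into cycle integrals that can be handled by Cauchy--Schwarz arguments in the spirit of Reiher's odd-cycle proof. For $2$-cores containing a vertex of degree $\geq 3$, one would aim to produce a semidefinite-programming dual certificate based on the copositivity of $U$; this is precisely the point at which the SDP and copositive-matrix techniques emphasised in the introduction should enter. A natural combinatorial scaffold is induction on the cyclomatic complexity of $H$, combined with the gluing framework of \cref{thm:gluing preserves KNRS} (or an analogue adapted to the nearly-regular regime) to reduce composed graphs to smaller pieces.

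The main obstacle is this second step. A copositive kernel $U$ need not be positive semidefinite and individual integrals $I(F)$ can indeed be negative, so the desired bound must emerge from delicate cancellations across different $F$. I would expect that for each possible $2$-core $F^*$, the $p$-weighted sum of $I(F)$ over all $F$ with that core collapses into a non-negative quantity, but identifying the right grouping and proving the resulting positivity uniformly in $H$ appears to require substantially new input, likely arising from the copositive-matrix viewpoint.
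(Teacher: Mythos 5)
The statement you are attempting to prove is \cref{conj:regular-KNRS}, which the paper poses as an \emph{open conjecture} and does \emph{not} prove in full generality. The paper only establishes it for restricted families of graphs (\cref{thm:regular KNRS anthology}: subdivisions, generalized theta graphs, graphs obtained by gluing forests, the specific graph $H_0$). So no complete proof exists to compare against; what can be compared is your plan against the partial-results machinery the paper actually develops in \cref{sec:regular-KNRS}.

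Your setup --- pass to graphons, write $U = W - p$, expand $t(H, p+U) = p^{e(H)} + \sum_{\varnothing \neq F \subseteq E(H)} p^{e(H)-|F|} I(F)$, and eliminate terms where $F$ has a degree-one vertex using $d_U \approx 0$ --- is exactly the paper's expansion \eqref{eq:psd-expansion} together with \cref{lem:degree-one-psd}. This much is on target. However, you then write that ``a copositive kernel $U$ need not be positive semidefinite,'' and you set out to extract the needed positivity from copositivity alone via some SDP duality. This misses the single key structural observation that powers the whole section: \cref{lem:positive in nullspace} and \cref{cor:copositive-regular} show that if $W$ is $p$-locally dense \emph{and} $p$-regular, then $W - p$ is genuinely positive semidefinite, not merely copositive. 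Regularity puts the all-ones function in the kernel of $T_{W-p}$, and copositivity plus a nonvanishing element of the nullspace forces full positive semidefiniteness. In the exactly-regular (graphon) limit this is clean; in the $(p,\delta)$-nearly-regular finite setting one has to do the bookkeeping you sketch, but the target should be PSD-ness of $W-p$, not copositivity. This is what turns an intractable copositive-cone problem into a PSD-cone problem where operator powers, tensor products, and Hadamard products (\cref{lem:walks-psd}, \cref{lem:hadamard PSD}) become available, which is how the paper handles the $2$-cores in its special cases (e.g.\ \cref{lem:theta-graphs-psd} writes the two-rooted theta-graph density as a Hadamard product of operator powers of a PSD kernel).

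You should also be aware that even after upgrading to PSD, the term-by-term nonnegativity you hope for --- ``for each possible $2$-core $F^*$, the $p$-weighted sum of $I(F)$ over all $F$ with that core collapses into a non-negative quantity'' --- cannot hold in the strong form ``every spanning subgraph is PSD-nonnegative.'' \cref{prop:psd-nonnegative-counterexample} exhibits a graph $H$ and a $0$-regular positive semidefinite kernel $W$ with $t(H, W) < 0$. So for general $H$ the positive contribution must come from cancellations across different subgraphs $F$ sharing the same $2$-core, and the paper does not know how to organize these cancellations in general. Your intuition that something subtler than term-by-term positivity is needed is correct, but the obstruction is already present at the PSD level, and resolving it is precisely what remains open.
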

We say $H$ is \emph{regular-KNRS} if \cref{conj:regular-KNRS} holds for $H$. In other words, to check that $H$ is regular-KNRS, it suffices to prove that $t(H,G) \geq p^{e(H)}-o(1)$ over all nearly regular locally dense graphs $G$. It is known that in the setting of Sidorenko's conjecture, adding such a restriction is immaterial: $H$ is Sidorenko if and only if $t(H,G)\geq p^{e(H)}$ for all $(p,\delta)$-nearly regular graphs $G$ with $t(K_2,G)=p$. A simple proof of a slightly weaker statement can be found in \cite[Lemma 3.3]{MR3456171}, and stronger statements are proved in \cite{CoRa,Szegedy}. Because of this, it is natural to expect that $H$ is KNRS if and only if it is regular-KNRS, but we have been unable to prove this. However, since being regular-KNRS is a weaker condition, we were able to verify it for many more graphs.
\begin{theorem}\label{thm:regular KNRS anthology}
    The following classes of graphs are regular-KNRS.
    \begin{itemize}
        \item The $\ell$-subdivision of a regular-KNRS graph, for any $\ell \geq 1$. In particular, all balanced subdivisions of cliques are regular-KNRS.
        \item Any graph obtained from a regular-KNRS graph by gluing forests to its vertices.
        \item Any generalized theta graph, consisting of an arbitrary number internally vertex-disjoint paths, of arbitrary lengths, between two fixed vertices.
        \item The graph 
        $H=\tikz[scale=.5, baseline=4pt, every node/.style={circle, draw, fill=black, inner sep=.8pt}]{\draw (0,0) node (a) {} -- (1,0) node (b) {} -- (1,1) node (c) {} -- (0,1) node (d) {} -- cycle; \draw (a) -- (-.5,.5) node {} -- (d); \draw (b) -- (1.5,.5) node {} -- (c);}$%
        , obtained by gluing a triangle to two opposite edges of $C_4$.
    \end{itemize}
\end{theorem}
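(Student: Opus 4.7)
The plan is to handle each of the four items separately within a common graphon framework, where the nearly-regular hypothesis becomes $\int W(x,\cdot) \equiv p$ a.e.; this degree-regularity is what lets us decouple contributions in ways unavailable under local density alone.

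For item (i), I use the walk-graphon identity $t(H^{(\ell)}, W) = t(H, W^{\circ(\ell+1)})$, where $W^{\circ k}$ denotes the $k$-fold graphon convolution. It suffices to verify that $W^{\circ(\ell+1)}$ is nearly-regular and $(p^{\ell+1}, \delta')$-locally dense, for then regular-KNRS applied to $H$ gives the conclusion. Near-regularity is immediate. For local density with $\ell+1 = 2m$ even, Cauchy--Schwarz yields $\int_{S \times S} W^{\circ 2m} = \int\bigl(\int_S W^{\circ m}(x, z)\, dx\bigr)^2 dz \geq (p^m |S|)^2$. For $\ell+1 = 2m+1$ odd, the analogous integral equals $\inner{h}{T_W h}$ where $h(z) := \int_S W^{\circ m}(x, z)\, dx$ satisfies $\int h = p^m |S|$; the bound then follows from a spectral argument using that local density of $W$ quantitatively prevents the spectrum of $T_W$ from reaching $-p$. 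Balanced subdivisions of cliques follow on taking $H = K_n$.

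For item (ii), the key lemma is a \emph{tree-extension principle}: for any tree $T$ rooted at a vertex $v$ and any nearly-regular graphon $W$, the rooted density $\rho_T(x) := t_v(T, (W, x))$ satisfies $\rho_T(x) = p^{e(T)} + o(1)$ for a.e.\ $x$, by induction on leaves using $\int W(x, \cdot) = p$. Then if $H'$ is built from $H$ by gluing trees $T_1, \ldots, T_s$ at vertices $v_1, \ldots, v_s \in V(H)$, we obtain $t(H', W) = \E_{f : V(H) \to [0,1]}\bigl[\1\{f \text{ hom}\} \prod_i \rho_{T_i}(f(v_i))\bigr] \approx p^{\sum_i e(T_i)}\, t(H, W) \geq p^{e(H')} - o(1)$, using regular-KNRS for $H$.

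For item (iii), the corresponding identity is $t(\Theta_{\ell_1, \ldots, \ell_k}, W) = \int \prod_{i=1}^k W^{\circ \ell_i}(x, y)\, dx\, dy$. Setting $P_\ell := p^{-\ell} W^{\circ \ell}$ (a symmetric stochastic kernel satisfying $P_\ell P_{\ell'} = P_{\ell + \ell'}$) and using the spectral decomposition $P_\ell = \sum_j \mu_j^\ell\, \phi_j \otimes \phi_j$ with $\mu_0 = 1$, $\phi_0 = \1$, and $|\mu_j| \leq 1$, the integral expands as $p^{\sum \ell_i} \sum_{(j_1, \ldots, j_k)} \prod_i \mu_{j_i}^{\ell_i}\, \bigl(\int \prod_i \phi_{j_i}\bigr)^2$, with trivial term contributing exactly $p^{\sum \ell_i}$. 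For bipartite $\Theta$ (all $\ell_i$ of equal parity) the bound follows from known Sidorenko-type results. \emph{The non-bipartite case is the main obstacle of the theorem}: some $\mu_{j_i}^{\ell_i}$'s can be negative, and controlling the off-principal contributions requires extracting a quantitative spectral gap from local density, in the spirit of Reiher's odd-cycle argument combined with a Perron--Frobenius-style exclusion of approximately bipartite structure. The odd case of item (i) relies on the same ingredient.

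For item (iv), label $C_4 = abcd$ with apex vertices $e$ on edge $ab$ and $f$ on edge $cd$; integrating out $e, f$ gives $t(H, W) = \int W(a,b)\sigma(a,b) W(b,c) W(c,d) \sigma(c,d) W(d,a)\, da\, db\, dc\, dd$ where $\sigma(x,y) := \int W(x,z) W(z,y)\, dz$. Setting $\nu(x,y) := W(x,y) \sigma(x,y)$ and regrouping as a $2$-colored $C_4$, this equals $\operatorname{tr}((\nu W)^2)$. Now $\int \nu = t(K_3, W) \geq p^3$ since $K_3$ is KNRS, and $\int W = p$; a Sidorenko-for-$C_4$-style argument on this $2$-colored setting, with the non-PSD part of $W$ controlled via near-regularity, then yields $\operatorname{tr}((\nu W)^2) \geq (\int \nu)^2 (\int W)^2 \geq p^8 - o(1)$.
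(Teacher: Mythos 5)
The central idea of the paper's proof is missing from your proposal, and it is exactly the ingredient that dissolves the obstacles you flag. For a $p$-regular $p$-locally dense graphon $W$, the kernel $W_0 := W - p$ is not merely copositive but \emph{positive semidefinite} (\cref{cor:copositive-regular}): $W_0$ is copositive and has $\1$ in its kernel, so adding a large multiple of $\1$ to any test function shows $\inner{f}{T_{W_0}f}\ge 0$ for all bounded $f$. You never state or use this, and as a result you repeatedly reach for a ``quantitative spectral gap from local density'' or a ``Perron--Frobenius-style exclusion of approximately bipartite structure''; local density alone gives only copositivity, which says nothing about the spectrum of $T_W$, so these steps as written do not go through. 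Once one knows $W_0$ is PSD, the whole theorem follows from the simple framework of the paper: expand $t(H,W)=\sum_{H'\subseteq H} p^{e(H)-e(H')}t(H',W_0)$ over spanning subgraphs $H'$, and show each $t(H',W_0)\ge 0$ using that PSD is preserved under operator powers, tensor products, and Hadamard products, plus the observation that a degree-one vertex forces $t(H',W_0)=0$ when $W_0$ is $0$-regular.

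Concretely, for item (i) your even case is fine but your odd case is a gap; the paper instead notes that $0$-regularity kills the cross terms, so $W^{\circ(\ell+1)} = p^{\ell+1} + W_0^{\circ(\ell+1)}$, and $W_0^{\circ(\ell+1)}$ is PSD (hence copositive) and $0$-regular, handling both parities at once with no spectral analysis. For item (iii) you explicitly admit the non-bipartite case is unresolved; the paper proves it by writing $t_{a,b}(\Theta,W_0) = \bigodot_i W_0^{\circ(s_i+1)}$, a Hadamard product of PSD kernels, hence PSD, hence $t(\Theta,W_0)\ge 0$ --- and checks that every spanning subgraph of a generalized $\Theta$-graph is again a $\Theta$-graph (plus isolated vertices) or has a degree-one vertex. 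For item (iv), your bound $\operatorname{tr}((\nu W)^2)\ge(\int\nu)^2(\int W)^2$ is asserted without proof and is not a standard Sidorenko step (note $\nu\circ W$ need not be symmetric since $\nu$ and $W$ need not commute, so the Cauchy--Schwarz regrouping you have in mind does not directly apply); the paper instead checks that every spanning subgraph of $H_0$ is PSD-nonnegative by a short case analysis using Hadamard products and \cref{lem:glue-psd}. Your item (ii) is essentially correct, though for graphons it is exact rather than $+o(1)$: a $p$-regular $W$ gives rooted tree density exactly $p^{e(T)}$, so $t(H',W)=p^{e(F)}t(H,W)$.
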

In particular, this result implies that all unicyclic graphs and all graphs obtained from a cycle by adding a chord are regular-KNRS, so the examples above include the graphs which Lee \cite{MR4201799} proved are KNRS, as well as generalizations of them; however, in contrast to Lee, we are only able to prove the weaker condition of regular-KNRS. Additionally, $H_0$ was raised by Lee \cite{MR4201799} as the smallest example of a graph which is not known to be KNRS; again our techniques are not able to prove this, but are able to show the weaker condition of being regular-KNRS.

In the analytic perspective we take, the regular-KNRS problem turns out to be substantially more well-behaved. At a high level, the reason is that locally dense graphs correspond to \emph{copositive matrices}, a class of matrices which is very poorly behaved. However, if one imposes the almost regularity condition, one instead obtains a positive semidefinite matrix, and thus we are able to use the rich theory of PSD matrices to prove results such as \cref{thm:regular KNRS anthology}. In particular, certain operations which preserve the PSD cone, such as tensor product, Hadamard product, and matrix powers, correspond to useful operations on graphs. Because of this, we believe that \cref{conj:regular-KNRS} is a natural setting in which to work and to obtain partial results, with the hopes of ultimately using similar techniques to resolve the full \cref{conj:KNRS}.

The rest of this paper is organized as follows. In \cref{sec:prelim}, we collect a number of general lemmas that we will use in our proofs, mostly related to the translation of these problems to the language of graphons. Proofs of many of these lemmas, as well as a more in-depth discussion of graph limits, are deferred to \cref{sec:graphon appendix}. The proofs of \cref{thm:gluing preserves KNRS,thm:forcing intro} are given in \cref{sec:gluing}. We prove results about regular-KNRS graphs, including \cref{thm:regular KNRS anthology}, in \cref{sec:regular-KNRS}. Finally, in \cref{sec:negative}, we collect a few examples which rule out natural avenues towards proving \cref{conj:KNRS}.

\section{Preliminaries}\label{sec:prelim}
In this section, we collect a few preliminary definitions and results that we will need in the proofs of our main theorems. 
We begin by introducing a weaker forcing notion, which will be useful in certain of our proofs, and which is interesting in its own right. 
\begin{definition}
    A graph $H$ is called \emph{density-forcing} if, for every $p,\alpha \in (0,1)$, there exist $\delta,\varepsilon>0$ such that the following holds. If $G$ is a $(p,\delta)$-locally dense graph with at least $(p+\alpha){v(G)}^2/2$ edges, then $t(H, G) > p^{e(H)} + \varepsilon$.
\end{definition}
It is not hard to show that if $H$ is density-forcing, it is also KNRS. Thus, loosely speaking, a graph $H$ is density-forcing if it is KNRS and the asymptotic minimizers all have edge density $p$; this is weaker than being KNRS-forcing, which says that the asymptotic minimizers are $p$-quasirandom.

\subsection{Graphons}
For the rest of the paper, we will work in the setting of graphons, rather than of graphs, as this makes a number of analytic arguments substantially simpler. In this section we simply recall the basic definitions, and state \cref{lem:graphon equivalence}, which restates all of the questions we are studying in the language of graphons. All of the results we need about graphons are essentially standard, but we provide detailed proofs in \cref{sec:graphon appendix}. For a more thorough introduction to the theory of graph limits, see \cite{MR3012035} or \cite[Chapter 4]{MR4603631}.
\begin{definition}
    Let $(\Omega, \Sigma, \mu)$ be an atomless standard probability space. A \emph{kernel} on $\Omega$ is a bounded symmetric measurable function $W:\Omega \times \Omega \to \mathbb{R}$. A kernel $W \colon \Omega \times \Omega \to [0,1]$ is called a \emph{graphon}. 

    As is standard in functional analysis, we will view two kernels that agree almost everywhere as the same object.
\end{definition}
Usually when working with graphons (or kernels), one just sets $\Omega$ to be the interval $[0,1]$ endowed with Lebesgue measure, which is without loss of generality\footnote{In fact, a graphon on an arbitrary probability space can be converted into one on $[0,1]$ with essentially no loss of generality; see \cite[Chapter 13]{MR3012035}.} as every atomless standard probability space is isomorphic to $[0,1]$. However, in some places in this paper, it will be convenient to work with more general probability spaces. When there is no confusion, we will use $\ab U$ to denote the measure of a measurable subset $U \subseteq \Omega$, and write $\int_\Omega f \dd x$ rather than $\int_\Omega f \dd \mu(x)$ for the integral of a function $f:\Omega \to \R$ with respect to $\mu$.

We now define some quantities associated to graphons.
The first is a graphon analogue of the degree sequence of a graph.
\begin{definition}
    If $W$ is a graphon, its \emph{degree function} is the function $d_W:\Omega \to [0,1]$ defined by 
    \[
    d_W(x) \coloneqq \int_\Omega W(x,y) \dd y.
    \]
    $W$ is called \emph{$p$-regular} if $d_W =p$ a.e.
\end{definition}

The second is the notion of homomorphism density, which extends naturally from graphs to kernels.
\begin{definition}
    Given a graph $H$, its homomorphism density in a kernel $W$ is given by
    \begin{equation} \label{eq:hom-density}
    t_\mu(H,W) \coloneqq \int_{\Omega^{V(H)}} \prod_{uv \in E(H)} W(x_u, x_v) \cdot \prod_{v \in V(H)} \dd \mu(x_v).
    \end{equation}
    When the measure $\mu$ is clear from the context, we will often omit the subscript.
    Additionally, for distinct $v_1, \dots, v_k \in V(H)$ and any $x_1, \dots, x_k \in \Omega,$ we write
    \[ t(H, W \mid x_{v_1} = x_1, \dots, x_{v_k} = x_k) \coloneqq \int_{\Omega^{V(H) \setminus \{v_1, \dots, v_k\}}} \prod_{uv \in E(H)} W(x_u, x_v) \cdot \prod_{v \in V(H) \setminus \{v_1, \dots, v_k\}} \dd x_v, \]
    where on the right-hand side we plug in $x_i$ for every instance of $x_{v_i}$.
\end{definition}
To every graph $G$, one can associate a graphon $W_G$ (see \cref{def:W_G}) such that $t(H,W_G)=t(H,G)$ for any graph $H$, hence this really is a generalization of the homomorphism density of graphs (see e.g. \cite[Section 4.3]{MR4603631} for more details).

Similarly, the definition of a locally dense graph translates naturally to that of a locally dense graphon. Because graphons already capture the asymptotic nature of the problem, the parameter $\delta$ from \cref{def:locally-dense} does not appear in the following definition.
\begin{definition}\label{def:locally dense graphon}
    A graphon $W$ on $(\Omega,\Sigma,\mu)$ is called \emph{$p$-locally dense} if, for every $U \subseteq \Omega$ we have that
    \[
    \iint_{U \times U} W(x,y) \dd x\dd y \geq p \ab U^2.
    \]
\end{definition}
Locally dense graphons precisely capture the asymptotic theory of locally dense graphs (see \cref{lem:graphon LD} for a precise statement). In particular, the following result shows that all of the properties we are studying are equivalently captured by the behavior of homomorphism counts in locally dense graphons.

\begin{lemma}\label{lem:graphon equivalence}
    Let $H$ be a graph. 
    \begin{lemenum}
        \item $H$ is KNRS if and only if $t(H,W) \geq p^{e(H)}$ for every $p$ and every $p$-locally dense graphon $W$. \label{lemit:KNRS}
        \item $H$ is KNRS-forcing if and only if $t(H,W) > p^{e(H)}$ for every $p$ and every $p$-locally dense graphon $W$, unless $W=p$ a.e. \label{lemit:KNRS-forcing}
        \item $H$ is density-forcing if and only if $t(H,W) > p^{e(H)}$ for every $p$ and every $p$-locally dense graphon $W$ with $\iint_{\Omega \times \Omega} W > p$.\label{lemit:density-forcing}
    \end{lemenum}
\end{lemma}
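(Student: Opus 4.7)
The plan is to prove all three equivalences via the standard dictionary between finite graphs and graphons. To each $G$ one associates a step-function graphon $W_G$ with $t(H, W_G) = t(H, G)$, and the space of graphons is compact in the cut metric $\delta_\square$, on which $t(H, \cdot)$ is continuous. The forward direction (``graphon statement implies discrete statement'') of each equivalence will go by contradiction: from a sequence $G_n$ of alleged discrete counterexamples (so $(p, 1/n)$-locally dense but with $t(H, G_n)$ violating the claimed bound), compactness produces a subsequential limit $W_{G_{n_k}} \to W$, continuity forces $t(H, W)$ to violate the bound, and one verifies that $W$ satisfies the hypothesis of the graphon statement. The reverse direction (``discrete statement implies graphon statement'') will go by sampling: given $W$ satisfying the graphon hypothesis, set $G_n = \G(n, W)$, show that with high probability $G_n$ satisfies the discrete hypothesis, and conclude from $t(H, G_n) \to t(H, W)$ upon letting $\varepsilon \downarrow 0$.

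Part \ref{lemit:KNRS} is handled by this template directly. Parts \ref{lemit:KNRS-forcing} and \ref{lemit:density-forcing} each require one extra ingredient. For \ref{lemit:KNRS-forcing}, the Thomason--Chung--Graham--Wilson characterization gives that $G$ is $(p, o(1))$-quasirandom if and only if $W_G \to p$ in the cut metric; so the forward contradiction produces a non-constant $p$-locally dense limit $W$ from a non-quasirandom locally dense sequence, and in the reverse direction a non-constant $W$ produces samples that are bounded away from $(p,\alpha)$-quasirandom for some fixed $\alpha>0$. For \ref{lemit:density-forcing}, the edge-density hypothesis $e(G) \geq (p+\alpha){v(G)}^2/2$ is just $t(K_2, G) \geq p + \alpha$, which transfers to $\iint_{\Omega \times \Omega} W \geq p + \alpha$ by continuity of $t(K_2,\cdot)$, and conversely.

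The main technical obstacle, common to all three parts, is transferring the local-density hypothesis across the limit. Concretely, one must show that if $G_n$ is $(p, 1/n)$-locally dense and $W_{G_n} \to W$ in $\delta_\square$, then every measurable $U \subseteq \Omega$ with $\ab{U} > 0$ satisfies $\iint_{U \times U} W \geq p \ab{U}^2$. The plan is to approximate $U$ by vertex subsets $U_n \subseteq V(G_n)$ with $\ab{U_n}/v(G_n) \to \ab{U}$, so that eventually $\ab{U_n} \geq v(G_n)/n$ and the discrete local-density hypothesis applies to $U_n$, while simultaneously $2 e(G_n[U_n])/v(G_n)^2 \to \iint_{U \times U} W$. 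Both approximations are delivered by the fact that cut-metric convergence controls $\iint_{A \times B}(W_{G_n} - W) \dd x \dd y$ uniformly over measurable $A, B$ (after aligning via the optimal measure-preserving relabeling). The sampling direction requires a companion fact: that for any fixed $\delta > 0$, $\G(n, W)$ inherits $(p, \delta)$-local density with high probability, which follows from Chernoff plus a union bound over a sufficiently fine net of measurable subsets of $\Omega$. Once these two facts are established, all three equivalences fall out of the same compactness-and-sampling skeleton.
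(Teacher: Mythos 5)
Your overall skeleton --- compactness and subsequential limits in one direction, approximation of the graphon by finite graphs in the other, plus the Chung--Graham--Wilson characterization for part (b) and continuity of $t(K_2,\cdot)$ for part (c) --- matches the paper's approach, which routes both directions through an intermediate lemma (\cref{lem:graphon LD}) characterizing $p$-locally dense graphons as precisely the limits of $(p+o(1),o(1))$-locally dense graph sequences. Two of your steps, however, gloss over exactly the technically substantive points.

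In the compactness direction, you propose to transfer local density from $G_n$ to $W$ by approximating a measurable set $U$ by vertex subsets $U_n \subseteq V(G_n)$ with $\ab{U_n}/v(G_n) \to \ab U$ and $2e(G_n[U_n])/v(G_n)^2 \to \iint_{U\times U}W$. There is no deterministic choice of $U_n$ achieving both limits: the cells $I_1,\dots,I_{v(G_n)}$ defining $W_{G_n}$ need not align with $U$, and if $U$ meets every cell in exactly half its measure, no subset of cells will work. The paper instead keeps the \emph{fractional} weights $f_n(i)=\ab{U\cap I_i}/\ab{I_i}$ and invokes Reiher's discrete lemma (\cref{lem:reiher}), which is precisely the statement needed to handle $[0,1]$-valued weights. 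This is a genuine ingredient your sketch omits; randomly rounding $U$ would also work, but then you are re-deriving Reiher's lemma via concentration of a quadratic form, which is no shortcut.

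In the sampling direction, ``Chernoff plus a union bound over a sufficiently fine net of measurable subsets of $\Omega$'' does not by itself establish local density of $\G(n,W)$: the condition quantifies over all $2^n$ vertex subsets $S\subseteq[n]$, and for fixed $S$ the conditional expectation $\E[e(S)\mid x_1,\dots,x_n]=\sum_{i<j\in S}W(x_i,x_j)$ is itself a random quantity that must be lower-bounded. The honest chain is: cut-norm sampling lemma (to be cited or proved) $\Rightarrow$ the sampled step-function graphon is cut-close to $W$ whp $\Rightarrow$ all these conditional expectations are $\geq (p-o(1))\ab S^2/2$ for $\ab S\geq\delta n$ $\Rightarrow$ Chernoff and union bound over vertex subsets. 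The paper avoids sampling altogether: it cites the existence of a deterministic approximating sequence $G_n\to W$ (\cref{thm:graph limit}) and shows by a short cut-norm contradiction argument that any such sequence inherits $(p-o(1),o(1))$-local density. Your route can be made rigorous once the missing sampling lemma is supplied, but it is more circuitous than what the paper does.
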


\subsection{An infinitary version of Reiher's lemma}
Reiher \cite{MR3171777} proved a very important lemma, which has been used in many of the results on locally dense graphs;  we will also use it multiple times throughout this paper. Informally, it states that a locally dense graph also satisfies a ``fractional'' version of the locally dense property in \cref{def:locally-dense}. Formally, it states the following.
\begin{lemma}\label{lem:reiher}
    Let $G$ be a $(p,\delta)$-locally dense graph with $n$ vertices. Let $f:V(G) \to [0,1]$ be a function satisfying $\sum_{v \in V(G)}f(v) \geq \delta n$. Then
    \[
    \sum_{xy \in E(G)} f(x) f(y) \geq \frac p2 \left(\sum_{x \in V(G)}f(x)\right)^2 - n.
    \]
\end{lemma}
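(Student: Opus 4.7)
The plan is to lift the local density hypothesis from $\{0,1\}$-valued characteristic functions to $[0,1]$-valued ones by a randomized rounding argument. Consider the random subset $\tilde S \subseteq V(G)$ obtained by including each vertex $v$ independently with probability $f(v)$. A direct computation gives
\[
\E|\tilde S| = F \coloneqq \sum_v f(v) \geq \delta n, \qquad \E\, e(\tilde S) = \sum_{xy \in E(G)} f(x) f(y),
\]
together with $\E|\tilde S|^2 = F^2 + \sum_v f(v)(1 - f(v)) \in [F^2,\, F^2 + n]$. In particular, the quantity to be bounded is exactly $\E\, e(\tilde S)$, and $F^2$ and $\E|\tilde S|^2$ already agree to within an additive $n$, which is exactly the slack allowed in the statement of the lemma.

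On the event $\{|\tilde S| \geq \delta n\}$ the $(p,\delta)$-local density hypothesis yields $e(\tilde S) \geq \tfrac{p}{2}|\tilde S|^2$, and on the complementary event we only use $e(\tilde S) \geq 0$. Taking expectations,
\[
\E\, e(\tilde S) \geq \tfrac{p}{2}\E|\tilde S|^2 - \tfrac{p}{2}\E\bigl[|\tilde S|^2\,\1[|\tilde S|<\delta n]\bigr] \geq \tfrac{p}{2}F^2 - \tfrac{p}{2}\E\bigl[|\tilde S|^2\,\1[|\tilde S|<\delta n]\bigr],
\]
so the lemma reduces to controlling the last expectation by $2n/p$. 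A crude bound, using $|\tilde S|^2 \leq n|\tilde S|$ together with $\E[|\tilde S|\,\1[|\tilde S| < \delta n]] \leq \delta n$, gives only an $O(\delta n^2)$ estimate, which is not sharp enough; a tighter analysis using concentration of $|\tilde S|$ around $F$ (Chernoff rather than Markov), plus a separate treatment of the boundary regime $F \approx \delta n$, should bring the error down to the desired $O(n)$.

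Should the probabilistic route prove too lossy at the boundary, my backup plan is a deterministic layer-cake argument. Writing $f = \int_0^1 \1_{S_t}\,\dd t$ for the super-level sets $S_t = \{v : f(v) \geq t\}$, bilinearity yields
\[
\sum_{xy \in E(G)} f(x) f(y) = \tfrac{1}{2}\int_0^1\!\int_0^1 \1_{S_s}^{\top} A\,\1_{S_t}\,\dd s\,\dd t \quad \text{and}\quad F^2 = \int_0^1\!\int_0^1 |S_s|\,|S_t|\,\dd s\,\dd t,
\]
where $A$ is the adjacency matrix of $G$. Since the level sets are nested, for $s \leq t$ the integrand decomposes as $\1_{S_s}^{\top} A\, \1_{S_t} = 2\,e(S_t) + e(S_s \setminus S_t,\, S_t)$, and each piece can be estimated using local density applied to the appropriate $S_t$ or $S_s$ whenever it has size at least $\delta n$. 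Setting $t^\ast = \sup\{t : |S_t| \geq \delta n\}$, the contribution of $(s,t) \in [0, t^\ast]^2$ is handled cleanly by local density, while the tail $t > t^\ast$ contributes only a small error because $\int_{t^\ast}^1 |S_t|\,\dd t \leq (1 - t^\ast)\,\delta n \leq \delta n$.

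The main obstacle, in either formulation, is to tighten the error accounting from the natural $O(\delta^2 n^2)$ or $O(\delta n^2)$ estimates down to the stated $-n$. I expect this requires genuinely exploiting the linear bound $|\tilde S|^2 \leq n|\tilde S|$ (in the rounding formulation) or the fact that the ``small'' level sets have total $L^1$-mass at most $\delta n$ (in the layer-cake formulation), together with the ``free'' additive $n$ already built into the comparison between $F^2$ and $\E|\tilde S|^2$.
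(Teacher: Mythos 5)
Your randomized rounding approach has a structural gap in the boundary regime that cannot be repaired by sharpening concentration inequalities. To see this concretely, take $f \equiv \delta$, so $F = \delta n$ exactly and $|\tilde S| \sim \mathrm{Binom}(n,\delta)$. By the central limit theorem $\Pr[|\tilde S| < \delta n] \to 1/2$, and conditionally on this event $|\tilde S|$ is within $O(\sqrt{\delta n})$ of $\delta n$, so $\E[|\tilde S|^2\,\1[|\tilde S|<\delta n]] = (\tfrac12 + o(1))\,\delta^2 n^2$, which is of order $\delta^2 n^2 \gg n$ for fixed $\delta$ as $n \to \infty$. This is not a fat-tail issue that Chernoff can fix: the event $\{|\tilde S| < \delta n\}$ genuinely has probability close to $1/2$, and discarding $\E[e(\tilde S)\,\1[|\tilde S| < \delta n]]$ costs a \emph{constant fraction} of the main term, giving only $\E\,e(\tilde S) \gtrsim \tfrac{p}{4}F^2$ instead of $\tfrac{p}{2}F^2 - n$. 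The layer-cake backup has a different gap: for $s \leq t$ the cross term $e(S_s \setminus S_t,\, S_t)$ is a bipartite density, and the hypothesis supplies lower bounds only for one-set densities $e(S)$. Copositivity of $A - pJ$ (the continuous analogue of local density) does not yield $\1_X^\top(A - pJ)\1_Y \geq 0$ for disjoint nonnegative $X, Y$ --- this fails already for positive semidefinite matrices --- so there is no way to lower-bound the off-diagonal contributions by $p|S_s||S_t|$ minus an $O(n)$ error.

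Reiher's actual argument is variational rather than a rounding. Fix $s \geq \delta n$ and let $f^*$ minimize $q(f) := \sum_{xy \in E} f(x)f(y)$ over the compact polytope $\{f \in [0,1]^V : \sum_v f(v) = s\}$. If $u, w$ are adjacent with $f^*(u), f^*(w) \in (0,1)$, then $f^* + \varepsilon(\1_{\{u\}} - \1_{\{w\}})$ is feasible for small $|\varepsilon|$, and the contribution of the edge $uw$ gives $q(f^* + \varepsilon(\1_{\{u\}} - \1_{\{w\}})) = q(f^*) + \varepsilon L - \varepsilon^2$ for some linear coefficient $L$; choosing the sign of $\varepsilon$ appropriately contradicts minimality. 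Hence the fractional support $B = \{v : f^*(v) \in (0,1)\}$ is an independent set, and on the slice where we freeze $f^*$ off $B$, the function $q$ is \emph{affine} in $f^*|_B$, so its minimum is attained at a vertex of $\{g \in [0,1]^B : \sum_B g = s - \sum_{v \notin B} f^*(v)\}$, which has at most one fractional coordinate. Thus one may take $f^* = \1_T + \alpha\,\1_{\{w\}}$ with $|T| + \alpha = s$ and $\alpha \in [0,1)$, and apply local density directly to $T$ (if $|T| \geq \delta n$) or to $T \cup \{w\}$ (otherwise, since then $|T| + 1 \geq \delta n$), losing at most $n$ from weighting $w$'s edges by $\alpha$ and from replacing $s$ by $|T|$ or $|T| + 1$. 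The additive $-n$ comes from a \emph{single} fractional coordinate, which is precisely why a scheme that rounds the whole function at once cannot avoid a much larger error.
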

Note that if we let $f$ be the indicator function of a subset $S \subseteq V(G)$, then we precisely recover the condition in \cref{def:locally-dense}, apart from the lower-order error term. Naturally, there is an infinitary analogue of \cref{lem:reiher}. We use the standard notation $\norm f_1$ to denote the $L^1$ norm of $f:\Omega \to \R$, that is, $\norm f_1 \coloneqq \int_\Omega \ab{f(x)}\dd x$.
\begin{lemma}\label{lem:reiher graphon}
    Let $W$ be a graphon on $\Omega$. $W$ is $p$-locally dense if and only if for every bounded measurable function $f:\Omega \to [0,\infty)$, we have
    \begin{equation}\label{eq:reiher graphon}
    \iint_{\Omega \times \Omega} f(x) W(x,y) f(y) \dd x \dd y \geq p \norm f_1^2.
    \end{equation}
\end{lemma}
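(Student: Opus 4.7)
My plan is to prove the two directions separately. The ``$\Leftarrow$'' direction is immediate: applying the integral inequality with $f = \mathbf{1}_U$ for any measurable $U \subseteq \Omega$ reduces the conclusion to the defining inequality of $p$-local density.

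For the ``$\Rightarrow$'' direction, I would discretize and invoke the graph-theoretic Reiher lemma (\cref{lem:reiher}). Since both sides of the desired inequality are positive $2$-homogeneous in $f$, I may rescale to $f \colon \Omega \to [0,1]$, and the case $\norm{f}_1 = 0$ is trivial. For each large $N$, I plan to draw i.i.d.\ samples $X_1, \dots, X_N$ from $\mu$ and form the $W$-random graph $G_N$ on $[N]$, where each edge $ij$ is included independently with probability $W(X_i, X_j)$. Two Chernoff-plus-union-bound estimates---first transferring the local density of $W$ to the empirical averages $\tfrac{1}{|S|^2}\sum_{i,j \in S} W(X_i, X_j)$ over the sample, then transferring those averages to actual edge counts in $G_N$---should show that $G_N$ is $(p, \delta_N)$-locally dense with probability tending to $1$ for some $\delta_N \to 0$. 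On this high-probability event (on which also $\sum_i f(X_i) \geq \delta_N N$, by the law of large numbers once $\delta_N < \norm{f}_1$), \cref{lem:reiher} applied with $g_i \coloneqq f(X_i)$ yields
\[
\sum_{ij \in E(G_N)} g_i g_j \geq \frac{p}{2}\Bigl(\sum_i g_i\Bigr)^{\!2} - N.
\]
Taking full expectations---with the bad-event contribution absorbed via nonnegativity of the left-hand side and the super-polynomial decay of the bad probability---and dividing through by $N^2$, the left side converges to $\tfrac{1}{2}\iint f(x) W(x,y) f(y) \dd x \dd y$, the quadratic term to $\tfrac{p}{2}\norm{f}_1^2$, and the $N$-term to $0$, giving the inequality in the limit $N \to \infty$.

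The main technical obstacle is establishing the uniform local density of $G_N$ over all subsets $S \subseteq [N]$ simultaneously: Chernoff provides tail decay of order $\exp(-\Omega(|S|^2))$, which is enough to survive a $2^N$ union bound, but the two layers of randomness (sample points and edge indicators) must be chained carefully. A sampling-free alternative would be to partition $\Omega$ into $N$ equal-measure pieces, form the averaged step graphon $W_N$ and step function $f_N$, and apply a weighted variant of \cref{lem:reiher} to the resulting weighted graph; this bypasses the probabilistic union bound, but in exchange requires extending \cref{lem:reiher} to edge weights in $[0,1]$ and then passing to the $N \to \infty$ limit via martingale (Lebesgue-differentiation) convergence.
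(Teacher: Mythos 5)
Your ``$\Leftarrow$'' direction is the same as the paper's: take $f=\mathbf{1}_U$. For ``$\Rightarrow$'' you propose sampling a $W$-random graph $G_N$, applying \cref{lem:reiher} on the high-probability event that $G_N$ is locally dense, and taking expectations. The paper instead invokes \cref{lem:graphon LD} to get a \emph{single} sequence of locally dense graphs $G_n$ with $\norm{W_{G_n}-W}_\square\to 0$, sets $f_n(i)$ to be the average of $f$ over the $i$th cell, and applies \cref{lem:reiher} to each $G_n$; the cut norm convergence is used both to transport local density and to pass $\iint f\,W_{G_n}\,f\to\iint f\,W\,f$. These two routes are conceptually close (the existence theorem \cref{thm:graph limit} is itself proved via $W$-random graphs), but the paper's route keeps the probabilistic heavy lifting inside the black box of \cref{thm:graph limit}, while yours tries to redo it by hand.

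That is where the genuine gap lies. You claim that ``Chernoff provides tail decay of order $\exp(-\Omega(\ab S^2))$'' for both layers of randomness. That is correct only for the second layer (edge indicators conditional on the sample), where $e_{G_N}(S)$ is a sum of $\binom{\ab S}{2}$ independent Bernoullis and Hoeffding gives $\exp(-\Omega(\varepsilon^2\ab S^2))$. For the first layer---concentration of $\sum_{i,j\in S}W(X_i,X_j)$ around its mean over the choice of $X_1,\dots,X_N$---the sum involves only $\ab S$ independent coordinates, and changing one $X_i$ shifts the sum by up to $2\ab S$; McDiarmid therefore gives only $\exp(-\Omega(\varepsilon^2\ab S))$. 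Against the $2^N$ union bound this forces $\varepsilon=\Omega(1)$, so no $\delta_N\to 0$ can be extracted, and this is not repaired by ``chaining the two layers'' since the joint bounded-differences bound is dominated by the weaker first layer. The standard way around this (and implicitly what the paper relies on) is to go through a single cut-norm concentration statement---the first sampling lemma---which controls a single random variable $\norm{W[X]-W}_\square$ rather than $2^N$ events; its proof is not a plain Chernoff plus union bound but involves e.g. a weak-regularity or discretization argument. So as written, your main route does not close; your sampling-free alternative (averaging $W$ and $f$ over a deterministic partition) is closer to what actually works, and in fact the paper avoids the need for a weighted version of \cref{lem:reiher} by drawing actual graphs from \cref{thm:graph limit} rather than forming the averaged step graphon directly.
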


An immediate but important corollary of \cref{lem:reiher graphon} is that the property of being locally dense is invariant under changing the probability measure on $\Omega$. To state this result properly, we need to introduce a bit of extra notation. As we will now be dealing with different measures on $\Omega$, all of our integrals will be written with respect to a given measure, e.g.\ we will write $\iint_{\Omega \times \Omega}W(x,y)\dd \mu(x)\dd\mu(y)$. Similarly, all $L^p$ norms will be computed with respect to a given measure, and we denote, for example, $\norm f_{1,\mu}$ for $\int_\Omega \ab{f(x)}\dd \mu(x)$. Let us also say that a graphon $W$ is \emph{$p$-locally dense with respect to $\mu$} if \cref{def:locally dense graphon} holds for the measure $\mu$, that is, if $\iint_{U \times U}W(x,y)\dd \mu(x)\dd\mu(y)\geq p\mu(U)^2$ for all measurable sets $U\subseteq \Omega$.

Given a weight function $w:\Omega \to [0,\infty)$ with $\int_\Omega w(x)\dd \mu(x)=1$, we can obtain a new probability measure $\nu_w$ on $\Omega$ by integrating $w$, namely the measure of a set $U$ is defined as $\nu_w(U) \coloneqq \int_U w(x)\dd \mu(x)$. Equivalently, $\nu_w$ can be defined by saying that for any bounded function $f:\Omega \to \R$, we have $\int_\Omega f(x)\dd \nu_w(x)=\int_\Omega f(x)w(x)\dd \mu(x)$. 
With this setup, we can now state the result that changing the measure preserves local density.
\begin{lemma} \label{lem:change-measure}
    Let $(\Omega,\Sigma,\mu)$ be an atomless standard probability space, and let $W$ be a graphon on $\Omega$. Let $w:\Omega \to [0,\infty)$ be a bounded weight function with $\int_\Omega w(x)\dd \mu(x)=1$, and let $\nu_w$ be the associated probability measure on $\Omega$. If $W$ is $p$-locally dense with respect to $\mu$, then it is $p$-locally dense with respect to $\nu_w$.
\end{lemma}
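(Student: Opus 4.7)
The plan is to reduce the lemma directly to the infinitary Reiher lemma (\cref{lem:reiher graphon}), which gives an equivalent function-based characterization of local density. The key observation is that changing the measure on $\Omega$ from $\mu$ to $\nu_w$ corresponds exactly to reweighting the integrand by the density $w$, and this is precisely the kind of operation that the function-based condition of \cref{lem:reiher graphon} is robust under.

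Concretely, I would start by fixing an arbitrary measurable set $U \subseteq \Omega$ and unpacking the definition of $\nu_w$:
\[
\iint_{U \times U} W(x,y) \dd\nu_w(x) \dd\nu_w(y) = \iint_{\Omega \times \Omega} \bigl(\1_U(x) w(x)\bigr) W(x,y) \bigl(\1_U(y) w(y)\bigr) \dd\mu(x) \dd\mu(y).
\]
Then I would set $f(x) \coloneqq \1_U(x) w(x)$, which is bounded, nonnegative, and measurable since $w$ is assumed bounded. Since $W$ is $p$-locally dense with respect to $\mu$, applying \cref{lem:reiher graphon} (the ``if'' direction) to this $f$ yields
\[
\iint_{\Omega \times \Omega} f(x) W(x,y) f(y) \dd\mu(x) \dd\mu(y) \geq p \norm{f}_{1,\mu}^2.
\]

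Finally, I would compute $\norm{f}_{1,\mu} = \int_\Omega \1_U(x) w(x) \dd\mu(x) = \nu_w(U)$ straight from the definition of the measure $\nu_w$, so the inequality above becomes
\[
\iint_{U \times U} W(x,y) \dd\nu_w(x) \dd\nu_w(y) \geq p \nu_w(U)^2,
\]
which is exactly the statement that $W$ is $p$-locally dense with respect to $\nu_w$. Since $U$ was arbitrary, this completes the proof.

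There is no real obstacle here: the entire content of the lemma is hidden in \cref{lem:reiher graphon}, which upgrades local density from a set condition to a condition on arbitrary bounded nonnegative functions. The only minor point to check is that $f = \1_U \cdot w$ is indeed a valid test function---boundedness of $w$ guarantees this---and that the substitution $\dd\nu_w(x) = w(x)\dd\mu(x)$ under the double integral is justified, which is immediate from the definition of $\nu_w$ together with Fubini--Tonelli applied to the nonnegative integrand.
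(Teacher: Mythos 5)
Your proof is correct and takes essentially the same route as the paper: both proofs reduce to \cref{lem:reiher graphon} by absorbing the weight $w$ into the test function. The only cosmetic difference is that the paper verifies the full function-based characterization of $p$-local density with respect to $\nu_w$ (i.e.\ checks the inequality for all bounded nonnegative $f$, with test function $g=wf$), whereas you verify \cref{def:locally dense graphon} directly by taking the specific test function $\1_U w$; your version actually invokes only the nontrivial direction of \cref{lem:reiher graphon}, so it is marginally more economical, but the idea is identical.
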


In our proofs, the following consequence of \cref{lem:change-measure} will be particularly useful. Both \cref{lem:change-measure,lem:count-with-weights} are proved in \cref{sec:graphon appendix}.
\begin{lemma} \label{lem:count-with-weights}
     Let $(\Omega,\Sigma,\mu)$ be an atomless standard probability space, and let $W$ be a graphon on $\Omega$. Let $w:\Omega \to [0,\infty)$ be a bounded weight function. If $H$ is a KNRS graph, then
     \[ \int_{\Omega^{V(H)}} \prod_{v \in V(H)} w(x_v) \prod_{uv \in E(H)} W(x_u, x_v) \prod_{v \in V(H)} \dd\mu(x_v) \ge \norm w_1^{v(H)} p^{e(H)}. \]
\end{lemma}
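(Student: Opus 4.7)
The plan is to reduce the weighted integral inequality to an unweighted statement about a suitably reweighted probability measure, where the conclusion becomes the defining property of KNRS graphs.

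First I would dispense with trivial cases: if $\norm w_1 = 0$, then $w = 0$ a.e., so the left-hand side vanishes and the inequality is trivial. Otherwise, observe that both sides of the claimed inequality are homogeneous of degree $v(H)$ in $w$; indeed, replacing $w$ by $cw$ multiplies the left-hand side by $c^{v(H)}$ and the right-hand side by $c^{v(H)}$. So it suffices to handle the case $\norm w_1 = 1$.

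Under this normalization, $w$ is a probability density with respect to $\mu$, and hence defines the probability measure $\nu_w$ as in the setup preceding \cref{lem:change-measure}. Since $W$ is (implicitly) assumed to be $p$-locally dense with respect to $\mu$, \cref{lem:change-measure} guarantees that $W$ is also $p$-locally dense with respect to $\nu_w$. Now, by the defining relation $\int_\Omega f \dd \nu_w = \int_\Omega f \cdot w \dd \mu$, the left-hand side of the desired inequality is precisely
\[
\int_{\Omega^{V(H)}} \prod_{uv \in E(H)} W(x_u,x_v) \prod_{v \in V(H)} \dd \nu_w(x_v) = t_{\nu_w}(H,W).
\]

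Since $H$ is KNRS and $W$ is $p$-locally dense with respect to the probability measure $\nu_w$, \cref{lemit:KNRS} gives $t_{\nu_w}(H,W) \ge p^{e(H)}$, which is exactly the claim when $\norm w_1 = 1$; the general case then follows by rescaling. I do not anticipate any real obstacle; the lemma is essentially a repackaging of \cref{lem:change-measure} combined with \cref{lem:graphon equivalence}, with the only step requiring minor care being the verification that $\nu_w$ is genuinely an atomless standard probability space (which follows because $\mu$ is atomless and $w$ is bounded, so \cref{lem:change-measure} is applicable as stated).
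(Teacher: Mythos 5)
Your proof is correct and follows essentially the same route as the paper: reduce to a normalized weight, apply \cref{lem:change-measure} to see that $W$ remains $p$-locally dense under the reweighted measure $\nu_w$, recognize the weighted integral as $t_{\nu_w}(H,W)$, and invoke \cref{lemit:KNRS}. The only cosmetic difference is that you reduce to $\norm{w}_1 = 1$ via a homogeneity argument up front, whereas the paper keeps $w$ unnormalized, works with $w' = w/\norm{w}_1$, and factors out $\norm{w}_1^{v(H)}$ at the end; you also correctly flagged the implicit hypothesis that $W$ is $p$-locally dense, which the lemma statement omits but the paper's proof clearly assumes.
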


\subsection{Kernels as linear and bilinear operators}\label{sec:linear bilinear}
A kernel can be thought of as an infinite-dimensional generalization of a matrix, and just as matrices act as linear and bilinear operators, so do kernels. We collect here the important properties and definitions that we will need about such actions; a detailed introduction can be found in \cite[Section 7.5]{MR3012035}, and the functional analysis background can be found in, for example, \cite[Chapter 4]{MR2129625} or \cite[Chapter VI]{MR0751959}.

Given a kernel $W$ on $\Omega$ and a parameter $1 \leq p\leq \infty$, we can define a linear operator $T_W:L^p(\Omega) \to L^p(\Omega)$ by
\[
(T_W f)(x) \coloneqq \int_\Omega W(x,y) f(y)\dd y.
\]
The fact that $W$ is bounded and that $\Omega$ is a probability space implies that $T_Wf \in L^p$ whenever $f \in L^p$. A function $f \in L^2$ is called an \emph{eigenfunction} of $T_W$ with \emph{eigenvalue} $\lambda$ if $T_Wf = \lambda f$ a.e. It follows from standard Hilbert--Schmidt theory that $T_W$ is a self-adjoint compact operator on $L^2$, hence the spectral theorem implies that there exist real numbers $\lambda_1,\lambda_2,\dots$ and functions $f_1,f_2,\dots \in L^2$ such that each $f_i$ is an eigenfunction of $T_W$ with eigenvalue $\lambda_i$, and such that the set $\{f_i\}$ forms an orthonormal basis of $L^2$. In particular, this implies that there is a spectral decomposition of $W$ as
\[
W(x,y) \sim \sum_{i=1}^\infty \lambda_i f_i(x) f_i(y).
\]
Here, the symbol $\sim$ denotes that the series on the right-hand side may not converge for a.e.\ $x,y$, but that the right-hand side converges to $W(x,y)$ in $L^2$. We note for future reference that, although a priori the eigenfunctions $f_i$ are only in $L^2$, one can show that eigenfunctions associated to a non-zero eigenvalue are bounded; see \cite[Proposition 7.17]{MR3012035} for a proof. The following result, an analogue of one of the basic results in spectral graph theory, shows that the eigenvalues of a graphon can be used to compute the homomorphism count of any cycle.
\begin{theorem}[{\cite[(7.22)]{MR3012035}}] \label{thm:cycle-count}
    Let $W$ be a kernel and let $\lambda_1, \lambda_2, \dots, $ be its sequence of eigenvalues with $|\lambda_1| \ge |\lambda_2| \ge \dots.$ Then, for any $k \ge 2,$
    \[ t(C_k, W) = \sum_{i=1}^\infty \lambda_i^k. \]
\end{theorem}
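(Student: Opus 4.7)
The plan is to use the spectral decomposition $W(x,y) \sim \sum_i \lambda_i f_i(x) f_i(y)$ from Hilbert--Schmidt theory to expand both sides of the claimed identity directly. The identity itself is the infinite-dimensional analogue of the classical spectral formula from graph theory: the number of closed walks of length $k$ in a graph equals $\mathrm{tr}(A^k) = \sum_i \mu_i^k$, where $A$ is the adjacency matrix. Here the role of $\mathrm{tr}(A^k)$ is played by $t(C_k, W) = \mathrm{tr}(T_W^k)$, and the eigenvalues of $T_W^k$ are precisely $\lambda_i^k$ (with the same orthonormal basis of eigenfunctions).

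Concretely, I would first consider the finite-rank truncation $W_N(x,y) := \sum_{i=1}^N \lambda_i f_i(x) f_i(y)$, which converges to $W$ in $L^2(\Omega \times \Omega)$ by the spectral theorem. Substituting into the definition of $t(C_k, \cdot)$ and expanding by multilinearity yields
\[
t(C_k, W_N) = \sum_{i_1, \dots, i_k = 1}^N \lambda_{i_1} \cdots \lambda_{i_k} \prod_{j=1}^k \int_\Omega f_{i_{j-1}}(x_j) f_{i_j}(x_j) \, dx_j,
\]
with indices taken modulo $k$. Orthonormality of $\{f_i\}$ collapses each factor to $\delta_{i_{j-1}, i_j}$, forcing $i_1 = i_2 = \cdots = i_k$, so $t(C_k, W_N) = \sum_{i=1}^N \lambda_i^k$.

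Passing to the limit $N \to \infty$ then gives the theorem. Absolute convergence of the right-hand side is immediate: since $W$ is bounded and $\Omega$ has finite measure, $W \in L^2$, so $\sum_i \lambda_i^2 = \norm{W}_2^2 < \infty$; for $k \geq 2$, the bound $\ab{\lambda_i}^k \leq \ab{\lambda_1}^{k-2} \lambda_i^2$ gives $\sum_i \ab{\lambda_i}^k < \infty$. The main obstacle is convergence of the left-hand side, $t(C_k, W_N) \to t(C_k, W)$, since the truncations $W_N$ need not be uniformly bounded in $L^\infty$. I would handle this at the operator level: $T_W$ is Hilbert--Schmidt, so $T_W^k$ is trace class for $k \geq 2$, and one identifies $t(C_k, W) = \mathrm{tr}(T_W^k)$ via the standard formula $\mathrm{tr}(AB) = \iint A(x,y) B(y,x) \, dx \, dy$ for Hilbert--Schmidt $A, B$, applied with $A = T_W^{\lfloor k/2 \rfloor}$ and $B = T_W^{\lceil k/2 \rceil}$ and Fubini used to unwind the iterated kernels. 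Since $\norm{T_W - T_{W_N}}_{HS} = \norm{W - W_N}_{L^2} \to 0$, continuity of $T \mapsto T^k$ on bounded sets in the Hilbert--Schmidt norm, together with continuity of the trace on the trace class, yields $\mathrm{tr}(T_{W_N}^k) \to \mathrm{tr}(T_W^k)$, completing the argument. The case $k = 2$ is especially clean, as there $t(C_2, W) = \norm{W}_2^2 = \sum_i \lambda_i^2$ is just Parseval's identity applied to the spectral expansion.
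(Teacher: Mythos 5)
The paper cites this result directly from Lov\'asz \cite[(7.22)]{MR3012035} without giving a proof of its own, so there is no in-paper argument to compare against; your proof is the standard spectral one found there. The argument is correct: the finite-rank truncation computes $t(C_k,W_N)=\sum_{i\leq N}\lambda_i^k$ by orthonormality, absolute convergence of $\sum_i\lambda_i^k$ for $k\geq 2$ follows from $\sum_i\lambda_i^2=\norm W_2^2<\infty$, and the passage to the limit via the trace-class identification $t(C_k,W)=\operatorname{tr}(T_W^k)$ (using $\operatorname{tr}(AB)$ for Hilbert--Schmidt $A,B$, together with Fubini, which is justified since $W$ is bounded) is the right way to handle the fact that the truncations $W_N$ need not be uniformly bounded. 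One small simplification worth noting: once you have $t(C_k,W)=\operatorname{tr}(T_W^k)$, you can finish immediately by observing that $T_W^k$ is trace class with eigenvalues $\lambda_i^k$, so its trace equals $\sum_i\lambda_i^k$ (by the spectral theorem for compact self-adjoint operators), bypassing the $W_N$ approximation entirely.
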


Recall that a matrix $M \in \R^{n\times n}$ is called \emph{copositive} if $v^T Mv \geq 0$ for all vectors $v \in \R^n$ all of whose coordinates are non-negative.
Extending the definition of copositivity from matrices, let us say that a kernel $W$ is \emph{copositive} if, for every bounded $f:\Omega \to [0,\infty)$, we have
\[
\iint_{\Omega \times \Omega} f(x)W(x,y)f(y) \geq 0.
\]
More restrictively, we say that $W$ is \emph{positive semidefinite} if the same condition holds for every bounded $f:\Omega \to \R$, rather than only for bounded non-negative functions. 

Recall that $L^2$ is endowed with a natural inner product, given by $\inner fg = \int_\Omega f(x)g(x)\dd x$. With this notation, we can more concisely say that $W$ is copositive if $\inner f{T_W f}\geq 0$ for every bounded non-negative $f$, and that it is positive semidefinite if $\inner f{T_W f}\geq 0$ for every bounded real-valued $f$.
We will need the following two basic lemmas; the first is a generalization of a well-known fact about matrices, while the second connects locally dense graphons to copositive kernels. Proofs of both are given in \cref{sec:graphon appendix}.
\begin{lemma}\label{lem:PSD eigenvalues}
    A kernel $W$ is positive semidefinite if and only if all eigenvalues of $T_W$ are non-negative.
\end{lemma}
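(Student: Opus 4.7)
The plan is to use the fact that $T_W$ is a compact self-adjoint operator on $L^2(\Omega)$, so the spectral theorem supplies an orthonormal basis $\{f_i\}_{i\ge 1}$ of $L^2(\Omega)$ consisting of eigenfunctions with real eigenvalues $\lambda_i$. Both directions of the equivalence then follow from standard Hilbert-space manipulations, and the main care needed is to reconcile the definition of positive semidefiniteness (which quantifies over \emph{bounded} functions) with the appearance of possibly unbounded $L^2$ functions in the spectral expansion.

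For the forward direction, suppose $W$ is positive semidefinite. Let $\lambda$ be an eigenvalue of $T_W$ with eigenfunction $f$. If $\lambda=0$ there is nothing to prove, so assume $\lambda\ne 0$. The cited fact that eigenfunctions associated to nonzero eigenvalues are bounded (see \cite[Proposition 7.17]{MR3012035}) ensures we may plug $f$ directly into the defining inequality for positive semidefiniteness, yielding
\[
0 \le \inner{f}{T_W f} = \lambda \inner{f}{f} = \lambda \norm{f}_2^2.
\]
Since $f\ne 0$ as an eigenfunction, this forces $\lambda \ge 0$.

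For the converse, suppose every eigenvalue $\lambda_i$ is non-negative, and let $f:\Omega\to\R$ be any bounded function. Since $\mu$ is a probability measure, $f\in L^2(\Omega)$, so we can expand $f = \sum_{i} c_i f_i$ in $L^2$, where $c_i = \inner{f}{f_i}$. Because $T_W$ is a bounded operator on $L^2$, we can apply it termwise (convergence still in $L^2$) to obtain $T_W f = \sum_i c_i \lambda_i f_i$. Taking the inner product with $f$ and using orthonormality of the $\{f_i\}$, this gives
\[
\inner{f}{T_W f} = \sum_{i=1}^\infty \lambda_i c_i^2 \ge 0,
\]
since each $\lambda_i \ge 0$. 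This holds in particular for all bounded non-negative $f$, so $W$ is positive semidefinite.

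The only subtlety is the first direction: one has to invoke the boundedness of eigenfunctions with nonzero eigenvalue in order to even evaluate $\inner{f}{T_W f}$ against the definition of PSD as stated. Once that is handled, the rest is a routine application of the spectral decomposition and orthonormality.
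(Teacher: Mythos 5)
Your proof is correct and takes essentially the same approach as the paper: in the forward direction you use the boundedness of eigenfunctions with nonzero eigenvalue to test the PSD condition directly, and in the converse you expand a bounded test function in the orthonormal eigenbasis and observe that $\inner{f}{T_Wf}=\sum_i\lambda_i\inner{f}{f_i}^2\ge 0$.
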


\begin{lemma}\label{lem:LD iff copositive}
    A graphon $W$ is $p$-locally dense if and only if the kernel $W-p$ is copositive.
\end{lemma}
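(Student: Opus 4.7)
The plan is to observe that this lemma is essentially an immediate corollary of the infinitary Reiher lemma (\cref{lem:reiher graphon}), which has already been stated. The key idea is to rewrite the copositivity condition for the kernel $W - p$ in a form that matches the hypothesis of \cref{lem:reiher graphon}.

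First I would note that for any bounded measurable $f : \Omega \to [0,\infty)$, since $\mu$ is a probability measure and the kernel $W - p$ is bounded, symmetric, and measurable (hence a valid kernel), we have the identity
\[
\iint_{\Omega \times \Omega} f(x)\bigl(W(x,y) - p\bigr) f(y) \dd x \dd y = \iint_{\Omega \times \Omega} f(x) W(x,y) f(y) \dd x \dd y - p \norm{f}_1^2.
\]
Therefore the statement ``$W - p$ is copositive'' is literally equivalent to the inequality
\[
\iint_{\Omega \times \Omega} f(x) W(x,y) f(y) \dd x \dd y \geq p \norm{f}_1^2 \quad\text{for every bounded } f : \Omega \to [0,\infty),
\]
which is exactly the conclusion \eqref{eq:reiher graphon} of \cref{lem:reiher graphon}. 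Since \cref{lem:reiher graphon} asserts that this fractional inequality is equivalent to $W$ being $p$-locally dense, both directions of the current lemma follow at once.

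Because the entire argument reduces to a one-line rewriting and a citation of \cref{lem:reiher graphon}, there is no substantive obstacle: the ``hard work'' is already packaged inside \cref{lem:reiher graphon}, whose nontrivial direction is Reiher's fractionalization of local density. The only thing to keep in mind when writing this up is to make sure the algebraic manipulation is done in the correct order (subtracting the constant $p$ first, then invoking \cref{lem:reiher graphon}), and to phrase the equivalence so that both implications are visibly symmetric.
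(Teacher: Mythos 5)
Your proof is correct and follows the paper's argument exactly: rewrite the copositivity condition for $W-p$ using $\iint_{\Omega\times\Omega} f(x)f(y)\dd x\dd y = \norm{f}_1^2$ for non-negative $f$, and then observe that the resulting inequality is precisely the condition in \cref{lem:reiher graphon}. Nothing to add.
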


In general, being positive semidefinite is a much stronger condition than being copositive, and the space of positive semidefinite kernels is much more well-behaved than the space of copositive kernels. Nonetheless, there is a simple condition which can be used to show that a copositive kernel is positive semidefinite.
\begin{lemma} \label{lem:positive in nullspace}
    Let $W$ be a  kernel. Suppose that there exists a bounded function $g:\Omega \to \R$ with $\inf_\Omega g > 0$ and $T_W g = 0$ a.e. If $W$ is copositive, then it is positive semidefinite.
\end{lemma}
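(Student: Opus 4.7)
The plan is to exploit the fact that $g$ lies in the nullspace of $T_W$ to freely shift an arbitrary bounded test function by multiples of $g$ without changing the value of the quadratic form $\langle f, T_W f\rangle$. Since $g$ is bounded below by a positive constant, a large enough shift will make $f+cg$ nonnegative, at which point copositivity gives the desired inequality.

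More concretely, let $f\colon \Omega \to \R$ be an arbitrary bounded function; we must show $\langle f, T_W f\rangle \geq 0$. Since $T_W$ is self-adjoint on $L^2$ (the kernel $W$ is symmetric) and $T_W g = 0$ a.e., we have $\langle g, T_W f\rangle = \langle T_W g, f\rangle = 0$ and $\langle g, T_W g\rangle = 0$. Consequently, for every real constant $c$,
\[
\langle f + cg, T_W(f+cg)\rangle = \langle f, T_W f\rangle + 2c\langle g, T_W f\rangle + c^2\langle g, T_W g\rangle = \langle f, T_W f\rangle.
\]
Now choose $c \coloneqq \sup_\Omega |f| / \inf_\Omega g$, which is finite since $f$ is bounded and $\inf_\Omega g > 0$. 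Then $f(x) + cg(x) \geq -\sup_\Omega |f| + c \inf_\Omega g \geq 0$ for a.e.\ $x$, and $f+cg$ is still bounded. Applying copositivity of $W$ to the bounded nonnegative function $f+cg$ yields $\langle f+cg, T_W(f+cg)\rangle \geq 0$, and combining with the identity above gives $\langle f, T_W f\rangle \geq 0$. Since $f$ was an arbitrary bounded real-valued function, $W$ is positive semidefinite.

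There isn't really a hard step here; the only thing one has to notice is the little identity that $T_W g = 0$ forces $\langle f, T_W f\rangle$ to be invariant under shifts $f \mapsto f+cg$, which together with $\inf g > 0$ lets us reduce the general case to the nonnegative case handled by copositivity.
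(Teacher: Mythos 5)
Your proof is correct and takes essentially the same approach as the paper: shift $f$ by a sufficiently large multiple of $g$ to make it nonnegative, observe that the quadratic form is unchanged because $T_W g = 0$, and apply copositivity.
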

\begin{proof}
    Let $f:\Omega \to \R$ be a bounded function. Since $\inf g > 0$, for sufficiently large $C \in \R$, we have $f+Cg \geq 0.$ Moreover, $f+Cg$ is bounded, so the copositivity of $W$ implies that
    \[ 0 \leq \inner{f+Cg}{T_W(f+Cg)} = \inner f{T_Wf} + 2C\inner{f}{T_W g} + C^2 \inner{T_W g}{T_W g} = \inner{f}{T_Wf}. \]
    As $f$ was arbitrary, $W$ is positive semidefinite.
\end{proof}
We will mostly use this lemma in the form of the following corollary. 
\begin{corollary}\label{cor:copositive-regular}
    Let $W$ be a $p$-locally dense, $p$-regular graphon. Then $W-p$ is positive semidefinite.
\end{corollary}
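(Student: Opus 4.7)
The plan is to apply \cref{lem:positive in nullspace} directly to the kernel $W - p$, so I need to verify its two hypotheses: copositivity and the existence of a strictly positive bounded function in the nullspace of $T_{W-p}$.

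First, I would invoke \cref{lem:LD iff copositive}: since $W$ is $p$-locally dense, the kernel $W - p$ is copositive. This dispatches the first hypothesis of \cref{lem:positive in nullspace}.

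For the second hypothesis, I would take $g \equiv 1$, the constant function with value $1$ on $\Omega$. This is bounded with $\inf_\Omega g = 1 > 0$. Using that $W$ is $p$-regular, namely $d_W(x) = \int_\Omega W(x,y)\dd y = p$ for a.e.\ $x \in \Omega$, I compute
\[
(T_{W-p}g)(x) = \int_\Omega (W(x,y) - p)\dd y = d_W(x) - p = 0
\]
for a.e.\ $x$. Thus $T_{W-p}g = 0$ a.e., so \cref{lem:positive in nullspace} applies and yields that $W - p$ is positive semidefinite.

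There is no real obstacle here; the corollary is essentially an immediate specialization of \cref{lem:positive in nullspace} to the natural candidate $g \equiv 1$, with $p$-regularity of $W$ supplying exactly the property that places $1$ in the kernel of the associated operator.
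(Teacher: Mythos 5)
Your proof is correct and follows the paper's argument exactly: both use \cref{lem:LD iff copositive} to get copositivity of $W-p$, take $g\equiv 1$, observe that $p$-regularity gives $T_{W-p}g = 0$, and then apply \cref{lem:positive in nullspace}.
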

\begin{proof}
    If we let $g$ denote the constant $1$ function, then the $p$-regularity of $W$ is equivalent to saying that $T_W g= pg$. Hence $T_{W-p}g = 0$. As $W-p$ is copositive by \cref{lem:LD iff copositive}, \cref{lem:positive in nullspace} implies that $W-p$ is positive semidefinite.
\end{proof}
\cref{cor:copositive-regular} is, essentially, the reason why we are able to prove substantially more about regular-KNRS graphs than KNRS graphs. Indeed, since the space of PSD kernels is well-behaved, many arguments work for proving that a given graph is regular-KNRS but fail for proving that it is KNRS.

Given two kernels $W_1,W_2$, their \emph{operator product} $W_1 \circ W_2$ is defined by
\[
(W_1 \circ W_2)(x,y) \coloneqq \int_\Omega W_1(x,z)W_2(z,y)\dd z.
\]
    Note that $W_1\circ W_2$ is another kernel, as the boundedness of $W_1,W_2$ implies that $W_1 \circ W_2$ is bounded as well. This definition extends the standard definition of matrix multiplication, and is chosen so that $T_{W_1 \circ W_2} = T_{W_1} \circ T_{W_2}$, where $\circ$ on the right-hand side denotes function composition. We denote the \emph{operator power}, defined as the $k$-fold operator product of $W$ with itself, by $W^{\circ k}$. The main result we will need about operator products is that $W$ and $W^{\circ k}$ have the same eigenfunctions, and that the eigenvalues of $W^{\circ k}$ are given by $\{\lambda_i^k\}$, where $\{\lambda_i\}$ are the eigenvalues of $W$.

Another important operation is the \emph{tensor product}. Given two kernels $W_1,W_2$ on $\Omega$, their tensor product $W_1 \otimes W_2$ is a kernel on $\Omega \times \Omega$ defined by
\[
(W_1 \otimes W_2)((x_1,x_2),(y_1,y_2)) \coloneqq W_1(x_1,y_1)W_2(x_2,y_2).
\]
It is not hard to show, extending the well-known property of matrix tensor products, that the eigenvalues of $W_1 \otimes W_2$ are given by $\{\lambda_i \mu_j\}_{i,j=1}^\infty$, where $\{\lambda_i\},\{\mu_j\}$ are the eigenvalues of $W_1,W_2$, respectively.
The most useful property of the tensor product is that graph homomorphism densities are multiplicative over it, that is, for every graph $H$ and every pair of kernels $W_1,W_2$, we have
\begin{equation} \label{eq:hom-density-in-tensor}
t(H,W_1 \otimes W_2) = t(H,W_1)t(H,W_2).
\end{equation}
We denote the \emph{tensor power}, defined as the $k$-fold tensor product of $W$ with itself, by $W^{\otimes k}$.
\begin{lemma} \label{lem:walks-psd}
    If $W$ is a positive semidefinite kernel, then $W^{\circ k}, W^{\otimes k}$ are positive semidefinite for all $k\geq 1$.
\end{lemma}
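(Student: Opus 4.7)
The plan is to derive both statements from \cref{lem:PSD eigenvalues}, which says that PSD is equivalent to non-negativity of the spectrum of $T_W$. Since $W$ is PSD by hypothesis, $\inner{g}{T_W g} \geq 0$ for every bounded $g:\Omega \to \R$, and all eigenvalues $\{\lambda_i\}$ of $T_W$ are non-negative.

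For the operator power, I would argue directly without invoking the spectrum. The identity $T_{W^{\circ k}} = T_W^k$ combined with self-adjointness of $T_W$ (which follows from the symmetry of $W$) handles both parities of $k$ at once. Given a bounded $f:\Omega \to \R$, set $g_m \coloneqq T_W^m f$; note that $g_m$ is bounded because $\norm{T_W h}_\infty \leq \norm{W}_\infty \norm{h}_\infty$, so iterated applications of $T_W$ preserve boundedness. When $k=2m$, self-adjointness gives
\[
\inner{f}{T_{W^{\circ k}}f} = \inner{f}{T_W^{2m}f} = \inner{g_m}{g_m} \geq 0,
\]
and when $k=2m+1$, the same manipulation yields $\inner{f}{T_W^{2m+1}f} = \inner{g_m}{T_W g_m}$, which is non-negative because $g_m$ is a bounded real-valued function and $W$ is PSD. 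Hence $W^{\circ k}$ is PSD in both cases.

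For the tensor power, I would use the eigenvalue description recorded just before the lemma statement: the eigenvalues of $W_1 \otimes W_2$ are the products $\{\lambda_i \mu_j\}_{i,j}$ of eigenvalues of $W_1$ and $W_2$. Iterating this, the eigenvalues of $W^{\otimes k}$ are all products of the form $\lambda_{i_1}\cdots \lambda_{i_k}$, each of which is non-negative since each $\lambda_{i_\ell} \geq 0$. Applying \cref{lem:PSD eigenvalues} in the reverse direction gives that $W^{\otimes k}$ is PSD.

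No serious obstacle arises here: both claims follow immediately from machinery already assembled in \cref{sec:linear bilinear}. The only minor care needed is to justify that $T_W$ preserves boundedness, so that the PSD property of $W$ can legitimately be tested against $g_m$ in the odd-$k$ case of the operator-power argument; this is a one-line estimate and does not represent a real difficulty.
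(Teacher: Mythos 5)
Your proof is correct. The tensor-power half is exactly the paper's argument: eigenvalues of $W^{\otimes k}$ are products $\lambda_{i_1}\cdots\lambda_{i_k}$ of eigenvalues of $W$, each factor is non-negative by \cref{lem:PSD eigenvalues} applied forward, and then \cref{lem:PSD eigenvalues} applied in reverse gives positive semidefiniteness. For the operator-power half, though, you take a genuinely different and more elementary route. The paper again invokes the eigenvalue characterization (eigenvalues of $W^{\circ k}$ are $\lambda_i^k \geq 0$), whereas you argue directly on the bilinear form: using $T_{W^{\circ k}} = T_W^k$, self-adjointness of $T_W$, and the observation that $T_W$ preserves boundedness (since $\Omega$ is a probability space and $W$ is bounded), you reduce $\inner{f}{T_W^{2m}f}$ to a squared norm and $\inner{f}{T_W^{2m+1}f}$ to $\inner{g_m}{T_W g_m}$ with $g_m = T_W^m f$ bounded, so the definition of PSD applies directly. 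Your version buys a proof of this half that bypasses the spectral theorem entirely and works at the level of the quadratic form; the paper's version buys uniformity, handling both operations by the same eigenvalue fact in a single line. Either is perfectly sound, and your remark about verifying boundedness of $g_m$ is exactly the right point of care for the odd case.
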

\begin{proof}
    This follows immediately from \cref{lem:PSD eigenvalues} and the characterization of the eigenvalues of $W^{\circ k}$ and $W^{\otimes k}$.
\end{proof}
In addition to matrix powers and tensor products, another matrix operation that preserves positive-semidefiniteness is the Hadamard product. In the world of kernels, this corresponds to the pointwise product, namely given kernels $W_1,W_2$, we define the Hadamard product $W_1 \odot W_2$ by $(W_1 \odot W_2)(x,y) \coloneqq W_1(x,y) W_2(x,y)$. Since $W_1,W_2$ are measurable and bounded, so is $W_1 \odot W_2$, hence it is also a kernel. As with matrices, the Hadamard product preserves the positive semidefinite cone.
\begin{lemma}\label{lem:hadamard PSD}
    If $W_1,W_2$ are positive semidefinite kernels, then so is $W_1 \odot W_2$.
\end{lemma}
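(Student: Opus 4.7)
The plan is to emulate the classical Schur product theorem for positive-semidefinite matrices, using the spectral decompositions of $W_1$ and $W_2$ recalled in \cref{sec:linear bilinear}. By \cref{lem:PSD eigenvalues}, $W_1$ has a spectral decomposition $W_1 \sim \sum_i \lambda_i f_i(x) f_i(y)$ with $\lambda_i \geq 0$, and similarly $W_2 \sim \sum_j \mu_j g_j(x) g_j(y)$ with $\mu_j \geq 0$. Formally, the pointwise product expands as
\[ (W_1 \odot W_2)(x,y) \sim \sum_{i,j} \lambda_i \mu_j \bigl(f_i(x) g_j(x)\bigr)\bigl(f_i(y) g_j(y)\bigr), \]
which exhibits it as a non-negative combination of rank-one PSD kernels. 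The only real work is making this formal manipulation rigorous in the infinite-dimensional setting.

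Here is how I would carry this out. Fix an arbitrary bounded $f \colon \Omega \to \R$; the goal is to show $\inner{f}{T_{W_1 \odot W_2} f} \geq 0$. Introduce the auxiliary function $h(x,y) \coloneqq f(x) f(y) W_1(x,y)$, which is bounded (since $f$ and $W_1$ are) and hence lies in $L^2(\Omega \times \Omega)$. Then by definition
\[ \inner{f}{T_{W_1 \odot W_2} f} = \iint_{\Omega \times \Omega} h(x,y) W_2(x,y) \dd x \dd y = \inner{h}{W_2}_{L^2(\Omega\times\Omega)}. \]
Since $W_2$ is bounded it lies in $L^2(\Omega\times\Omega)$, and by Hilbert--Schmidt theory the spectral representation $W_2 = \sum_j \mu_j (g_j \otimes g_j)$ converges in $L^2(\Omega\times\Omega)$. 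Continuity of the $L^2$ inner product therefore yields
\[ \inner{h}{W_2}_{L^2} = \sum_j \mu_j \inner{h}{g_j \otimes g_j}_{L^2} = \sum_j \mu_j \inner{f g_j}{T_{W_1}(f g_j)}. \]

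Each summand is non-negative. Indeed, if $\mu_j > 0$, then the associated eigenfunction $g_j$ is bounded (eigenfunctions of non-zero eigenvalues of a kernel are bounded, as noted in \cref{sec:linear bilinear}), so $f g_j$ is a bounded function, and the positive-semidefiniteness of $W_1$ gives $\inner{f g_j}{T_{W_1}(f g_j)} \geq 0$; if $\mu_j = 0$, the term vanishes. Summing yields $\inner{f}{T_{W_1 \odot W_2} f} \geq 0$, and since $W_1 \odot W_2$ is automatically a kernel (a pointwise product of bounded, symmetric, measurable functions is bounded, symmetric, and measurable), we conclude that $W_1 \odot W_2$ is positive semidefinite.

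The main obstacle is the convergence subtlety: one needs to justify exchanging the infinite spectral sum with integration against $W_1$ and $f$. This is handled by packaging everything involving $W_1$ and $f$ into the single bounded test function $h$, and then pairing $W_2$ against $h$ in $L^2(\Omega\times\Omega)$, where convergence of the spectral expansion is guaranteed. The remaining technical point---that eigenfunctions corresponding to non-zero eigenvalues are bounded---is needed precisely so that $f g_j$ is a valid test function for the PSD property of $W_1$.
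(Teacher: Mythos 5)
Your proof is correct, and it takes a genuinely different (and in fact shorter) route than the paper's. The paper truncates both spectral decompositions, shows the finite truncated products $W_1^K \odot W_2^M$ are positive semidefinite, establishes separately that $L^2$-limits of positive semidefinite kernels are positive semidefinite, and then passes to the limit twice. You instead test directly against an arbitrary bounded $f$, absorb $f(x)f(y)W_1(x,y)$ into a single $L^2(\Omega\times\Omega)$ function $h$, and use only the $L^2$ convergence of the spectral expansion of $W_2$ to write $\inner{f}{T_{W_1\odot W_2}f}=\sum_j\mu_j\inner{fg_j}{T_{W_1}(fg_j)}$, after which each term is manifestly non-negative because $\mu_j\ge0$ (by \cref{lem:PSD eigenvalues}), $g_j$ is bounded for $\mu_j\ne0$, and $W_1$ is positive semidefinite on bounded test functions. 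This avoids the double truncation, avoids the embedded lemma about $L^2$-limits of PSD kernels, and uses the spectral decomposition for $W_2$ only---for $W_1$ you use nothing beyond the definition of positive semidefiniteness. Both arguments do rely on the boundedness of eigenfunctions with non-zero eigenvalue, which you correctly flag. The one point worth stating explicitly rather than leaving implicit is that you need not worry about absolute convergence or rearrangement of $\sum_j\mu_j\inner{h}{g_j\otimes g_j}$: the partial sums converge to $\inner{h}{W_2}_{L^2}$ by Cauchy--Schwarz against the $L^2$-tail of the spectral expansion, and since every term is non-negative, the limit is non-negative. With that small remark added, your argument is a clean improvement on the one in the paper.
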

The analogous statement for matrices follows immediately from the fact that the tensor product preserves positive-semidefiniteness, since the Hadamard product of two matrices is a principal submatrix of their tensor product. It does not seem that one can make such an argument for kernels, so our proof of \cref{lem:hadamard PSD} is rather long and is given in \cref{sec:graphon appendix}.

\section{Gluing operations}\label{sec:gluing}
In this section, we prove that various graphs are KNRS and KNRS-forcing. We begin with a strengthening of Reiher's theorem \cite{MR3171777}, proving that all cycles are KNRS-forcing.

\begin{theorem}\label{thm:cycle forcing}
    For any $k \ge 3,$ the $k$-cycle is KNRS-forcing.
\end{theorem}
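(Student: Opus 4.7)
The plan is to pass to the graphon setting via \cref{lem:graphon equivalence}(b), reducing to showing that every $p$-locally dense graphon $W$ satisfies $t(C_k,W) \geq p^k$, with equality only if $W \equiv p$ almost everywhere. The engine of the proof is the auxiliary inequality $a_m := \iint_{\Omega \times \Omega} W^{\circ m}(x,y)\dd x\dd y \geq p^m$ for every $m \geq 1$, which I would prove by induction on $m$. Writing $d^{(k)} := T_W^k \mathbf{1}$, so that $a_k = \inner{\mathbf{1}}{d^{(k)}}$, the base case $a_1 \geq p$ is \cref{def:locally dense graphon} with $U = \Omega$. For even $m = 2k$, using the identity $a_{2k} = \int (d^{(k)})^2$ and Cauchy--Schwarz gives $a_{2k} = \norm{d^{(k)}}^2 \geq \inner{\mathbf{1}}{d^{(k)}}^2 = a_k^2$. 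For odd $m = 2k+1$, applying \cref{lem:reiher graphon} to the non-negative function $f = d^{(k)}$ yields $a_{2k+1} = \iint f(x) W(x,y) f(y) \dd x\dd y \geq p \norm{f}_1^2 = p a_k^2$. The induction closes immediately.

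From this the lower bound $t(C_k,W) \geq p^k$ follows easily. For even $k = 2m$, $t(C_{2m},W) = \iint W^{\circ m}(x,y)^2\dd x\dd y \geq a_m^2 \geq p^{2m}$ by another Cauchy--Schwarz. For odd $k = 2m+1$, integrating out the interior vertices of two length-$m$ paths sharing a common endpoint $x$ gives
\[ t(C_{2m+1}, W) = \iiint W^{\circ m}(x,y)\,W(y,z)\,W^{\circ m}(x,z) \dd x\dd y\dd z. \]
For each fixed $x$, \cref{lem:reiher graphon} applied to $W^{\circ m}(x,\cdot)$ bounds the inner double integral below by $p\,(d^{(m)}(x))^2$, and integrating in $x$ with Cauchy--Schwarz yields $t(C_{2m+1},W) \geq p\int (d^{(m)})^2 \geq p\,a_m^2 \geq p^{2m+1}$.

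For the forcing direction, I would assume $t(C_k,W) = p^k$ and trace equality throughout the chain. The first step is to deduce that $W$ is $p$-regular: for odd $k = 2m+1$, tightness of the outer Cauchy--Schwarz forces $d^{(m)}$ to be the constant $p^m$, and descending the $a_m = p^m$ chain via halving eventually yields $d^{(r)} = p^r$ for some odd $r$; since $\lambda \mapsto \lambda^r$ is injective on $\R$ for odd $r$, the spectral theorem forces $T_W\mathbf{1} = p\mathbf{1}$ (the even-$k$ case is analogous, with Perron--Frobenius ruling out a $-p$ eigenvalue). Then \cref{cor:copositive-regular} tells us $W - p$ is positive semidefinite. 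The tightness in Reiher's lemma applied to $g_x(\cdot) := W^{\circ m}(x,\cdot)$ gives $\inner{g_x}{T_{W-p}\, g_x} = 0$ for a.e.\ $x$, which PSDness upgrades to $T_{W-p}\, g_x = 0$, i.e., $T_W^m(T_W - p) = 0$. Using that $\ker T_W^m = \ker T_W$ for compact self-adjoint $T_W$, this collapses to $T_W^2 = pT_W$, so every eigenvalue of $T_W$ lies in $\{0,p\}$; matching $\sum \lambda_i^k = p^k$ then pins the multiplicity of $p$ at one, and since $\mathbf{1}$ lives in the $p$-eigenspace, we conclude $W \equiv p$. The main obstacle is this forcing step: the induction interleaves Cauchy--Schwarz tightness (which yields rigid pointwise constancy) with Reiher tightness (which merely kills a quadratic form), so one must first bootstrap to $p$-regularity in order to invoke \cref{cor:copositive-regular}, after which Reiher tightness can be promoted into the rigid operator identity $T_W^m(T_W - p) = 0$ that drives the spectral conclusion.
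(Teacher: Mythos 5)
Your proof is correct and reaches the same endpoint as the paper's---establishing $p$-regularity, invoking \cref{cor:copositive-regular} to get that $W-p$ is positive semidefinite, and then using the eigenvalue decomposition---but the internal machinery is genuinely different. The paper handles even $k$ by citing Chung--Graham--Wilson, and for odd $k=2\ell+1$ it establishes $p$-regularity by appealing directly to Sidorenko's forcing result for paths (that $t(P_\ell,W)=p^\ell$ forces $W$ to be $p$-regular); once regularity is in hand, it finishes immediately with $t(C_k,W)=\sum_i\lambda_i^k=p^k+\sum_{i\ge2}\lambda_i^k$. Your version is more self-contained: you re-derive the path inequality $a_m\geq p^m$ from scratch via a Cauchy--Schwarz/Reiher induction, extract $p$-regularity from the equality chain via a halving descent, and handle even $k$ by the same method rather than by citation. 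This buys you a uniform treatment at the cost of a longer argument, and your final step (Reiher tightness $\Rightarrow$ $T_{W-p}g_x=0$ $\Rightarrow$ operator identity) is a somewhat roundabout substitute for the paper's one-line eigenvalue sum. Two minor technical remarks: first, the identity you write as ``$T_W^m(T_W-p)=0$'' should be understood as $T_{W-p}\circ T_W^m=0$, i.e.\ $(W-p)\circ W^{\circ m}=0$ a.e., since $T_{W-p}=T_W-pJ$ where $J$ is the rank-one averaging operator, not $T_W-p\cdot\mathrm{Id}$; the eigenvalue conclusion still follows (and in fact more directly, since $T_W^{m+1}=p^{m+1}J$ has a single nonzero eigenvalue). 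Second, the halving descent works precisely because each even step uses Cauchy--Schwarz tightness, which yields pointwise constancy of $d^{(s)}$, and the descent necessarily terminates at an odd exponent (possibly $1$), where the injectivity of $\lambda\mapsto\lambda^r$ closes the loop; you state this correctly but it is worth noting that the Reiher-tightness steps in the descent are not the ones doing the work---only the Cauchy--Schwarz steps propagate constancy.
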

\begin{proof}
    If $k$ is even, the result was proved in the seminal paper of Chung, Graham and Wilson~\cite{chung1989quasi}. Hence, we will assume that $k$ is odd and write $k = 2\ell + 1.$ By \cref{lemit:KNRS-forcing}, it is enough to prove that for any $p$-locally dense graphon $W,$ it holds that $t(H, W) = p^{e(H)}$ if and only if $W = p$ a.e. So let $W$ be a $p$-locally dense graphon with $t(H, W) = p^{e(H)}.$

    First we show that $W$ is $p$-regular. Indeed, if not, then by a result of Sidorenko~\cite{sidorenko1993correlation}, we have $\norm{W^{\circ \ell}}_1 = t(P_{\ell}, W) > p^{\ell},$ where $P_\ell$ is the path with $\ell$ edges. In other words, if $W$ is not $p$-regular, we have strictly more $\ell$-paths than expected. Note that we may count $k$-cycles by counting pairs of $\ell$-paths starting from the same vertex and whose endpoints are joined by an edge. Formally,
    \begin{align*}
        t(C_k, W) &= \int_\Omega \left(\int_{\Omega \times \Omega} W^{\circ \ell} (x, y) W^{\circ \ell} (x, z) W(y, z) \dd y \dd z\right) \dd x\\
        &\ge \int_{\Omega} (d_{W^{\circ \ell}}(x))^2 p \dd x \ge \norm{W^{\circ \ell}}_1^2 p \ge p^{2\ell+1} = p^k,  
    \end{align*}    
    where in the first inequality we used \cref{lem:reiher graphon} with the weight function $w(y) = W^{\circ \ell}(x, y),$ in the second inequality we used Cauchy-Schwarz and in the third inequality we used that the $\ell$-path is Sidorenko.

    By assumption $t(C_k, W) = p^k$ so each of the above inequalities must be tight. In particular, $t(P_\ell, W) = p^\ell,$ which by a result of Sidorenko~\cite{sidorenko1993correlation}, implies that $W$ is $p$-regular.
    
    Let us write $W = p + M.$ Note that $\inner{M}{1} = 0$ so $1$ is an eigenfunction of $W$ with eigenvalue $p$. Let $1 = f_1, f_2, f_3, \dots, $ be eigenfunctions of $W$ with corresponding eigenvalues $p = \lambda_1, \lambda_2, \lambda_3, \dots$ such that $\{f_i\}_{i \in \mathbb{N}}$ forms an orthornomal basis. Note that for $i \ge 2,$
    \[ \inner{W}{f_i} = \inner{p}{f_i} + \inner{M}{f_i} = \inner{M}{f_i}, \]
    since $f_i$ is orthogonal to the all one function. Hence, $f_i$ is also an eigenfunction of $M$ with the same eigenvalue. By~\cref{cor:copositive-regular}, $M$ is positive semidefinite, which implies $\lambda_i \ge 0,$ for all $i \ge 1.$ Finally, by \cref{thm:cycle-count}, we have 
    \[ t(C_k, W) = \sum_{i=1}^\infty \lambda_i^k = p^k + \sum_{i=2}^\infty \lambda_i^k \ge p^k. \]
    Equality implies that $\lambda_i = 0$ for all $i \ge 2,$ and thus $W = p$ a.e. as claimed.
\end{proof}

We remark that there is an alternative way of viewing the proof of \cref{thm:cycle forcing}. For a kernel $W$ and an integer $k \geq 3$, let $f_k(W) = \sum_{i=1}^\infty \lambda_i^k$, where $\{\lambda_i\}_{i=1}^\infty$ are the eigenvalues of $W$. From \cref{thm:cycle-count}, we know that $f_k(W)=t(C_k,W)$. The function $f_k$, when $k$ is odd, is not a convex function on the space of all kernels, but it is strictly convex when restricted to the cone of PSD kernels; this is closely related to the statement that the Schatten $p$-norm, for $p>1$, is strictly convex. A basic fact of convex optimization is that strictly convex functions have a unique local minimum (which is also the global minimum). This implies that $t(C_k,{}\cdot{})$ has a unique local and global minimum in the cone of PSD kernels, hence that a stability result holds for the count of $C_k$. As it is not hard to identify this minimum, this stability result in turn implies that $C_k$ is KNRS-forcing. 

While this connection may appear somewhat superficial---and we found it easiest to prove \cref{thm:cycle forcing} without speaking about strictly convex functions at all---we are hopeful that further such connections will be uncovered. In particular, it seems possible that techniques from convex and semidefinite optimization could be used to prove results around \cref{conj:KNRS}. 

We now recall the key definition of our gluing operation (see Figure~\ref{fig:products} for an illustration).

\gluingdef*

\begin{theorem}\label{thm:main gluing}
    Suppose that $H_1, H_2$ are KNRS, $I$ is and independent set in $H_1$ and $a \in V(H_1) \setminus I$. Then $H_1 \ltimes_I^a H_2$ is KNRS. Moreover, if $H_1$ is KNRS-forcing or $H_2$ is density-forcing and $H_1 \ltimes_I^a K_2$ is KNRS-forcing, then $H_1 \ltimes_I^a H_2$ is KNRS-forcing.
\end{theorem}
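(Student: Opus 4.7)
My plan is to prove both statements by a two-step integration argument that mirrors the recursive structure of the gluing, applying \cref{lem:count-with-weights} in the inner layer and the KNRS (or KNRS-forcing) hypothesis on $H_1$ in the outer layer. By \cref{lem:graphon equivalence}, I may work with a $p$-locally dense graphon $W$ on $(\Omega, \mu)$. Writing $H \coloneqq H_1 \ltimes_I^a H_2$, fixing $y = (y_i)_{i \in I} \in \Omega^I$, and defining
\[ F(x, y) \coloneqq t\bigl(H_1, W \mid x_a = x,\; x_I = y\bigr), \]
I would first integrate out, within each of the $v(H_2)$ copies of $H_1$, the vertices outside $I \cup \{a\}$, so that
\[ t(H, W) = \int_{\Omega^I} \int_{\Omega^{V(H_2)}} \prod_{v \in V(H_2)} F(x_v, y) \prod_{uv \in E(H_2)} W(x_u, x_v) \prod_{v \in V(H_2)} \dd x_v \prod_{i \in I} \dd y_i. \]

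For the KNRS part, I would fix $y$ and apply \cref{lem:count-with-weights} with weight $w_y(x) \coloneqq F(x, y)$: since $H_2$ is KNRS, the inner integral is at least $\norm{w_y}_1^{v(H_2)} p^{e(H_2)}$. Since $\norm{w_y}_1 = t(H_1, W \mid x_I = y)$ and hence $\int \norm{w_y}_1 \dd y = t(H_1, W)$, Jensen's inequality on $s \mapsto s^{v(H_2)}$ combined with the KNRS property of $H_1$ then give
\[ t(H, W) \;\ge\; p^{e(H_2)} \int_{\Omega^I} \norm{w_y}_1^{v(H_2)} \dd y \;\ge\; p^{e(H_2)} t(H_1, W)^{v(H_2)} \;\ge\; p^{e(H_2) + v(H_2) e(H_1)} \;=\; p^{e(H)}. \]

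For the forcing part, I would assume $t(H, W) = p^{e(H)}$, which forces equality in all three steps above. If $H_1$ is KNRS-forcing, equality in the third step yields $t(H_1, W) = p^{e(H_1)}$, so $W = p$ a.e.\ by \cref{lemit:KNRS-forcing}. Otherwise, assuming $H_2$ is density-forcing and $H_1 \ltimes_I^a K_2$ is KNRS-forcing, Jensen equality gives $\norm{w_y}_1 = t(H_1, W) = p^{e(H_1)} > 0$ for a.e.\ $y$, and the change-of-measure argument behind \cref{lem:change-measure,lem:count-with-weights} recasts equality in \cref{lem:count-with-weights} as equality in the density-forcing inequality for $H_2$ on the probability space $(\Omega, \nu_y)$ with $\dd \nu_y = (w_y / \norm{w_y}_1) \dd \mu$. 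By \cref{lemit:density-forcing}, this would force
\[ \iint_{\Omega \times \Omega} W(u, v)\, w_y(u)\, w_y(v) \dd u \dd v \;=\; p \norm{w_y}_1^2 \qquad \text{for a.e. } y. \]
Integrating over $y$, the left-hand side becomes $t(H_1 \ltimes_I^a K_2, W)$, while the right-hand side equals $p \cdot t(H_1, W)^2 = p^{2e(H_1) + 1} = p^{e(H_1 \ltimes_I^a K_2)}$. The KNRS-forcing property of $H_1 \ltimes_I^a K_2$ then delivers $W = p$ a.e.

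The main conceptual obstacle---and the reason the hypothesis ``$H_1 \ltimes_I^a K_2$ is KNRS-forcing'' is the natural one---is recognizing that the density-forcing output on the auxiliary measures $\nu_y$, once integrated back over $y$, reassembles precisely into the graph $H_1 \ltimes_I^a K_2$: the factor $W(u,v)$ inserted between two copies of $w_y = F(\cdot, y)$ is exactly the edge promoting the trivial gluing $H_1 \ltimes_I^a \overline{K_2}$ (which corresponds to $\int \norm{w_y}_1^2 \dd y$) into $H_1 \ltimes_I^a K_2$. Identifying this correct auxiliary graph is the key bookkeeping step; the only other care needed is the routine measure-theoretic check that $\norm{w_y}_1 > 0$ almost everywhere, which is automatic from the Jensen equality.
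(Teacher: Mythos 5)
Your proposal is correct and follows essentially the same route as the paper: the same weight function $w_y = t(H_1, W \mid x_a = {}\cdot{}, x_I = y)$, the same use of \cref{lem:count-with-weights} in the inner integral and Jensen plus the KNRS hypothesis on $H_1$ in the outer integral, and the same analysis of the equality case via \cref{lem:change-measure} and density-forcing of $H_2$ reassembling into $t(H_1 \ltimes_I^a K_2, W) = p^{e(H_1 \ltimes_I^a K_2)}$. The only difference is purely notational.
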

\begin{proof}
    We first prove that $H_1 \ltimes_I^a H_2$ is KNRS and the forcing statement will follow by examining the equality case. Note that $e(H_2 \ltimes_I^a H_2) = v(H_2) e(H_1) + e(H_2)$. Let $W$ be a $p$-locally dense graphon. Let $k = |I|$ and let us label the vertices of $I$ by $1, \dots, k$. For $z_1, \dots, z_k, y \in \Omega,$ let $w_{z_1, \dots, z_k}(y) = t(H_1, W \mid x_1 = z_1, \dots, x_k = z_k, x_a = y),$ that is, $w_{x_1,\dots, x_k}(y)$ counts the number of copies of $H_1$ with vertex $i$ embedded into $x_i$ for $i \in [k]$ and vertex $a$ embedded into $y.$ Observe that
    \begin{align*}
        t(H_1 \ltimes_I^a H_2, &W \mid x_1 = z_1, \dots, x_k = z_k)\\ &= \int_{\Omega^{V(H_2)}} \prod_{v \in V(H_2)} w_{z_1, \dots, z_k}(x_v) \prod_{uv \in E(H_2)} W(x_u, x_v) \prod_{v \in V(H_2)} \dd x_v\\
        &\ge \norm{w_{z_1, \dots, z_k}}_1^{v(H_2)} p^{e(H_2)},
    \end{align*}
    where in the inequality we used \cref{lem:count-with-weights} and the assumption that $H_2$ is KNRS.
    Since $H_1$ is KNRS,
    \[ t(H_1, W) = \int_{\Omega^k} \norm{w_{z_1, \dots, z_k}}_1 \prod_{i=1}^k \dd z_i \ge p^{e(H_1)}. \]
    Integrating over the choice of $z_1, \dots, z_k,$ applying~\cref{lem:count-with-weights} and Jensen's inequality for the function $x^{v(H_2)}$ on $[0, \infty),$ we have
    \begin{align*}
        t(H_1 \ltimes_I^a H_2, W) &= \int_{\Omega^k} t(H_1 \ltimes_I^a H_2, W \mid x_1 = z_1, \dots, x_k = z_k) \prod_{i=1}^k \dd z_i\\ &\ge \int_{\Omega^k} \norm{w_{z_1, \dots, z_k}}_1^{v(H_2)} p^{e(H_2)} \prod_{i=1}^k \dd z_i
        \ge t(H_1, W)^{v(H_2)} p^{e(H_2)} \ge p^{v(H_2)e(H_1) + e(H_2)},      
    \end{align*}
    showing that $H_1 \ltimes_I^a H_2$ is KNRS.

    Finally, to show the forcing statements, suppose that $t(H_1 \ltimes_I^a H_2, W) = p^{v(H_2) e(H_1) + e(H_2)}.$ Then, in the above chain of inequalities, equality holds throughout.
    In particular, $t(H_1, W) = p^{e(H_1)}$. Since  $H_1$ is KNRS-forcing, we obtain that $W = p$ a.e. Hence $H_1 \ltimes_I^a H_2$ is also KNRS-forcing.
    
    Finally, let us show that if $H_2$ is density-forcing and $H_1 \ltimes_I^a K_2$ is KNRS-forcing, then $H_1 \ltimes_I^a H_2$ is KNRS-forcing. In this case, we have
    \begin{align}
        \notag t(H_1 \ltimes_I^a H_2, W \mid x_1 = z_1, \dots, x_k = z_k) = \norm{w_{z_1, \dots, z_k}}_1^{v(H_2)} p^{e(H_2)},\\ \text{ for almost all } z_1, \dots, z_k \text{ such that } \norm{w_{z_1, \dots, z_k}}_1 > 0.  \label{eq:count-from-x}
    \end{align}
    Using~\eqref{eq:count-from-x} and the equality case of Jensen's inequality, it follows that
    \begin{equation} \label{eq:one-norms-almost-all-x}
        \norm{w_{z_1, \dots, z_k}}_1 = p^{e(H_1)}, \text{ for almost all } z_1, \dots, z_k.
    \end{equation}
    Consider $z_1, \dots, z_k$ for which $\norm{w_{z_1, \dots, z_k}}_1 = p^{e(H_1)}$ and $t(H_1 \ltimes_I^a H_2, W \mid x_1 = z_1, \dots, x_k = z_k) = p^{v(H_2) e(H_1) + e(H_2)}.$ Let $\nu_{z_1, \dots, z_k}(y) \coloneqq w_{z_1, \dots, z_k}(y) / \norm{w_{z_1, \dots, z_k}}_1$ and note that $\nu_{z_1, \dots, z_k}(y)$ is a probability measure on $W$. Observe that 
    \begin{align*}
        t(H_1 \ltimes_I^a H_2, &W, \mid x_1 = z_1, \dots, x_k = z_k)\\
        &= \int_{\Omega^{V(H_2)}} \prod_{v \in V(H_2)} w_{z_1, \dots, z_k}(x_v) \prod_{uv \in E(H_2)} W(x_u, x_v) \prod_{v \in v(H_2)} \dd \mu (x_v) \\
        &= \norm{w_{z_1, \dots, z_k}}_1^{v(H_2)} \cdot t_{\nu_{z_1, \dots, z_k}}(H_2, W),
    \end{align*}
    implying that $t_{\nu_{z_1, \dots, z_k}}(H_2, W) = p^{e(H_2)}.$ By~\cref{lem:change-measure}, the graphon $W$ is $p$-locally dense with respect to the measure $\nu_{z_1, \dots, z_k}$. Since $H_2$ is density-forcing, we have
    
    \begin{align*}
        p &= \int_{\Omega \times \Omega} W(y, y') \dd \nu_{z_1, \dots, z_k}(y) \dd \nu_{z_1, \dots, z_k}(y')\\
        &= \frac{1}{\norm{w_{z_1, \dots, z_k}}_1^2} \int_{\Omega \times \Omega} w_{z_1, \dots, z_k}(y) w_{z_1, \dots, z_k}(y') W(y, y') \dd \mu(y) \dd\mu(y')\\
        &= \frac{1}{\norm{w_{z_1, \dots, z_k}}_1^2} t(H_1 \ltimes_I^a K_2, W \mid x_1 = z_1, \dots, x_k = z_k)\\
        &= p^{-2e(H_1)} t(H_1 \ltimes_I^a K_2, W \mid x_1 = z_1, \dots, x_k = z_k).
    \end{align*}
    In other words, for almost all $z_1, \dots, z_k$, we have $t(H_1 \ltimes_I^a K_2, W \mid x_1 = z_1, \dots, x_k = z_k) = p^{2e(H_1) + 1},$ so
    \[ t(H_1 \ltimes_I^a K_2, W) = \int_{\Omega^k} t(H_1 \ltimes_I^a K_2, W \mid x_1 = z_1, \dots, x_k = z_k) \prod_{i=1}^k \dd z_i = p^{2e(H_1) + 1} = p^{e(H_1 \ltimes_I^a K_2)} . \]
    Because $H_1 \ltimes_I^a K_2$ is KNRS-forcing, we obtain that $W = p$ a.e., so $H_1 \ltimes_I^a H_2$ is KNRS-forcing as well.
\end{proof}

As a special case of \cref{thm:main gluing}, if $H_1$ is KNRS-forcing, then $H_1 \ltimes_I^a K_2$ is KNRS-forcing as well.
The converse, however, need not hold, e.g.\ $K_2$ is clearly not KNRS-forcing, but denoting by $x, y$ the endpoints of $K_2,$ the graph $K_2 \ltimes_{\{x\}}^y K_2 \cong K_3$ is KNRS-forcing.

\begin{corollary}
    For any $k \ge 3,$ the $k$-clique and the $k$-wheel are KNRS and KNRS-forcing.
\end{corollary}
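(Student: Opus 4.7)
The plan is to deduce both claims by combining Theorem~\ref{thm:cycle forcing} (cycles are KNRS-forcing) with Theorem~\ref{thm:main gluing} in two short inductions. It suffices to prove the KNRS-forcing statement: reading Lemma~\ref{lem:graphon equivalence} on the graphon side, KNRS-forcing implies KNRS (if $W = p$ a.e.\ then $t(H,W)=p^{e(H)}$, otherwise $t(H,W)>p^{e(H)}$), and it also implies density-forcing (if $\iint W > p$ then in particular $W \not\equiv p$ a.e., so KNRS-forcing yields $t(H,W) > p^{e(H)}$). This latter implication is what I will use to feed into the second branch of the forcing statement in Theorem~\ref{thm:main gluing}, since $H_1 = K_2$ will never itself be KNRS-forcing.

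For $k$-cliques I would induct on $k \ge 3$. The base case $K_3 = C_3$ is Theorem~\ref{thm:cycle forcing}. For the inductive step, let $V(K_2) = \{a,b\}$, set $I = \{b\}$, and take $H_2 = K_{k-1}$. Unpacking the definition, $K_2 \ltimes_{\{b\}}^a K_{k-1}$ consists of $k-1$ copies of $K_2$ glued along $b$ (a star centered at $b$ with $k-1$ leaves), with a $K_{k-1}$ placed on those leaves; this is exactly $K_k$. Since $H_1 = K_2$ is not KNRS-forcing, the first branch of Theorem~\ref{thm:main gluing} is unavailable, but the second branch applies: $H_2 = K_{k-1}$ is KNRS-forcing by induction, hence density-forcing, and $H_1 \ltimes_I^a K_2 = K_2 \ltimes_{\{b\}}^a K_2 = K_3$ is KNRS-forcing by the base case. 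Therefore $K_k$ is KNRS-forcing.

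For the $k$-wheel, which I take to be $C_k + K_1$ (an apex joined to every vertex of $C_k$), I would use the same $H_1 = K_2$, $I = \{b\}$, now with $H_2 = C_k$. The $k$ copies of $K_2$ glued at $b$ form an apex joined to $k$ leaves, and placing $C_k$ on those leaves gives $K_2 \ltimes_{\{b\}}^a C_k = C_k + K_1$. By Theorem~\ref{thm:cycle forcing}, $H_2 = C_k$ is KNRS-forcing and hence density-forcing, and $H_1 \ltimes_I^a K_2 = K_3$ is KNRS-forcing as before, so Theorem~\ref{thm:main gluing} yields that the $k$-wheel is KNRS-forcing. The only subtlety, already noted above, is the systematic use of the density-forcing branch of Theorem~\ref{thm:main gluing}, since $H_1 = K_2$ itself is not KNRS-forcing; beyond recognizing the right gluings there is no substantive obstacle.
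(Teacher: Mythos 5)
Your proof is correct and follows essentially the same route as the paper: both induct on $k$ (base case $K_3 = C_3$), realize $K_k$ as $K_2 \ltimes_{\{b\}}^a K_{k-1}$ and the $k$-wheel as $K_2 \ltimes_{\{b\}}^a C_k$, and invoke the second (density-forcing) branch of Theorem~\ref{thm:main gluing} together with $K_2 \ltimes_{\{b\}}^a K_2 \cong K_3$ being KNRS-forcing. Your one refinement is to make explicit the easy implication ``KNRS-forcing $\Rightarrow$ density-forcing'' (needed because the forcing branch asks for $H_2$ density-forcing), which the paper uses silently; this is a correct and worthwhile clarification rather than a different argument.
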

\begin{proof}
The proof of the first statement is by induction on $k$. 
By \cref{thm:cycle forcing},
$K_3$ is KNRS and KNRS-forcing and assume, by induction, that $K_{k-1}$ is KNRS and KNRS-forcing.
Let $x,y$ be the two endpoints of $K_2$. 
We have that $K_k \cong K_2 \ltimes_{\{x\}}^y K_{k-1}$ for all $k$, showing by the previous theorem that $K_k$ is KNRS. Moreover, as $K_2 \ltimes_{\{x\}}^y K_2 \cong K_3$ is KNRS-forcing and $K_{k-1}$ is KNRS-forcing by induction, we have that $K_k$ is KNRS-forcing. 

Similarly, the wheel $W_k$ can be constructed as $K_2 \ltimes_{\{x\}}^y C_k$. Since $C_k$ is KNRS and KNRS-forcing by \cref{thm:cycle forcing}, and since $K_2 \ltimes_{\{x\}}^y K_2 \cong K_3$ is KNRS-forcing, we conclude that $W_k$ is KNRS and KNRS-forcing.
\end{proof}

\section{On the regular-KNRS conjecture}\label{sec:regular-KNRS}
In this section we prove \cref{conj:regular-KNRS} for a family of graphs which includes many graphs for which \cref{conj:KNRS} is not known. Note that if $W$ is a $p$-regular $p$-locally dense graphon, then by \cref{cor:copositive-regular}, $W - p$ is a $0$-regular, positive semidefinite kernel. For any graph $H,$ by expanding \eqref{eq:hom-density}, we obtain
\begin{equation}
    \label{eq:psd-expansion}
    t(H, W) = t(H, p + (W-p)) = \sum_{H' \subseteq H} p^{e(H) - e(H')} t(H', W-p),
\end{equation}
where $H'$ runs over all spanning subgraphs of $H$. This motivates the following definition.

\begin{definition}
    We say that a graph $H$ is \emph{PSD-nonnegative} if for any positive semidefinite $0$-regular kernel $W,$ it holds that $t(H, W) \ge 0.$ We say that $H$ is \emph{hereditarily PSD-nonnegative} if every spanning subgraph of $H$ is PSD-nonnegative.
\end{definition}

We remark that a stronger notion has been studied in the literature. Namely, a graph $H$ is said to be \emph{positive} if $t(H,W) \geq 0$ for all kernels $W$. Clearly, every positive graph is in particular PSD-nonnegative. The positive graph conjecture of Antol\'in Camarena--Cs\'oka--Hubai--Lippner--Lov\'asz \cite{MR3425981} predicts a precise characterization of positive graphs (see \cite{CoLeVe,MR4444712} for recent progress on this conjecture). However, for our purposes we need the notion of hereditary PSD-nonnegativity, which does not seem to have a direct connection to the theory of positive graphs.

Equation~\eqref{eq:psd-expansion} immediately implies the following.
\begin{proposition} \label{prop:all-subgraphs-psd}
    Every hereditarily PSD-nonnegative graph satisfies \cref{conj:regular-KNRS}.
\end{proposition}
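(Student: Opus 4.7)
The plan is to combine the expansion~\eqref{eq:psd-expansion} with \cref{cor:copositive-regular} so that the desired inequality reduces to a termwise non-negativity statement.

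First I would reduce to the graphon setting: by a standard compactness argument (analogous to the passage in \cref{lem:graphon equivalence}), it suffices to prove $t(H,W) \ge p^{e(H)}$ for every $p$-regular, $p$-locally dense graphon $W$. Indeed, if \cref{conj:regular-KNRS} failed for some $H$, $p$, and $\varepsilon > 0$, one could extract a sequence $G_n$ of $(p, 1/n)$-locally dense, $(p, 1/n)$-nearly regular graphs with $t(H, G_n) < p^{e(H)} - \varepsilon$. Passing to a cut-convergent subsequence with limit graphon $W$, the $p$-local density condition passes to the limit (via \cref{lem:reiher graphon} applied to indicator functions) and the nearly regular condition with $1/n \to 0$ forces $d_W = p$ almost everywhere, producing a $p$-regular, $p$-locally dense graphon $W$ with $t(H, W) \le p^{e(H)} - \varepsilon$, which would contradict the graphon statement.

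With the reduction in place, let $W$ be $p$-regular and $p$-locally dense, and set $K \coloneqq W - p$. By \cref{cor:copositive-regular}, $K$ is positive semidefinite, and since $d_W = p$ a.e., it is also $0$-regular. The identity~\eqref{eq:psd-expansion} then writes $t(H, W)$ as a sum over spanning subgraphs $H' \subseteq H$ of $p^{e(H) - e(H')} t(H', K)$. The edgeless spanning subgraph on $V(H)$ contributes $p^{e(H)} \cdot 1 = p^{e(H)}$. Every other spanning subgraph $H'$ is, by definition, a spanning subgraph of $H$, hence PSD-nonnegative by the hereditary PSD-nonnegativity assumption, which gives $t(H', K) \ge 0$. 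Summing all terms yields $t(H, W) \ge p^{e(H)}$.

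The only part requiring genuine care is the graphon reduction, and specifically the step in which the nearly regular condition with $\delta \to 0$ forces the limit graphon to be exactly $p$-regular; this follows from the $L^1$-convergence of the degree functions along a cut-convergent subsequence. Once this reduction is in hand, the rest of the proof is essentially immediate from the expansion and the definitions, so the substantive content of the argument lives entirely in \cref{cor:copositive-regular} and in the hypothesis of hereditary PSD-nonnegativity rather than in any computational obstacle.
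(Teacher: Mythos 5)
Your core argument is identical to the paper's: set $W_0 = W - p$, invoke \cref{cor:copositive-regular} to get that $W_0$ is $0$-regular and PSD, expand via~\eqref{eq:psd-expansion}, isolate the edgeless subgraph's contribution of $p^{e(H)}$, and observe that every other term is nonnegative by hereditary PSD-nonnegativity. The paper states this in three lines and works entirely in the graphon setting, leaving the graph-to-graphon reduction implicit as standard; you make that reduction explicit, which is a reasonable thing to do.

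One small imprecision in your reduction: cut-norm convergence $\norm{W_{G_n} - W}_\square \to 0$ does \emph{not} by itself give $L^1$-convergence of the degree functions $d_{W_{G_n}} \to d_W$; it only gives $\int_S (d_{W_{G_n}} - d_W)\,\dd\mu \to 0$ for every measurable $S$, which is weaker. Fortunately, the conclusion $d_W = p$ a.e.\ still follows: combining this weak statement with the hypothesis that $d_{W_{G_n}}$ lies in $[p - 1/n, p+1/n]$ outside an exceptional set of measure $1/n$ rules out sets of positive measure where $d_W$ deviates from $p$ by any fixed $\eta > 0$ (test against $S = \{d_W > p + \eta\}$ and $S = \{d_W < p - \eta\}$). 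Also, the passage to the limit of the locally dense property is \cref{lem:graphon LD} rather than \cref{lem:reiher graphon}; the latter is about replacing indicators by fractional weights, not about limits. These are surface-level attribution issues rather than gaps in the argument, and the overall proof is correct and matches the paper's.
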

\begin{proof}    
    Suppose that $H$ is a hereditarily PSD-nonnegative graph. Let $W$ be a $p$-regular $p$-locally dense graphon and let $W_0 \coloneqq W - p.$ Note that by \cref{cor:copositive-regular}, $W_0$ is $0$-regular and positive semidefinite. By \eqref{eq:psd-expansion}, we have
    \begin{align*}
        t(H, W) &= \sum_{H' \subseteq H} p^{e(H) - e(H')} t(H', W_0) = p^{e(H)} + \sum_{H' \subseteq H, E(H') \neq \varnothing} p^{e(H) - e(H')} t(H', W_0) \ge p^{e(H)},
    \end{align*}
    where in the inequality we used the assumption that all spanning subgraphs of $H'$ are PSD-nonnegative.
\end{proof}

\begin{lemma} \label{lem:glue-psd}
    If $W$ is a positive semidefinite kernel and $W'$ is an arbitrary kernel on the same probability space $(\Omega, \Sigma, \mu)$, then the kernel $W' \circ W \circ W'$ is positive semidefinite.
\end{lemma}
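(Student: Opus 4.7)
The plan is to reduce positive semidefiniteness of $W' \circ W \circ W'$ directly to that of $W$ by moving one factor of $T_{W'}$ across the inner product, using that $T_{W'}$ is self-adjoint on $L^2$ because the kernel $W'$ is symmetric.

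First, I would take an arbitrary bounded real-valued function $f \colon \Omega \to \R$ and compute
\[
\inner{f}{T_{W' \circ W \circ W'} f} = \inner{f}{T_{W'} T_W T_{W'} f},
\]
using the fact, stated in \cref{sec:linear bilinear}, that the operator product of kernels corresponds to composition of the associated linear operators. Since $W'$ is a symmetric kernel, $T_{W'}$ is a self-adjoint bounded operator on $L^2(\Omega)$, so the last expression equals $\inner{T_{W'} f}{T_W (T_{W'} f)} = \inner{g}{T_W g}$, where $g \coloneqq T_{W'} f$.

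Next I would check that $g$ is in the class of functions to which the hypothesis ``$W$ is positive semidefinite'' applies, namely that $g$ is bounded and real-valued. Realness is immediate since $W'$ and $f$ are real-valued. Boundedness follows from the estimate
\[
\ab{g(x)} = \bab{\int_\Omega W'(x,y) f(y) \dd y} \leq \norm{W'}_\infty \norm{f}_\infty,
\]
which uses that $\Omega$ has total measure $1$ and that kernels are bounded by definition. Consequently, the PSD hypothesis on $W$ yields $\inner{g}{T_W g} \geq 0$, and therefore $\inner{f}{T_{W'\circ W\circ W'}f}\geq 0$. Since $f$ was arbitrary, this proves that $W'\circ W\circ W'$ is positive semidefinite.

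The argument is essentially a one-line manipulation once the functional-analytic framework of \cref{sec:linear bilinear} is in place; there is no real obstacle beyond being careful that ``bounded real-valued'' is preserved under application of $T_{W'}$, which is precisely why we need $W'$ to be a bounded measurable symmetric function rather than, say, an unbounded symmetric function.
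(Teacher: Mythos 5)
Your proof is correct and takes essentially the same approach as the paper: both reduce PSD-ness of $W'\circ W\circ W'$ to that of $W$ by absorbing one factor of $W'$ into the test function (your $g=T_{W'}f$ is exactly the auxiliary function $\Phi$ in the paper, which the paper writes out as an explicit fourfold integral rather than in operator notation). If anything you are slightly more careful: you test against arbitrary bounded real-valued $f$ and verify that $g$ remains bounded, whereas the paper's proof as written tests only nonnegative $f$ (evidently a typo, since the conclusion is positive semidefiniteness, not copositivity).
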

\begin{proof}
    Let $f \colon \Omega \rightarrow [0, \infty)$ be an arbitrary bounded measurable function. Setting $\Phi(x') = \int_\Omega f(x) W(x, x') \dd x,$ we have
    \begin{align*}
        \int_{\Omega \times \Omega} \!\!f(x) (W' \!\circ W \circ W')(x, y) f(y) \dd x \dd y = &\int_{\Omega^4}\! f(x) W'(x, x') W(x', y') W'(y', y) f(y) \dd x \dd x' \dd y' \dd y\\= &\int_{\Omega \times \Omega} \Phi(x') W(x', y') \Phi(y') \dd x' \dd y' \ge 0,        
    \end{align*}
    where in the last inequality we used that $W$ is positive semidefinite. As $f$ was arbitrary, the statement follows.
\end{proof}

Given a kernel $W$ and a graph $H$ with two distinct vertices labelled $a$ and $b,$ we define a new kernel $t_{a, b}(H, W)$ by setting
\[ t_{a, b}(H, W)(x, y) = t(H, W \mid x_a = x, x_b = y). \]

For $s_1, \dots, s_t$ with $s_1 \ge 0$ and $s_2, \dots, s_t \ge 1,$ let $\Theta(s_1, \dots, s_t)$ be the graph consisting of two distinct vertices $a$ and $b$ and $t$ internally vertex disjoint paths between $a$ and $b$, with the $i$th path having $s_i$ internal vertices, where if $s_1 = 0,$ there is an edge between $a$ and $b.$ We call $\Theta(s_1, \dots, s_t)$ a \emph{generalized $\Theta$-graph}.

\begin{lemma} \label{lem:theta-graphs-psd}
    Let $s_1 \ge 0, s_2 \dots, s_t \ge 1,$ be arbitrary, denote $\Theta = \Theta(s_1, \dots, s_t)$ and let $a, b$ be the two distinguished vertices of $\Theta.$ Then, for any positive semidefinite kernel $W,$ the kernel $t_{a, b}(\Theta, W)$ is positive semidefinite.
\end{lemma}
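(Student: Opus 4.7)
The plan is to exploit the internal vertex-disjointness of the $t$ paths to factor $t_{a,b}(\Theta,W)$ as a pointwise (i.e.\ Hadamard) product of kernels coming from the individual paths, and then invoke the PSD machinery already assembled in \cref{sec:linear bilinear}.

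First, I would observe that because the internal vertices of the different paths of $\Theta$ are pairwise disjoint and each internal vertex is adjacent only to vertices within its own path, Fubini's theorem lets me write
\[
t_{a,b}(\Theta, W)(x,y) \;=\; \prod_{i=1}^{t} t_{a,b}(P^{(i)}, W)(x,y),
\]
where $P^{(i)}$ denotes the subgraph of $\Theta$ consisting of the $i$th path from $a$ to $b$ (with $P^{(1)}$ being just the edge $ab$ when $s_1 = 0$). Next, for a path $P$ with $s$ internal vertices and endpoints labelled $a,b$, the definition of the operator product gives directly that $t_{a,b}(P, W)(x,y) = W^{\circ(s+1)}(x,y)$; this holds also in the degenerate case $s=0$ since $W^{\circ 1} = W$.

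Combining these identities, I obtain
\[
t_{a,b}(\Theta, W) \;=\; W^{\circ(s_1+1)} \odot W^{\circ(s_2+1)} \odot \cdots \odot W^{\circ(s_t+1)},
\]
i.e.\ the iterated Hadamard product of $t$ operator powers of $W$. By \cref{lem:walks-psd}, each factor $W^{\circ(s_i+1)}$ is a positive semidefinite kernel. Applying \cref{lem:hadamard PSD} inductively then yields that the full Hadamard product is positive semidefinite, completing the proof.

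The argument is really an assembly of tools already in the paper, so there is no genuine obstacle; the one conceptual point worth emphasizing is the identification of the correct kernel operation attached to each combinatorial operation on $\Theta$. Gluing graphs at a common vertex corresponds to Hadamard products of the associated one-point kernels, while concatenating along a new vertex corresponds to operator composition; $\Theta$-graphs are built by first concatenating along internal vertices to produce paths (giving the operator powers $W^{\circ(s_i+1)}$) and then gluing these paths simultaneously at the two endpoints $a$ and $b$ (giving the Hadamard product). Once this dictionary is made explicit, \cref{lem:walks-psd,lem:hadamard PSD} do all the real work.
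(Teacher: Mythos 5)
Your proposal is correct and follows essentially the same argument as the paper: identify $t_{a,b}(\Theta,W)$ as the Hadamard product $\bigodot_{i=1}^{t} W^{\circ(s_i+1)}$, then apply \cref{lem:walks-psd} and \cref{lem:hadamard PSD}. The only difference is that you spell out the intermediate factorization over the individual paths, which the paper states directly.
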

\begin{proof}
    Indeed, observe that
    \[ t_{a, b}(\Theta, W)(x, y) = \prod_{i=1}^t W^{\circ s_i+1}(x, y), \]
    so we can write
    \[ t_{a, b}(\Theta, W) = \odotprod_{i=1}^t W^{\circ s_i + 1}. \]
    By \cref{lem:walks-psd}, $W^{\circ s_i + 1}$ is positive semidefinite for all $i \in [t]$ and by \cref{lem:hadamard PSD}, the Hadamard product of two positive semidefinite kernels is positive semidefinite, implying that $t_{a, b}(\Theta, W)$ is positive semidefinite.
\end{proof}

\begin{lemma} \label{lem:degree-one-psd}
    If $H$ has a vertex of degree one, then $H$ is PSD-nonnegative.
\end{lemma}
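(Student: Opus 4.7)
The plan is to observe that the conclusion follows immediately from $0$-regularity alone, without even using positive semidefiniteness. I would proceed as follows. Let $v$ be a vertex of degree one in $H$, and let $u$ be its unique neighbor. Let $W$ be any $0$-regular kernel (bounded symmetric measurable $W : \Omega \times \Omega \to \R$ with $d_W(x) = \int_\Omega W(x,y)\,dy = 0$ for a.e.\ $x \in \Omega$).

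Since the only edge of $H$ incident to $v$ is the edge $uv$, Fubini's theorem (applicable because $W$ is bounded and $\mu$ is a finite measure, so the entire integrand is bounded) lets us pull out the integral over $x_v$:
\[
t(H, W) = \int_{\Omega^{V(H)\setminus\{v\}}} \!\!\left(\prod_{e \in E(H-v)} W(x_e)\right) \left(\int_\Omega W(x_u, x_v)\,dx_v\right) \prod_{w \in V(H-v)} dx_w.
\]
The inner integral equals $d_W(x_u)$, which is $0$ a.e.\ by the $0$-regularity hypothesis. Thus $t(H, W) = 0 \geq 0$, which verifies PSD-nonnegativity (indeed, much more: every $0$-regular kernel, PSD or not, gives $t(H, W) = 0$ when $H$ has a pendant vertex).

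There is no real obstacle to this argument; the only thing to be careful about is justifying the use of Fubini, which is immediate since $W$ is bounded and $\mu$ is a probability measure, so $\prod_{e \in E(H)} W(x_e)$ is a bounded measurable function on a finite product space. No appeal to \cref{cor:copositive-regular}, \cref{lem:walks-psd}, or \cref{lem:hadamard PSD} is needed—positive semidefiniteness is a red herring here, and the lemma is really a statement about the degree function vanishing.
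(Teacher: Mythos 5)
Your proof is correct and is essentially identical to the paper's: both observe that $t(H,W)=0$ for \emph{any} $0$-regular kernel by integrating out the pendant vertex $x_v$ first, yielding a factor of $d_W(x_u)=0$, so positive semidefiniteness plays no role.
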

\begin{proof}
    In fact we prove that for any $0$-regular kernel $W,$ it holds that $t(H, W) = 0.$ Let $v$ be a vertex of degree one in $H,$ let $u$ be its unique neighbour and let $H' = H \setminus \{v\}.$ Then for a $0$-regular kernel $W,$ we have
    \[ t(H, W) = \int_{\Omega} t(H', W \mid x_u = x) \int_\Omega W(x, y) \dd y \dd x = 0, \]
    since $\int_\Omega W(x, y) \dd y = 0$ for almost all $x$.
\end{proof}

\begin{theorem} \label{thm:theta-graph}
    Any generalized $\Theta$-graph is hereditarily PSD-nonnegative and, in particular, it satisfies \cref{conj:regular-KNRS}.
\end{theorem}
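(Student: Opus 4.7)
By Proposition~\ref{prop:all-subgraphs-psd}, it suffices to show that every spanning subgraph $H'$ of $\Theta = \Theta(s_1,\dots,s_t)$ is PSD-nonnegative. The key structural observation behind the plan is that deleting edges from a generalized $\Theta$-graph either produces a subgraph with a vertex of degree~$1$, or (up to isolated vertices) produces another generalized $\Theta$-graph on a subset of the original paths.

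To execute this, I would fix a spanning subgraph $H'$ and a PSD $0$-regular kernel $W$, and examine the degrees of the internal path vertices in $H'$. Each such vertex has degree at most~$2$ in $\Theta$, hence $0$, $1$, or~$2$ in $H'$. If any vertex of $H'$ has degree~$1$, then Lemma~\ref{lem:degree-one-psd} immediately gives $t(H',W)=0 \ge 0$. Otherwise every internal vertex has degree $0$ or~$2$, and walking along each path one sees that the two incident edges at each internal vertex must have the same status, forcing the $i$-th path to be either entirely present in $H'$ or entirely absent. Let $T \subseteq [t]$ index the paths kept by $H'$. If $T = \varnothing$, then $H'$ is edgeless and $t(H',W) = 1$. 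If $|T| = 1$, then both $a$ and $b$ have degree~$1$ in $H'$, reducing to the previous case so that $t(H',W)=0$.

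The main case is $|T| \ge 2$: the non-isolated part of $H'$ is then a generalized $\Theta$-graph $\Theta'$ on the paths indexed by $T$, and I would invoke Lemma~\ref{lem:theta-graphs-psd} to conclude that the kernel $K \coloneqq t_{a,b}(\Theta', W)$ is positive semidefinite. Integrating over the endpoints $a$ and $b$ last (isolated vertices contribute trivial factors of~$1$), one obtains
\[
t(H',W) \;=\; t(\Theta', W) \;=\; \iint_{\Omega \times \Omega} K(x,y) \dd x \dd y \;=\; \inner{\1}{T_K \1} \;\ge\; 0,
\]
with the final inequality coming from testing the positive semidefiniteness of $K$ against the constant function~$\1$. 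Combining the three cases yields PSD-nonnegativity of $H'$, and Proposition~\ref{prop:all-subgraphs-psd} concludes the proof.

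The argument is a clean case analysis, with the heavy lifting already packaged into Lemmas~\ref{lem:degree-one-psd} and~\ref{lem:theta-graphs-psd}, so I do not anticipate a real obstacle. The step requiring the most care is the structural dichotomy for paths in $H'$ with no degree-one vertex; one must verify this also in the degenerate case $s_1 = 0$, where the first ``path'' is merely the edge $ab$, but there the claim trivially reduces to the fact that the edge $ab$ is either present or absent in $H'$.
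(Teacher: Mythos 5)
Your proof is correct and follows essentially the same route as the paper's: reduce to spanning subgraphs via Proposition~\ref{prop:all-subgraphs-psd}, observe that any spanning subgraph of a generalized $\Theta$-graph either has a degree-one vertex (handled by Lemma~\ref{lem:degree-one-psd}) or is again a generalized $\Theta$-graph up to isolated vertices (handled by Lemma~\ref{lem:theta-graphs-psd}), and integrate the PSD kernel $t_{a,b}(\Theta',W)$ against $\1\otimes\1$. The only difference is that you spell out the structural dichotomy in detail (the ``walk along each path'' argument and the $|T|\in\{0,1\}$ subcases, the latter of which is in fact already subsumed by the degree-one case since $\deg(a)=\deg(b)=|T|$), whereas the paper simply asserts it.
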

\begin{proof}
    Let $H'$ be a subgraph of a generalized $\Theta$-graph. We need to show that $H'$ is PSD-nonnegative. Indeed, note that every spanning subgraph of a generalized $\Theta$-graph is itself a generalized $\Theta$-graph (potentially plus isolated vertices) or has a vertex of degree one. In the latter case, we are done by \cref{lem:degree-one-psd}, so assume that $H'$ is a generalized $\Theta$-graph plus isolated vertices. Homomorphism densities are multiplicative over disjoint unions, so we may remove the isolated vertices without changing $t(H',W)$. By \cref{lem:theta-graphs-psd}, $t_{a,b}(H', W)$ is positive semidefinite, so in particular,
    \[ t(H', W) = \int_{\Omega \times \Omega} t_{a, b}(H', W)(x, y) \dd x \dd y \ge 0, \]
    as needed.
\end{proof}

\begin{theorem} \label{thm:H0}
    Let $H$ be the graph with 6 vertices and 8 edges, where $V(H) = \mathbb{Z}_6$ and $E(H) = \{ \{i, i+1\} \, \colon \, 1 \le i \le 6\} \cup \{\{1,5 \} \{2, 4\} \}.$ $H$ is hereditarily PSD-nonngetive and thus it satisfies \cref{conj:regular-KNRS}.
\end{theorem}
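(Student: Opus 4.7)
The plan is to prove that $H$ is hereditarily PSD-nonnegative and then apply \cref{prop:all-subgraphs-psd} to conclude. The key structural observation is that the two degree-$2$ vertices of $H$, namely $3$ and $6$, each complete a triangle with a chord of the $C_4$ on $\{1,2,4,5\}$: concretely, $\{2,3,4\}$ and $\{1,5,6\}$ are the two triangles of $H$, joined by the $C_4$-edges $\{1,2\}$ and $\{4,5\}$. This structure suggests that spanning subgraphs of $H$ should be classified by the fate of vertices $3$ and $6$.

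For a spanning subgraph $H' \subseteq H$, if either of vertices $3$ or $6$ has degree $1$ in $H'$, then $t(H',W)=0$ by \cref{lem:degree-one-psd}; if either has degree $0$, it is isolated and contributes only a factor of $1$ to the homomorphism density (over a probability space). So it suffices to enumerate the spanning subgraphs in which each of $3$ and $6$ is either isolated or has degree $2$, and in which no other vertex has degree $1$. A short case analysis based on the status of $3,6$ and the four remaining edges $\{1,2\},\{4,5\},\{1,5\},\{2,4\}$ shows that, up to isomorphism and after deleting isolated vertices, the resulting list consists of: the empty graph; the cycles $C_3, C_4, C_5, C_6$; two disjoint triangles; $C_6$ plus a single chord (isomorphic to the generalized theta graph $\Theta(0,1,3)$); two triangles joined by a single edge; and $H$ itself. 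For each of these I would check PSD-nonnegativity directly: cycles give $t(C_k,W)=\sum_i \mu_i^k \geq 0$ by \cref{thm:cycle-count} since the eigenvalues $\mu_i$ of the PSD kernel $W$ are nonnegative; disjoint unions factor as products of nonnegative quantities; the theta graph case is handled by \cref{lem:theta-graphs-psd}; and the ``two triangles joined by an edge'' graph factors as $t(H',W)=\inner{\phi}{T_W \phi} \geq 0$, where $\phi(x):=t(K_3, W \mid x_v = x)$ is the triangle density rooted at $x$, so that integrating out each triangle independently produces this quadratic form.

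The main obstacle, and the most interesting case, is $H$ itself. Here I would integrate out vertices $3$ and $6$, which collapses each triangle into a ``thick'' edge carrying the kernel $K := W \odot W^{\circ 2}$. By \cref{lem:walks-psd} and \cref{lem:hadamard PSD}, $K$ is positive semidefinite. The resulting expression is
\[
t(H,W) \;=\; \int W(x_1,x_2)\, K(x_2,x_4)\, W(x_4,x_5)\, K(x_5,x_1) \,\dd x_1 \,\dd x_2 \,\dd x_4 \,\dd x_5 \;=\; \operatorname{tr}(T_W T_K T_W T_K).
\]
Expanding in an orthonormal eigenbasis $\{f_i\}$ of $T_W$ with eigenvalues $\mu_i \geq 0$ and using the self-adjointness of $T_K$ gives
\[
\operatorname{tr}(T_W T_K T_W T_K) \;=\; \sum_{i,j} \mu_i \mu_j\, \inner{f_i}{T_K f_j}^2 \;\geq\; 0,
\]
since every term is a product of nonnegative quantities. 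This finishes the verification that $H$ is hereditarily PSD-nonnegative. The heart of the argument is the interplay between the triangle structure of $H$ (which produces the Hadamard factor $W^{\circ 2}$) and the cyclic structure of the underlying $C_4$ on $\{1,2,4,5\}$ (which produces a trace formula amenable to the eigenvalue expansion).
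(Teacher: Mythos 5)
Your overall strategy---reducing to a finite check of spanning subgraphs and dispatching each via degree-one elimination, the theta-graph lemma, or disjoint-union factoring---matches the paper's. However, your claimed enumeration is incomplete: it misses the spanning subgraph obtained by deleting one triangle while keeping all four edges of the inner $C_4$. Concretely, dropping $\{5,6\}$ and $\{6,1\}$ while retaining $\{1,2\},\{2,4\},\{4,5\},\{1,5\},\{2,3\},\{3,4\}$ leaves (after removing the isolated vertex $6$) the graph $\Theta(0,1,2)$, i.e.\ $C_5$ with a chord of length two, which is not on your list. This omission is harmless since $\Theta(0,1,2)$ is a generalized $\Theta$-graph and hence is covered by \cref{lem:theta-graphs-psd}, but the claimed list should be corrected.

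For the central case $H'\cong H$, your route genuinely differs from the paper's. You collapse each triangle into $K=W\odot W^{\circ 2}$, obtain $t(H,W)=\operatorname{tr}(T_WT_KT_WT_K)$, and expand in the eigenbasis of $T_W$. The paper instead integrates toward the distinguished pair $(1,5)$, writes the two-point kernel as the Hadamard product $W\odot W^{\circ 2}\odot\bigl(W\circ(W\odot W^{\circ 2})\circ W\bigr)$, shows this is positive semidefinite via \cref{lem:glue-psd} and \cref{lem:hadamard PSD}, and concludes because a PSD kernel has nonnegative integral. Both are correct. Your trace argument is cleaner and actually needs less: $\operatorname{tr}\bigl((T_WT_K)^2\bigr)=\operatorname{tr}\bigl((T_W^{1/2}T_KT_W^{1/2})^2\bigr)=\bnorm{T_W^{1/2}T_KT_W^{1/2}}_{HS}^2\ge 0$ holds for any self-adjoint Hilbert--Schmidt $T_K$, so PSD-ness of $K$ is not used, only PSD-ness of $T_W$. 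In exchange, the paper's argument yields the stronger conclusion that the kernel $t_{1,5}(H,W)$ itself is PSD, not merely that its integral is nonnegative. One technical caveat worth adding to your write-up: the identity $t(H,W)=\operatorname{tr}(T_WT_KT_WT_K)$ should be justified via the Hilbert--Schmidt formula $\operatorname{tr}(AB)=\iint A(x,y)B(y,x)\dd x\dd y$ with $A=B=T_WT_K$, since the pointwise diagonal of the kernel of $T_WT_KT_WT_K$ is defined only up to measure zero, and the interchange with the infinite sum in your expansion is legitimate because all terms are nonnegative.
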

\begin{proof}
    See Figure~\ref{fig:H0} for an illustration of $H$ and its labelling. We need to show that every spanning subgraph of $H$ is PSD-nonnegative. Let $H'$ be a spanning subgraph of $H$. It is easy to verify that $H'$ satisfies at least one of the following cases which we handle in order.
    \begin{itemize}
        \item $H'$ has a vertex of degree one. Then $H'$ is PSD-nonnegative by \cref{lem:degree-one-psd}.
        \item $H'$ is a generalized $\Theta$-graph. Then $H'$ is PSD-nonnegative by \cref{thm:theta-graph}.
        \item $H'$ is a disjoint union of two triangles. For any kernel $W$, $t(H', W) = t(K_3, W)^2 \ge 0.$
        \item $H'$ is two vertex disjoint triangles connected by a single edge. Let $x, y$ denote the two vertices of degree 3 in $H'.$ Let $f(x) = t(K_3, W \mid x_1=x),$ i.e. $f(x)$ counts triangles containing $x.$ Then $t(H', W) = \iint f(x) W(x, y) f(y) \ge 0,$ since $W$ is positive semidefinite.
        \item $H' \cong H.$ Denote $F = H[\{1, 2, 3, 4, 5\}] \setminus \{\{1,5\}\}$. Let $W_3 = t_{a, b}(K_3, W)$ where $a, b$ are two distinct vertices of a triangle. Note that a triangle is a generalized $\Theta$-graph so by \cref{lem:theta-graphs-psd}, $W_3$ is a positive semidefinite kernel. Additionally, observe that $t_{1, 5}(F, W) = W \odot W_3 \odot W,$ so $t_{1, 5}(F, W)$ is positive semidefinite by \cref{lem:glue-psd}. Finally, note that 
        $t_{1, 5}(H', W) = W \odot W^{\circ 2} \odot t_{1, 5}(F, W)$ which is positive semidefinite by \cref{lem:walks-psd} and \cref{lem:hadamard PSD}. In particular this implies that $t(H', W) = \int_{\Omega \times \Omega} t_{1, 5}(H', W)(x, y) \dd x \dd y \ge 0,$ as required.
        \qedhere
    \end{itemize}
\end{proof}

\begin{figure}[h]
    \centering
    \includegraphics{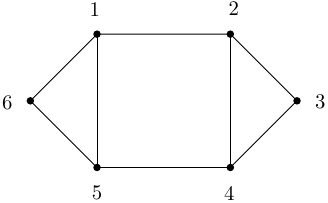}
    \caption{The graph $H$ from~\cref{thm:H0}.}
    \label{fig:H0}
\end{figure}
The above proof strategy can be used to prove~\cref{conj:regular-KNRS} for a slightly larger family of graphs, we have not included the most general statement for the sake of clarity of presentation.

Recall that the $\ell$-subdivision of a graph $H$ is obtained by replacing each edge by a path of length $\ell+1$.
\begin{proposition} \label{prop:subidivision-regular-KNRS}
    If a graph $H$ is regular-KNRS then for every $\ell \ge 1,$ the $\ell$-subdivision of $H$ is regular-KNRS.
\end{proposition}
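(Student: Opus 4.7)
The plan is to reduce the problem to the regular-KNRS property of $H$ by passing to an operator power of the graphon. First, I would formulate (in the style of \cref{lem:graphon equivalence}) the graphon-level version of regular-KNRS: it suffices to show $t(H^{(\ell)}, W) \ge p^{e(H^{(\ell)})}$ whenever $W$ is a $p$-regular, $p$-locally dense graphon. So fix such a $W$, and note that $e(H^{(\ell)}) = (\ell+1)e(H)$. Integrating out the $\ell$ internal vertices of each subdivided edge yields the identity
\[ t(H^{(\ell)}, W) \;=\; t\bigl(H,\, W^{\circ(\ell+1)}\bigr), \]
so it is enough to prove that $W^{\circ(\ell+1)}$ is a $p^{\ell+1}$-regular, $p^{\ell+1}$-locally dense graphon, and then invoke the regular-KNRS hypothesis for $H$ at density $p^{\ell+1}$.

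For the next step I would write $W = p + M$ with $M := W-p$. By \cref{cor:copositive-regular}, $M$ is a $0$-regular PSD kernel, so $T_M \mathbf{1} = 0$. Denote by $P$ the kernel constantly equal to $p$; then $T_P = p\,\langle \cdot, \mathbf{1}\rangle \mathbf{1}$ is rank one with image in the constants, so $T_M T_P = T_P T_M = 0$. Consequently
\[ T_W^{\,\ell+1} \;=\; (T_P + T_M)^{\ell+1} \;=\; T_P^{\,\ell+1} + T_M^{\,\ell+1}, \]
which at the level of kernels reads $W^{\circ(\ell+1)} = p^{\ell+1} + M^{\circ(\ell+1)}$. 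The kernel $W^{\circ(\ell+1)}$ takes values in $[0,1]$ because $\Omega$ is a probability space, so it is a graphon. Regularity is then immediate: $T_{W^{\circ(\ell+1)}} \mathbf{1} = p^{\ell+1}\mathbf{1}$. For local density, \cref{lem:walks-psd} gives that $M^{\circ(\ell+1)}$ is PSD, hence copositive, and \cref{lem:LD iff copositive} then tells us that $W^{\circ(\ell+1)} = p^{\ell+1} + M^{\circ(\ell+1)}$ is $p^{\ell+1}$-locally dense.

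Applying the graphon version of the regular-KNRS hypothesis for $H$ to this new graphon yields
\[ t(H^{(\ell)}, W) \;=\; t\bigl(H, W^{\circ(\ell+1)}\bigr) \;\ge\; (p^{\ell+1})^{e(H)} \;=\; p^{e(H^{(\ell)})}, \]
which closes the argument. There is no serious obstacle here; the only point requiring any care is the identity $W^{\circ(\ell+1)} = p^{\ell+1} + M^{\circ(\ell+1)}$, for which one needs the $0$-regularity of $M$ to kill all the cross terms in the binomial expansion of $(P+M)^{\circ(\ell+1)}$. This is precisely the mechanism that makes the regular-KNRS setting so well-behaved: the rigid decomposition into a constant-rank-one part plus a PSD part is preserved under operator powers, which is exactly what one needs to handle subdivisions.
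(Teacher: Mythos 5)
Your proof is correct and follows essentially the same route as the paper: both use the identity $t(H^{(\ell)}, W) = t(H, W^{\circ(\ell+1)})$, decompose $W = p + W_0$ with $W_0$ a $0$-regular PSD kernel via \cref{cor:copositive-regular}, observe that the cross terms $p \circ W_0 = W_0 \circ p = 0$ vanish to get $W^{\circ(\ell+1)} = p^{\ell+1} + W_0^{\circ(\ell+1)}$, and conclude via \cref{lem:walks-psd} and \cref{lem:LD iff copositive}. Your phrasing of the cross-term cancellation at the level of operators $T_P$ and $T_M$ is a cosmetic variant, and your explicit remark that $W^{\circ(\ell+1)}$ remains $[0,1]$-valued is a small but welcome check the paper leaves implicit.
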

\begin{proof}
    Let $H$ be regular-KNRS and let $W$ be a $p$-regular $p$-locally dense graphon. Let $H^{\circ \ell}$ denote the $\ell$-subdivision of $H.$ Crucially, note that $t(H^{\circ \ell}, W) = t(H, W^{\circ \ell+1}).$ By \cref{cor:copositive-regular}, $W_0 \coloneqq W - p$ is a $0$-regular positive semidefinite kernel. Observe that since $W_0$ is $0$-regular, $p \circ W_0 = W_0 \circ p = 0$ a.e. Hence, 
    \[ W^{\circ \ell + 1} = (p + W_0)^{\circ \ell + 1} = p^{\circ \ell + 1} + W_0^{\circ \ell + 1} = p^{\ell + 1} + W_0^{\circ \ell + 1}. \]
    Since $W_0$ is positive semidefinite, by \cref{lem:walks-psd}, $W_0^{\circ \ell + 1}$ is positive semidefinite and in particular, it is copositive. Furthermore, $W_0^{\circ \ell + 1}$ is $0$-regular. Indeed, for any $x \in \Omega,$ we have
    \[ \int_\Omega W_0^{\circ \ell + 1}(x, y) \dd y = \int_{\Omega^\ell} W_0(x, z_1) \prod_{i=1}^{\ell-1} W_0(z_i, z_{i+1}) \int_{\Omega} W_0(z_\ell, y) \prod_{i=1}^{\ell} \dd z_i \dd y. \]
    Since $W_0$ is $0$-regular, for almost every choice of $z_\ell,$ we have $\int_{\Omega} W_0(z_\ell, y) \dd y = 0$ and so $\int_\Omega W_0^{\circ \ell + 1}(x, y) \dd y = 0$ for all $x,$ implying that $W_0^{\circ \ell + 1}$ is $0$-regular. We conclude that $W^{\circ \ell + 1}$ is a $p^{\ell+1}$-locally dense, $p^{\ell+1}$-regular graphon. By the assumption, it follows that 
    \[ t(H^{\circ \ell}, W) = t(H, W^{\circ \ell + 1}) \ge (p^{\ell + 1})^{e(H)} = p^{e(H^{\circ \ell})},\]
    which completes the proof.
\end{proof}
We remark that in the setting of Sidorenko's conjecture, Conlon, Kim, Lee, and Lee \cite{MR3893193} proved a similar result, namely that the $1$-subdivision of any KNRS graph is Sidorenko.

Finally, we show that in the setting of \cref{conj:regular-KNRS}, we can use the so-called tensor power trick. We start with a lemma.

\begin{lemma} \label{lem:p-regular-tensor-square}
    Let $W$ be a $p$-regular, $p$-locally dense graphon. Then $W \otimes W$ is a $p^2$-regular, $p^2$-locally dense graphon.
\end{lemma}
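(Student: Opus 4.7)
The regularity portion is immediate: by Fubini, the degree function of $W \otimes W$ at any point $(x_1, x_2) \in \Omega \times \Omega$ equals $d_W(x_1) d_W(x_2) = p^2$, so $W \otimes W$ is $p^2$-regular.

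For the $p^2$-local density, I plan to invoke the reformulation of local density provided by \cref{lem:reiher graphon}, applied to the graphon $W \otimes W$ on the probability space $\Omega \times \Omega$. This reduces the task to showing that, for every bounded $g : \Omega \times \Omega \to [0, \infty)$,
\[
\iint_{(\Omega \times \Omega)^2} g(x_1,x_2)\, W(x_1,y_1) W(x_2,y_2)\, g(y_1,y_2) \dd x_1 \dd x_2 \dd y_1 \dd y_2 \;\geq\; p^2 \norm{g}_1^2.
\]
Setting $V := W - p$, I would expand $W(x_1,y_1) W(x_2,y_2) = (p + V(x_1,y_1))(p + V(x_2,y_2))$. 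The $p^2$-term contributes exactly $p^2 \norm{g}_1^2$, so it suffices to check that each of the three remaining cross terms—the one with $V(x_1,y_1) V(x_2,y_2)$, the one with $pV(x_1,y_1)$, and the one with $pV(x_2,y_2)$—integrates against $g(x_1,x_2) g(y_1,y_2)$ to a nonnegative number.

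The key input is \cref{cor:copositive-regular}, which, using the $p$-regularity of $W$, guarantees that $V$ is positive semidefinite. The first cross term is exactly $\inner{g}{(V \otimes V) g}_{L^2(\Omega \times \Omega)}$, which is nonnegative because $V \otimes V$ is PSD by \cref{lem:walks-psd}. For the second cross term, integrating out $x_2$ and $y_2$ first, and setting $\tilde g(x) := \int_\Omega g(x, u) \dd u \geq 0$, reduces the integral to $p \inner{\tilde g}{V \tilde g}_{L^2(\Omega)}$, which is nonnegative by PSD-ness (or just copositivity) of $V$. The third cross term is handled symmetrically, using the marginal $\int_\Omega g(u, \cdot) \dd u$ in place of $\tilde g$. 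Summing the three nonnegative cross terms and adding the $p^2$-term yields the required inequality.

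The argument does not face a genuine main obstacle, but it is worth emphasizing where the $p$-regularity hypothesis on $W$ is used: without it, $V = W - p$ would only be guaranteed to be copositive, and $V \otimes V$ is not known to preserve copositivity (in fact this is a subtle point already in finite dimensions). The detour through positive semidefiniteness—made possible by \cref{cor:copositive-regular}—is precisely what makes the expansion work, which fits the general philosophy emphasized in the introduction that the regular-KNRS setting is substantially better-behaved than the full KNRS setting.
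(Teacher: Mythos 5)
Your proof is correct and takes essentially the same route as the paper: you expand $W \otimes W = p^2 + V\otimes p + p\otimes V + V\otimes V$ with $V = W - p$, use \cref{cor:copositive-regular} to make $V$ positive semidefinite (the crucial place where $p$-regularity enters), and then handle the cross terms. The only cosmetic difference is that the paper shows each of the three non-constant terms is PSD (noting the constant-$p$ kernel is PSD and invoking closure of the PSD cone under tensor products and sums), whereas you check the two mixed terms by marginalizing $g$ and reducing to $\inner{\tilde g}{T_V \tilde g}\ge 0$; both arguments amount to the same computation.
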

\begin{proof}
    Let $W_0 \coloneqq W - p.$ We can write $W \otimes W$ as follows:
    \[ W \otimes W = (W_0 + p) \otimes (W_0 + p) = W_0 \otimes W_0 + W_0 \otimes p + p \otimes W_0 + p^2, \]
    where $p^2$ is the all-$p^2$ kernel on $\Omega \times \Omega.$
    By~\cref{cor:copositive-regular}, $W_0$ is $0$-regular and positive semidefinite. Clearly, the all-$p$ graphon is positive semidefinite. Recall that the tensor product of two positive semidefinite kernels is itself positive semidefinite and that the sum of two positive semidefinite kernels is positive semidefinite. Hence, $W \otimes W - p^2 = W_0 \otimes W_0 + W_0 \otimes p + p \otimes W_0$ is positive semidefinite and, in particular, it is copositive. By \cref{lem:LD iff copositive}, $W \otimes W$ is $p^2$-locally dense. It remains to verify that $W \otimes W$ is $p^2$-regular. Indeed, for almost all $(x, y) \in \Omega \times \Omega,$ we have
    \[ \int_{\Omega \times \Omega} (W \otimes W)((x, y), (x', y')) \dd x' \dd y' = \int_{\Omega \times \Omega} W(x, x') W(y, y') \dd x' \dd y' = p^2. 
    \qedhere
    \]
\end{proof}

\begin{proposition} 
    Let $H$ be a graph and suppose there is an absolute constant $c > 0$ such that $t(H, W) \ge c p^{e(H)}$ for every $p$-locally dense $p$-regular graphon $W.$ Then the same holds with $c=1,$ that is, $H$ is regular-KNRS.
\end{proposition}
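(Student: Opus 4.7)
The plan is to apply the standard tensor power trick, leveraging the fact that the class of $p$-locally dense, $p$-regular graphons is closed (in an appropriate quantitative sense) under tensor powers.

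First, I would iterate \cref{lem:p-regular-tensor-square} to show that for every integer $k\geq 1$, the tensor power $W^{\otimes k}$ is a $p^k$-regular, $p^k$-locally dense graphon on $\Omega^k$. The base case $k=1$ is trivial and the step $W^{\otimes k} = W \otimes W^{\otimes (k-1)}$ follows from an essentially identical argument to the proof of \cref{lem:p-regular-tensor-square}: expanding each factor as $W = p + W_0$ with $W_0$ positive semidefinite and $0$-regular by \cref{cor:copositive-regular}, the kernel $W^{\otimes k} - p^k$ is a sum of tensor products of positive semidefinite kernels (at least one factor being $W_0$ in each summand), hence positive semidefinite and in particular copositive; $p^k$-regularity of $W^{\otimes k}$ is a direct computation. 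By \cref{lem:LD iff copositive}, $W^{\otimes k}$ is $p^k$-locally dense.

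Next, I apply the hypothesis of the proposition to the graphon $W^{\otimes k}$, which gives
\[
t(H, W^{\otimes k}) \;\geq\; c\,(p^k)^{e(H)} \;=\; c\, p^{k\cdot e(H)}.
\]
By the multiplicativity of homomorphism densities over tensor products (equation~\eqref{eq:hom-density-in-tensor}), we have $t(H, W^{\otimes k}) = t(H, W)^k$. Combining these,
\[
t(H, W)^k \;\geq\; c\, p^{k\cdot e(H)},
\]
and taking $k$th roots yields $t(H, W) \geq c^{1/k}\, p^{e(H)}$.

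Finally, letting $k \to \infty$ and using that $c > 0$ is a fixed constant, so $c^{1/k} \to 1$, we conclude $t(H, W) \geq p^{e(H)}$. By \cref{lemit:KNRS} (suitably adapted to the regular setting via the graphon reformulation of \cref{conj:regular-KNRS}), this is precisely the statement that $H$ is regular-KNRS. There is no real obstacle here; the only point that requires some care is confirming that tensor powers preserve both the local density and $p$-regularity conditions simultaneously, which is exactly what \cref{lem:p-regular-tensor-square} and its iteration provide.
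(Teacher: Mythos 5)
Your proof is correct and follows essentially the same approach as the paper's: apply the tensor power trick using \cref{lem:p-regular-tensor-square}, multiplicativity of $t(H,\cdot)$ over tensor products, and send the power to infinity so that $c^{1/k}\to 1$. The only cosmetic difference is that the paper iterates on powers $2^k$ (so no adaptation of the lemma is needed), whereas you allow general $k$ and note the straightforward extension of the lemma; both work.
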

\begin{proof}
    Let $H, c$ be as in the statement and let $W$ be an arbitrary $p$-locally dense $p$-regular graphon. Inductively applying \cref{lem:p-regular-tensor-square}, we obtain that for any $k \ge 0,$ the graphon $W^{\otimes{2^{k}}}$ is $p^{2^{k}}$-locally dense and $p^{2^{k}}$-regular. By \eqref{eq:hom-density-in-tensor} and the assumption, we have
    \[ t(H, W) = \left( t(H, W^{\otimes{2^{k}}}) \right)^{1 / 2^k} \ge \left(c p^{2^{k} \cdot e(H)}\right) ^{1 / 2^k} = c^{1/2^k} p^{e(H)}. \]
    Taking $k \rightarrow \infty,$ we obtain $t(H, W) \ge p^{e(H)},$ as claimed.
\end{proof}

\section{Negative results}\label{sec:negative}
In this section we collect counterexamples to several natural avenues of proving \cref{conj:KNRS}. 

\begin{proposition}\label{prop:counter-examples}
    For all $p \in (0, 1/3),$ there is a $p$-locally dense graphon $W$ satisfying the followng:
    \begin{enumerate}
        \item $t(K_2, W) = \frac{9p}{8}$.
        \item For any graphon $W'$ which is not $0$ a.e.,\ $W - W'$ is not $p$-locally dense. \label{prop:cannot-erase}
        \item $W \otimes W$ is not $p^2$-locally dense. \label{tensor-not-dense}
        \item For any $\ell \ge 5,$ $W^{\circ \ell}$ is not $p^\ell$-locally dense. \label{paths-not-tensor}
    \end{enumerate}
\end{proposition}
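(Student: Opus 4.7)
The plan is to construct a specific block graphon $W$ of the form $W = p + cK$, where $K$ is a symmetric $n \times n$ matrix viewed as a block-constant kernel on $\Omega$ (partitioned into $n$ equal parts) and $c > 0$ is small, and then to verify the four properties in turn. By \cref{lem:LD iff copositive}, $W$ is $p$-locally dense iff $K$ is copositive, so for property~2 I would want $K$ to be extremal in the copositive cone, while for properties~3 and~4 I would want $K$ to fail to be positive semidefinite. By Diananda's theorem, every copositive $n \times n$ matrix with $n \le 4$ is the sum of a PSD and an entrywise non-negative matrix, so we need $n \ge 5$; a canonical extremal copositive-but-not-PSD example is the Horn matrix $H \in \{\pm 1\}^{5 \times 5}$. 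Taking $K = H$, the scaling $c = 5p/8$ gives $t(K_2, W) = p + c \cdot \tfrac{1}{5} = \tfrac{9p}{8}$ (using that each row of $H$ sums to $1$), so property~1 is immediate by construction. The restriction $p < 1/3$ is used to keep $W = p + (5p/8)H$ comfortably inside $[0, 1]$, with enough slack for the quantitative estimates in properties~3 and~4.

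For property~2 I would apply extremality of $H$: suppose $W' \not\equiv 0$ is a graphon such that $W - W'$ is still $p$-locally dense. Then $cH - W'$ is copositive, and $W'$ is copositive (being entrywise non-negative), and their sum is $cH$. By extremality of $H$ in the copositive cone, each summand must be a non-negative scalar multiple of $H$, so $W' = \lambda H$ for some $\lambda \ge 0$. But $H$ has negative entries while $W' \ge 0$, forcing $\lambda = 0$ and hence $W' \equiv 0$, a contradiction.

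Properties~3 and~4 are more delicate; both should be verified by producing an explicit non-negative test function on which the respective kernel is strictly less than its target constant. For property~3, I would expand $W \otimes W - p^2 = cp(H \otimes J + J \otimes H) + c^2(H \otimes H)$ on the block structure of $\Omega \times \Omega$, note that for any rank-one non-negative test function $F = f \otimes h$ the associated quadratic form is already $\ge 0$ by copositivity of $H$, and construct a genuinely non-product non-negative $F$---for instance, the indicator of a carefully chosen family of off-diagonal blocks in $\Omega^2$ modeled on a ``twisted'' permutation of $[5]$---that exploits the negative eigenvalues of $H \otimes H$ (the largest in magnitude being $(1+\sqrt 5)(1-\sqrt 5) = -4$) to drive the quadratic form below zero. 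For property~4, the eigenvalues of the operator $T_W$ are $p$ together with the eigenvalues of $cH$, so $W^{\circ \ell} - p^\ell$ inherits the eigenvalue structure of $H$ scaled to the $\ell$-th power; for $\ell \ge 5$ the absolute values of the negative eigenvalues (raised to the odd power $\ell$) become large enough to overwhelm the copositive contributions, enabling one to exhibit a witness set $U \subseteq \Omega$ on which $\iint_{U \times U} W^{\circ \ell} < p^\ell |U|^2$. The main obstacle throughout is the explicit combinatorial construction of these witnesses: they must stay non-negative to respect the copositive constraint, yet maximally exploit the non-PSD directions of $H$ and of its various tensor and operator powers.
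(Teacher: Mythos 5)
Your construction differs from the paper's. The paper takes $W$ to be the step graphon given by the $4\times 4$ block matrix
\[
A = \begin{pmatrix} 3p & 0 & 0 & 0 \\ 0 & p & 2p & 2p \\ 0 & 2p & p & 2p \\ 0 & 2p & 2p & p \end{pmatrix}
\]
on a partition of $[0,1]$ into four equal parts, and then verifies the four properties by direct, finitary calculations. You instead propose $W = p + cH$ with $H$ the $5\times 5$ Horn matrix and $c = 5p/8$; the normalization is fine and gives property~1. But the rest of the proposal has a genuine gap, and part of your framing is misleading.

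For property~2, the extremality argument is an attractive idea but is both unnecessarily strong and incomplete as written. It is unnecessarily strong: the paper's matrix $A - pJ$ is \emph{not} extremal in the $4\times 4$ copositive cone (by Diananda's theorem it decomposes as PSD plus nonnegative), and yet property~2 still holds; your inference that one therefore ``needs $n\ge 5$'' is false. The paper's argument is elementary: it exhibits three sets $S_j = [0,1]\setminus I_j$, each of density exactly $p$ in $W$, whose products $S_j \times S_j$ cover $[0,1]^2$, so removing any nonzero graphon $W'$ pushes some $S_j$ below density $p$. It is incomplete as written: extremality lives in the cone of $5\times 5$ matrices, while $W'$ is an arbitrary graphon, not a block-constant one. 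You would first need to pass to the block-averaged matrix $\bar W'$ (averaging a copositive kernel over blocks does preserve copositivity, so the step can be filled in), conclude $\bar W' = 0$, and then use $W' \ge 0$ to get $W' = 0$ a.e.

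The more serious gap is in properties~3 and~4, which you do not prove. You correctly observe that the copositivity of $H$ kills product test functions for $H\otimes H$, so any witness for property~3 must be a genuinely non-product nonnegative function on $\Omega\times\Omega$; and you acknowledge the same for property~4. But you do not produce such a witness, and the heuristic you offer --- that the negative eigenvalues of $H$ (or of $W^{\circ\ell}$) ``overwhelm'' the positive contributions --- is precisely the reasoning the proposition is designed to undercut: copositivity is strictly weaker than positive semidefiniteness, and the base graphon $W$ itself has negative eigenvalues of $W - p$ while still being $p$-locally dense. Having negative eigenvalues does not by itself certify failure of local density, so without a concrete test set it is not even clear that the Horn-matrix $W$ satisfies properties~3 and~4 at all; you would need to check this. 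In contrast, the paper writes down explicit witnesses: for property~3, the set $(I_1\times I_2)\cup(I_2\times I_1)\cup(I_2\times I_2)\cup(I_2\times I_3)$ has density $\tfrac34 p^2$ in $W\otimes W$; and for property~4, $I_1$ is absorbing for walks in $W$, so $d_{W^{\circ\ell}}(x)=(3p/4)^\ell$ for $x\in I_1$, which beats $p^\ell |I_1|^2$ as soon as $\ell\ge 5$. These are exactly the ``explicit combinatorial witnesses'' you identify as the main obstacle; the proposal stops short of supplying them.
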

Let us briefly comment on the statement of \cref{prop:counter-examples}. The first two points imply the existence of a $(p, o(1))$-locally dense $n$-vertex graph with at least $(\frac{9p}{8} - o(1)) \binom{n}{2}$ edges such that if we remove any set of $\Omega(n^2)$ edges, the graph is no longer $(p, o(1))$-locally dense. In particular, this presents a difficulty in establishing that a KNRS graph $H$ is density forcing. Indeed, suppose such an example did not exist, so for any $(p, o(1))$ locally-dense graph $G$ with $(p + \varepsilon) \binom{n}{2}$ edges, there is a set $F$ of $\Omega(n^2)$ edges such that $G \setminus F$ is $(p, o(1))$-locally dense. It is easy to show that there are at least $\Omega(n^{v(H)})$ copies of $H$ using edges of $F$, and thus we would have $t(H, G) \ge \Omega(n^{v(H)}) + t(H, G \setminus F) \ge (1 + \delta) n^{v(H)} p^{e(H)}$ for some $\delta > 0$, which would imply that $H$ is density forcing.

The third point gives a counterexample to the direct way of trying to apply the tensor power trick to the setting of \cref{conj:KNRS}. We remark that in the setting of Sidorenko's conjecture, the tensor power trick allows one to reduce to the case when $G$ is nearly regular; thus the third point also shows the difficulty of proving that \cref{conj:regular-KNRS} implies \cref{conj:KNRS}. Finally, the fourth point illustrates why we cannot prove an analogue of \cref{prop:subidivision-regular-KNRS} without the assumption of $p$-regularity.

\begin{proof}[Proof of \cref{prop:counter-examples}]
    Let $W$ be the graphon arising from the matrix $A = (a_{ij})_{i,j}$ defined as
    \[ A = \begin{pmatrix}
        3p & 0 & 0 & 0\\
        0 & p & 2p & 2p\\
        0 & 2p & p & 2p\\
        0 & 2p & 2p & p\\
    \end{pmatrix} \]
    In other words, let $\Omega = [0,1)$ endowed with the Lebesgue measure and for $i \in [4],$ denote $I_i = [(i-1)/4, i/4).$ Then $W(x, y)$ is given by $a_{ij}$ where $x \in I_i, y \in I_j.$ It is simple to verify that $W$ is $p$-locally dense and that $t(K_2,W) = \frac{9p}{8}.$ 
    
    To see Property~\ref{prop:cannot-erase}, note that for $2 \le j \le 4,$ the set $[0,1] \setminus I_j$ has density exactly $p$ in $W,$ and every pair $(x, y) \in [0,1]^2$ is contained in one of these sets. 

    To prove Property~\ref{tensor-not-dense}, consider the set $(I_1 \times I_2) \cup (I_2 \times I_1) \cup (I_2 \times I_2) \cup (I_2 \times I_3) \subseteq [0,1]^2.$ This set has density $\frac{3}{4}p^2$ in $W \otimes W$.

    Finally, for Property~\ref{paths-not-tensor}, note that for $x \in I_1,$ we have $d_{W^{\circ \ell}}(x) = (3p/4)^{\ell}.$ Indeed, $d_{W^{\circ \ell}}(x)$ counts the number of walks from $x$ and if $x \in I_1,$ any such walk stays in $I_1.$ Hence,
    \[ \int_{I_1 \times I_1} W^{\circ \ell} (x, y) \dd x \dd y \le \int_{I_1} d_{W^{\circ \ell}}(x) \dd x = \frac{3^\ell}{4^{\ell+1}} p^\ell. \]
    For $\ell \ge 5,$ we have $\frac{3^\ell}{4^{\ell+1}} p^\ell < \frac{1}{16} p^\ell = |I_1|^2 p^\ell,$ so for $\ell \ge 5,$ $W^{\circ \ell}$ is not $p^\ell$-locally dense.
\end{proof}

Finally, it might be tempting to conjecture that every graph is PSD-nonnegative, which would provide a direction for proving \cref{conj:regular-KNRS}. This however is not the case.

\begin{proposition} \label{prop:psd-nonnegative-counterexample}
    There exists a graph $H$ and a positive-semidefinite $0$-regular kernel $W$ such that $t(H, W) < 0.$
\end{proposition}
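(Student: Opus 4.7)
The plan is to produce an explicit counterexample: a positive-semidefinite, $0$-regular kernel $W$ together with a graph $H$ for which $t(H, W) < 0$. As a first step, I would reduce the problem to the finite-dimensional setting. Any symmetric $n \times n$ PSD matrix $M$ with $M\mathbf{1}=0$ gives rise, via the step-function construction on $[0,1]$ partitioned into $n$ equal intervals, to a PSD, $0$-regular kernel $W$ with
\[
  t(H,W) \;=\; n^{-v(H)}\sum_{\phi \colon V(H) \to [n]} \prod_{uv \in E(H)} M_{\phi(u),\phi(v)}.
\]
Hence it suffices to exhibit some such $M$ and $H$ for which this finite sum is negative.

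Second, I would parameterize the search for $M$ by Gram constructions: given vectors $v_1, \dots, v_n \in \mathbb{R}^d$ with $\sum_i v_i = 0$, set $M_{ij} = \langle v_i, v_j\rangle$. This automatically ensures $M$ is PSD with $M\mathbf{1} = 0$, and by expanding each inner product I obtain
\[
  t(H,M) \;=\; \sum_{c \colon E(H) \to [d]} \prod_{u \in V(H)} \E_i\Big[\prod_{e \ni u}(v_i)_{c(e)}\Big],
\]
where the inner expectation is a higher moment of the coordinates of the $v_i$'s. The parameters $d$ and the configuration $\{v_i\}$ can be chosen to engineer an asymmetric third- or higher-order moment tensor, which is what makes individual contributions to the sum negative.

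The principal obstacle is identifying the right graph $H$. The results of \cref{sec:regular-KNRS} rule out many natural candidates: cliques, cycles, complete bipartite graphs, generalized $\Theta$-graphs, the graph of \cref{thm:H0}, and graphs built from these by the gluings considered earlier are all hereditarily PSD-nonnegative, so none of them can witness $t(H, W) < 0$. Likewise, the most symmetric Gram configurations (e.g.\ the regular simplex in $\R^{n-1}$) tend to force $t(H, M)$ to equal a non-negative combinatorial invariant such as a nowhere-zero flow count, which rules out a broad class of $M$'s. The counterexample must therefore simultaneously involve a graph $H$ whose cycle structure falls outside all of the hereditarily PSD-nonnegative families isolated in \cref{sec:regular-KNRS}, and a sufficiently asymmetric Gram configuration whose odd-order mixed moments couple nontrivially to that structure. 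Once such a pair has been identified, the verification that $t(H,M) < 0$ is a direct finite computation of the sum displayed above.
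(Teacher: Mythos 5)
Your proposal correctly reduces the problem to the finite-dimensional setting---finding a PSD matrix $M$ with $M\mathbf{1}=0$ and a graph $H$ with $t(H,M)<0$, and then lifting to a step-function kernel---and this is exactly the framework the paper uses. You also correctly identify that $H$ must lie outside the hereditarily PSD-nonnegative families established in \cref{sec:regular-KNRS}, which is a real constraint on the search.

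However, what you have written is a search strategy, not a proof. The statement is an existence claim, and proving it by construction requires actually exhibiting the data and checking the inequality. Your proposal ends with ``Once such a pair has been identified, the verification\ldots{}is a direct finite computation,'' but no pair is identified; neither a specific graph $H$ nor a specific matrix $M$ (or Gram configuration $\{v_i\}$) is produced, and no computation is carried out. This is the entire content of the proposition. The paper's proof supplies precisely this missing ingredient: an explicit $6$-vertex, $8$-edge graph $H$ and an explicit $5\times5$ integer matrix with zero row sums, found by computer search, together with the (routine but necessary) verification that the matrix is PSD and that $t(H,W)<0$. Until you actually name a graph and a matrix and check the sign, you have not proved the proposition. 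The Gram-vector parameterization is a sensible way to organize such a search, but it does not by itself certify that a solution exists; indeed, one could imagine that every PSD matrix with zero row sums gives $t(H,\cdot)\ge 0$ for every $H$, and ruling that out is exactly what the explicit example accomplishes.
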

\begin{proof}
    Let $H$ be the $6$-vertex graph in Figure~\ref{fig:h-counter} and let $W$ be the kernel defined by the matrix
    \[ \begin{pmatrix}
       18 & -12 & 12 & -12 & -6\\
       -12 & 35 &  28 & -19 & -32\\
       12 & 28 & 56 & -44 & -52\\
       -12 & -19 & -44 & 35 & 40\\
       -6 & -32 & -52 & 40 & 50\\
    \end{pmatrix} \]
    as in the proof of~\cref{prop:counter-examples}. This example was found by computer search. It is straightforward to check that $W$ is a positive semidefinite $0$-regular kernel and that $t(H, W) < 0.$
\end{proof}
\begin{figure}[ht]
    \centering
    \includegraphics[scale=.7]{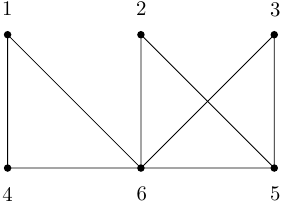}
    \caption{The graph $H$ in \cref{prop:psd-nonnegative-counterexample} which is not PSD-nonnegative.}
    \label{fig:h-counter}
\end{figure}

\appendix
\section{More on graphons}\label{sec:graphon appendix}
\subsection{Graph limits}

As usual, given a function $f:\Omega \to \R$ and a real number $p \geq 1$, we define the $L^p$ norm of $f$ as
\[
\norm f_p \coloneqq \left(\int_\Omega \ab{f(x)}^p\dd x\right)^{1/p},
\]
assuming this integral is finite. We also define $\norm f_\infty$ to be the essential supremum of $\ab f$. These definitions naturally extend to kernels, viewed as functions on the probability space $\Omega \times \Omega$; concretely, we define
\[
\norm W_p \coloneqq \left(\iint_{\Omega \times \Omega} \ab{W(x,y)}^p \dd x \dd y\right)^{1/p},
\]
and similarly $\norm W_\infty$ is the essential supremum of $\ab W$. Note that since kernels are defined to be bounded, $\norm W_p$ is finite for all $1 \leq p \leq \infty$.

More important in the study of graphons is the \emph{cut norm}, which is defined for an arbitrary kernel as
\[
\norm W_\square \coloneqq \sup_{S,T \subseteq \Omega} \bab{\iint_{S \times T} W(x,y) \dd x \dd y},
\]
where the supremum runs over all pairs of measurable subsets $S,T \subseteq \Omega$. It is well-known \cite[Lemma 8.10]{MR3012035} that we can equivalently define the cut norm as
\begin{equation}\label{eq:cut norm functions}
    \norm W_\square = \sup_{f,g:\Omega \to [0,1]} \bab{\iint_{\Omega \times \Omega} f(x) W(x,y) g(y)\dd x \dd y},
\end{equation}
where the supremum runs over all pairs of measurable functions $f,g:\Omega \to [0,1]$.

The cut norm, like any norm, defines a natural metric on the space of kernels, but for the study of graphons it is more useful to define an ``unlabeled'' version of this metric. If $\varphi:\Omega \to \Omega$ is an invertible measure-preserving map, then we denote by $W^\varphi$ the kernel given by $W^\varphi(x,y) \coloneqq W(\varphi(x),\varphi(y))$. The \emph{cut distance} between two kernels $W_1, W_2$ is then defined as
\[
\delta_\square(W_1,W_2)\coloneqq \inf_\varphi \norm{W_1-W_2^\varphi}_\square,
\]
where the infimum runs over all measure-preserving invertible maps $\varphi:\Omega \to \Omega$.

As it turns out, every graph can naturally be viewed as a graphon, as we now define.
\begin{definition} \label{def:W_G}
    Let $G$ be a graph on vertex set $[n]$, and let $\Omega$ be an atomless standard probability space. Arbitrarily partition $\Omega$ into measurable sets $I_1,\dots,I_n$, with $\ab{I_i}=1/n$ for all $i$ (this is possible as $\Omega$ is atomless). For $x \in \Omega$, define $\iota(x)$ to be the unique index $i$ such that $x \in I_i$. We then define a graphon $W_G:\Omega \times \Omega \to [0,1]$ by
    \[
    W_G(x,y) \coloneqq
    \begin{cases}
        1&\text{if }(\iota(x),\iota(y)) \in E(G),\\
        0&\text{otherwise}.
    \end{cases}
    \]
    Note that this definition depends both on the identification between $V(G)$ and $[n]$ and on the choice of partition of $\Omega$, but these differences turn out to be immaterial, and we denote by $W_G$ any graphon that arises in this way.
\end{definition}

We now turn to the notion of graph limits.
\begin{definition}
    Let $G_1,G_2,\dots$ be a sequence of graphs. We say that this sequence \emph{converges} to a graphon $W$ if $\delta_\square(W_{G_n},W) \to 0$ as $n \to \infty$, and we write $G_n \to W$.
\end{definition}
The most important basic result about graphons is that they are the limit objects for sequences of graphs.
\begin{theorem}[{see \cite[Theorem 11.21, Lemma 10.18, Proposition 11.32, and Corollary 11.34]{MR3012035}}]\label{thm:graph limit} 
    If $W$ is a graphon, then there exist graphs $G_1,G_2,\dots$ such that $G_n \to W$.

    Conversely, if $G_1,G_2,\dots$ is an arbitrary sequence of graphs with ${v(G_n)} \to \infty$, then there is a subsequence $G_{n_1},G_{n_2},\dots$ converging to some graphon $W$.
\end{theorem}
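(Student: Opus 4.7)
The plan is to prove the two halves separately: $W$-random sampling realizes every graphon as a limit, while a regularity-lemma compactness argument extracts convergent subsequences from any graph sequence.

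For the first direction, given a graphon $W$ on $\Omega$, I would sample $x_1,\dots,x_n$ i.i.d.\ from $\mu$ and form the random graph $\G(n,W)$ on $[n]$ where edges appear independently with $\pr(\{i,j\} \in E) = W(x_i,x_j)$. The goal is to show $\delta_\square(W_{\G(n,W)},W) \to 0$ almost surely, which by Borel--Cantelli suffices to produce a deterministic realizing sequence. This decomposes into two steps. First, the ``empirical kernel'' $W_n$ with block values $W(x_i,x_j)$ satisfies $\delta_\square(W_n,W) \to 0$ a.s.; this follows by bounding $\sup_{f,g}|\iint f(W_n-W)g|$ via \eqref{eq:cut norm functions}, using Hoeffding-type concentration on a finite $\varepsilon$-net of step functions and then taking $\varepsilon \to 0$. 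Second, Bernoullification preserves the cut norm up to $o(1)$: changing a single edge perturbs $\norm{\cdot}_\square$ by at most $n^{-2}$, so McDiarmid's inequality combined with a union bound over the $4^n$ pairs of subsets gives $\norm{W_{\G(n,W)}-W_n}_\square \to 0$ a.s.

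For the second direction, I would invoke the weak (Frieze--Kannan) regularity lemma: for every $\varepsilon > 0$ and every kernel $U$, there exists a step function $U^\varepsilon$ with at most $M(\varepsilon) = 2^{O(1/\varepsilon^2)}$ steps and $\norm{U-U^\varepsilon}_\square \leq \varepsilon$. Given a graph sequence $G_n$, for each fixed $k$ approximate $W_{G_n}$ by a step function $U_n^{(k)}$ with $M_k \coloneqq M(1/k)$ steps; after a measure-preserving relabeling, $U_n^{(k)}$ is encoded by a symmetric matrix in $[0,1]^{M_k \times M_k}$ together with a probability vector of part sizes on the simplex. Both data live in compact spaces, so Bolzano--Weierstrass gives a subsequence along which $U_n^{(k)} \to U^{(k)}$ entrywise, hence in cut norm. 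Diagonalizing across $k$, I obtain a single subsequence $G_{n_j}$ making all $U_{n_j}^{(k)}$ converge. The limits $\{U^{(k)}\}_k$ form a Cauchy sequence in $\delta_\square$ (since $\delta_\square(U^{(k)},U^{(\ell)}) \leq 1/k+1/\ell$ in the limit), so by completeness of the quotient metric they converge to some graphon $W$, which is the limit of $G_{n_j}$.

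The main obstacle is handling the invariance under measure-preserving bijections that is built into $\delta_\square$. For each $n$ and $k$, the step function $U_n^{(k)}$ comes with an ordering of its parts that depends on $n$, and comparing or averaging across $n$ requires first aligning these labelings canonically. The clean resolution is to work in the quotient space of graphons modulo measure-preserving transformations and to show that compactness of $[0,1]^{M_k \times M_k}$ transfers to precompactness in this quotient; equivalently, one canonicalizes each $U_n^{(k)}$ (for example, by reordering parts lexicographically, or by arranging a refinement structure between the partitions for different $k$ so that a martingale argument produces the $U^{(k)}$'s as nested conditional expectations). Once this alignment is in place, both the extraction of the convergent subsequence and the verification that the Cauchy sequence $\{U^{(k)}\}_k$ has a bona fide graphon limit are straightforward.
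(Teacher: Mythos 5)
The paper does not prove this theorem; it cites it directly from Lov\'asz's monograph \cite{MR3012035}, combining the compactness of the graphon space under $\delta_\square$ with the density of graphons coming from finite graphs. Your sketch essentially retraces that reference's approach: $W$-random sampling plus concentration for the forward direction, and weak (Frieze--Kannan) regularity plus a diagonal extraction for the converse.

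Two steps in your sketch need more care. In the forward direction the genuinely delicate half is the first sampling lemma $\delta_\square(W_n,W)\to 0$. Your proposed union bound over a finite ``$\varepsilon$-net of step functions'' for the test pair $f,g$ in \eqref{eq:cut norm functions} does not go through as stated: for the net to control the supremum uniformly over samples one needs $|\iint (f-f')(W_n-W)g|\le\norm{f-f'}_1$, so the relevant metric is $L^1$, and the class of $[0,1]$-valued functions is not totally bounded in $L^1$; no finite net exists. The standard resolution nets $W$ itself rather than the test functions: approximate $W$ by a step function of bounded complexity via weak regularity, show the empirical version of the approximant converges by the law of large numbers, and propagate the cut-norm error on both sides. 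Your Bernoullification step, by contrast, is fine as written. In the converse direction, appealing to ``completeness of the quotient metric'' is circular in spirit, since completeness of the space of graphons under $\delta_\square$ is ordinarily obtained as a corollary of the compactness you are in the process of proving. The honest closure is exactly what your final paragraph anticipates: choose the weak regularity partitions for different $k$ to be nested, so that the limiting step functions $U^{(k)}$ form a sequence of iterated conditional expectations; the martingale convergence theorem then produces a bona fide limit graphon $W$ with no appeal to abstract completeness, and a $3\varepsilon$-argument gives $\delta_\square(W_{G_{n_j}},W)\to 0$. With these two repairs your argument matches the proof in the cited reference.
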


 A crucial property about homomorphism densities is that they characterize convergence of graph sequences.
\begin{theorem}[{see e.g.\ \cite[Theorem 4.3.7]{MR4603631}}]\label{thm:left convergence}
    A sequence of graphs $G_1,G_2,\dots$ converges to a graphon $W$ if and only if $t(H,G_n) \to t(H,W)$ for every graph $H$.
\end{theorem}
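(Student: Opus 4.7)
My plan is to prove both directions via the counting lemma plus a compactness-and-uniqueness argument, with the uniqueness step being the main difficulty.

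For the forward direction I would first prove the \emph{counting lemma}: for any graph $H$ and any two graphons $U_1, U_2$ on $\Omega$,
\[ |t(H, U_1) - t(H, U_2)| \le e(H) \norm{U_1 - U_2}_\square. \]
The proof uses a telescoping identity, writing $\prod_{uv \in E(H)} U_1(x_u, x_v) - \prod_{uv \in E(H)} U_2(x_u, x_v)$ as a sum of $e(H)$ differences in which the two graphons differ on a single edge $uv$; fixing all variables except $x_u$ and $x_v$, the ``other'' factors bundle into functions taking values in $[0,1]$, so that each resulting integral is bounded by $\norm{U_1 - U_2}_\square$ via the functional characterization \eqref{eq:cut norm functions}. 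Since this bound is invariant under composing $U_2$ with any measure-preserving bijection, we obtain $|t(H, U_1) - t(H, U_2)| \le e(H) \delta_\square(U_1, U_2)$. Taking $U_1 = W_{G_n}$ and $U_2 = W$, combined with the standard identity $t(H, W_{G_n}) = t(H, G_n)$, gives $t(H, G_n) \to t(H, W)$ whenever $G_n \to W$.

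For the reverse direction, assume $t(H, G_n) \to t(H, W)$ for every graph $H$. By the compactness portion of \cref{thm:graph limit}, every subsequence $(G_{n_k})$ contains a further subsequence $(G_{n_{k_j}})$ converging in cut distance to some graphon $W'$; the forward direction then yields $t(H, G_{n_{k_j}}) \to t(H, W')$ along this sub-subsequence, and combined with the hypothesis this forces $t(H, W') = t(H, W)$ for every $H$. If we can establish the \emph{uniqueness lemma}---namely that $t(H, W_1) = t(H, W_2)$ for every graph $H$ implies $\delta_\square(W_1, W_2) = 0$---then $W'$ equals $W$ in the $\delta_\square$-sense, so every subsequence has a further subsequence converging to $W$, and a standard real-analysis argument yields $\delta_\square(W_{G_n}, W) \to 0$.

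The uniqueness lemma is the main obstacle. I would approach it via random sampling: for a graphon $U$ on $\Omega$, let $\G(k, U)$ denote the random graph on $[k]$ obtained by drawing $k$ i.i.d.\ points $x_1, \dots, x_k \in \Omega$ and placing each edge $ij$ independently with probability $U(x_i, x_j)$. Expanding the edge/non-edge indicators, the probability of any specific graph outcome is a polynomial in the homomorphism densities $\{t(H, U) : v(H) \le k\}$, so the hypothesis makes $\G(k, W_1)$ and $\G(k, W_2)$ equidistributed for every $k$. A second-moment concentration argument, using the counting lemma to control expectations and an Azuma-type bound to control deviations (sampling one vertex at a time), shows that $\delta_\square(W_{\G(k, U)}, U) \to 0$ in probability as $k \to \infty$ for any graphon $U$. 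Coupling this fact for $W_1$ and $W_2$ simultaneously forces $\delta_\square(W_1, W_2) = 0$. This sampling-and-concentration step is the analytically delicate heart of the argument.
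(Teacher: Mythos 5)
The paper itself does not prove this theorem; it simply cites it to Zhao's book, so there is no in-paper argument to compare against. Your outline is, at a high level, the standard proof of the Borgs--Chayes--Lov\'asz--S\'os--Vesztergombi equivalence: counting lemma for the forward direction, compactness plus a uniqueness lemma for the reverse, with the uniqueness lemma proved via $W$-random sampling. The forward direction and the reduction to the uniqueness lemma are both correct, and your telescoping argument is fine (the key point, which you gesture at, is that since $uv$ is the only edge of $H$ joining $u$ and $v$, the prefactor and postfactor split as a product of a $[0,1]$-valued function of $x_u$ and a $[0,1]$-valued function of $x_v$ after the other coordinates are frozen, so the functional form of the cut norm applies).

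The gap is in your proof of the sampling lemma, namely the claim that $\delta_\square(W_{\G(k,U)},U)\to 0$ in probability. You propose to control expectations with the counting lemma and deviations with an Azuma-type martingale bound, sampling one vertex at a time. This cannot be made to work as stated, and the reason is structural: the cut distance is a supremum over all measure-preserving rearrangements and all partitions, and Azuma only controls a \emph{fixed} functional of the sample, not a supremum over exponentially many (indeed, uncountably many) choices. Moreover, using the counting lemma here is circular, since passing from ``$t(H,W_{\G(k,U)})$ is close to $t(H,U)$ for all small $H$'' to ``$\delta_\square(W_{\G(k,U)},U)$ is small'' is precisely the inverse counting lemma, which is what you are trying to establish. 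The missing ingredient is the \emph{weak regularity lemma}: the standard proof first approximates $U$ in cut norm by a step function with a bounded number of steps, then shows (via concentration, and here Azuma is appropriately used) that a random sample of $k$ vertices approximately preserves the partition structure, and finally compares the two step functions directly. This regularity step is why the resulting bound is of the weak form $O(1/\sqrt{\log k})$ rather than $O(1/\sqrt{k})$, and it is not dispensable. Without it, your outline of the ``analytically delicate heart'' does not go through.
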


\subsection{Locally dense graphons}
In this section, we prove various basic results about locally dense graphons, and ultimately prove \cref{lem:graphon equivalence}.

Note that, given a $(p, \delta)$-locally dense graph $G$, its associated graphon $W_G$ is not $p$-locally dense for any $p > 0$ as, for example, the sets corresponding to a single vertex have measure $1 / v(G)$ but the density of $W_G$ inside them is zero. Hence, to make a meaningful connection, we must look at the limit of a sequence of locally dense graphs.

\begin{lemma}\label{lem:graphon LD}
    Let $W$ be a graphon on $\Omega$. $W$ is $p$-locally dense if and only if there exists a sequence of graphs $G_1,G_2,\dots$ with $G_n \to W$ such that $G_n$ is $(p_n,\delta_n)$-locally dense, where $p_n \to p$ and $\delta_n \to 0$ as $n \to \infty$.
\end{lemma}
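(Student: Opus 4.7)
The plan is to prove the two implications separately. The backward direction---passing local density to a graphon limit---is where Reiher's lemma (\cref{lem:reiher}) does the real work, while the forward direction is a standard sampling argument.

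For the backward direction, fix a measurable $U \subseteq \Omega$; we may assume $|U|>0$, as otherwise the desired inequality is trivial. Since $G_n \to W$ in cut distance, we can choose measure-preserving bijections $\varphi_n\colon\Omega\to\Omega$ with $\|W_{G_n}^{\varphi_n}-W\|_\square \to 0$, and the definition of the cut norm (applied to $S=T=U$) immediately yields $\iint_{U \times U} W_{G_n}^{\varphi_n} \to \iint_{U \times U} W$. Setting $U^*_n = \varphi_n(U)$ and $\alpha_i^{(n)} \coloneqq v(G_n) \cdot |U^*_n \cap I_i^{(n)}| \in [0,1]$, where $I_i^{(n)}$ are the step cells of $W_{G_n}$, a direct computation using the step structure of $W_{G_n}$ gives
\[ \iint_{U^*_n \times U^*_n} W_{G_n} \;=\; \frac{2}{v(G_n)^2}\sum_{ij \in E(G_n)} \alpha_i^{(n)}\alpha_j^{(n)}, \qquad \sum_i \alpha_i^{(n)} \;=\; v(G_n)|U|. \]
Since $\delta_n \to 0$ and $|U|>0$, eventually $\sum_i \alpha_i^{(n)} \geq \delta_n v(G_n)$, so Reiher's lemma applied to $G_n$ with weight function $\alpha^{(n)}$ gives $\sum_{ij \in E(G_n)}\alpha_i^{(n)}\alpha_j^{(n)} \geq \tfrac{p_n}{2}v(G_n)^2|U|^2 - v(G_n)$. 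Substituting this in and sending $n \to \infty$ (using $p_n \to p$ and $v(G_n) \to \infty$) yields $\iint_{U\times U} W \geq p|U|^2$, as required.

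For the forward direction, since $\Omega$ is atomless, partition it into equal-measure cells $I_1^{(n)}, \dots, I_n^{(n)}$ and build a random graph $G_n$ on $[n]$ by including each edge $ij$ independently with probability $p_{ij} \coloneqq n^2 \iint_{I_i^{(n)} \times I_j^{(n)}} W \in [0,1]$. Standard results on sampled graph limits give $W_{G_n} \to W$ in cut distance a.s.: the averaged step kernel converges to $W$ in $L^2$ by martingale convergence, and $W_{G_n}$ concentrates in cut norm around the averaged step kernel by a classical concentration bound for step-graphons with independent entries. For local density, fix $S \subseteq [n]$ with $|S| \geq \delta_n n$ and let $U_S = \bigcup_{i \in S} I_i^{(n)}$, so $|U_S|=|S|/n$. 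Using that $W$ is $p$-locally dense,
\[ \E[e(S)] \;=\; \sum_{i<j,\,i,j\in S} p_{ij} \;\geq\; \tfrac{n^2}{2}\iint_{U_S \times U_S} W - \tfrac{n}{2} \;\geq\; p\binom{|S|}{2} - \tfrac{n}{2}. \]
A Hoeffding bound with deviation $n^{3/2}\sqrt{\log n}$ defeats the $2^n$ union bound over subsets, so a.s.\ every such $S$ satisfies $e(S) \geq p\binom{|S|}{2} - o(n^2)$. Picking $\delta_n \to 0$ slowly (say $\delta_n = n^{-1/10}$) then gives that $G_n$ is $(p-o(1),\delta_n)$-locally dense almost surely; intersecting with the cut-distance convergence event and choosing any realization in the intersection produces the required deterministic sequence.

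The main subtlety, as I see it, is that in the backward direction the set $U$ is an arbitrary measurable subset of $\Omega$, whereas the local density of $G_n$ is only a statement about \emph{vertex subsets}---equivalently, about unions of step cells of $W_{G_n}$. In general the pushed-forward set $U^*_n = \varphi_n(U)$ cuts across the step cells, so local density of $G_n$ does not apply directly, and one cannot cleanly ``round'' $U^*_n$ to a union of cells because the step partitions for different $n$ are unrelated. Reiher's lemma bridges precisely this gap: it upgrades the subset-based density bound for $G_n$ to a bound involving arbitrary $[0,1]$-valued weights on the vertices, which is exactly the flexibility needed to handle the fractional weights $\alpha_i^{(n)}$ arising from how $U^*_n$ intersects each cell $I_i^{(n)}$.
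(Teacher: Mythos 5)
Your backward direction is essentially the paper's argument: push $U$ through a near-optimal measure-preserving bijection, read off fractional weights from the overlaps with the step cells of $W_{G_n}$, apply \cref{lem:reiher} to those weights, and let cut-norm convergence close the gap. (The paper phrases this as a proof by contradiction, but the underlying computation is identical; you also leave the claim $v(G_n)\to\infty$ unjustified---the paper observes that after reducing to $p>0$ this follows from $\delta_n\to 0$, since otherwise singleton sets would already violate local density.) The forward direction, though, takes a genuinely different route. The paper's key observation is that once $W$ is $p$-locally dense, \emph{any} sequence of graphs converging to $W$ is automatically $(p-\delta_n,\delta_n)$-locally dense: a bad vertex set $S$ in $G_n$ would correspond to a set $U=\bigcup_{i\in S}I_i$ on which $W_{G_n}$ and $W$ differ by more than $\norm{W_{G_n}-W}_\square$, an immediate contradiction. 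This is short and gives a stronger conclusion (no construction needed). You instead build a specific sequence by sampling a random graph with independent edges governed by step-averaged densities of $W$, then run a Hoeffding-plus-union-bound argument over all $2^n$ subsets. This does work---the $n^{3/2}\sqrt{\log n}$ deviation beats the union bound, and the $O(n)$ discrepancy between $\binom{|S|}{2}$ and $|S|^2/2$ is harmless---but it imports a fair amount of probabilistic machinery that the problem does not require, and your appeal to ``martingale convergence'' of the step-averaged kernels is slightly loose, since the partitions at different $n$ are not nested (one should use dyadic sizes, or cite the Lebesgue differentiation theorem / density of step functions in $L^1$ directly). The paper's route is shorter and more informative, but your construction is a valid alternative.
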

\begin{proof}
    Suppose first that $W$ is $p$-locally dense. By \cref{thm:graph limit}, there exists a sequence of graphs $(G_n)$ with $G_n \to W$. So it suffices to prove that $G_n$ is $(p_n,\delta_n)$-locally dense, where $p_n=p+o(1)$ and $\delta_n=o(1)$, and all $o(1)$ terms tend to $0$ as $n \to \infty$. 

    Recall that by the definition of convergence,  we have that $\delta_\square(W_{G_n},W)\to 0$. 
    From the definitions of $\delta_\square$ and $W_G$, we see that we may select a partition of $\Omega$ into sets $I_1,\dots,I_{{v(G_n)}}$, each of measure $1/{v(G_n)}$, so that the resulting graphon $W_{G_n}$ satisfies $\norm{W_{G_n}-W}_\square \to 0$. Let $\delta_n = \norm{W_{G_n}-W}_\square^{1/3}$. We claim that $G_n$ is $(p-\delta_n,\delta_n)$-locally dense, which suffices since $\delta_n \to 0$ as $n \to \infty$.
    Suppose for contradiction that $G_n$ is not $(p-\delta_n,\delta_n)$-locally dense; this means that there exists $S \subseteq V(G_n)$ with $\ab S \geq \delta_n {v(G_n)}$, such that $e_{G_n}(S) < (p-\delta_n)\ab S^2/2$.

    Let $U = \bigcup_{i \in S}I_i$. Then from the definition of $W_{G_n}$, we see that 
    \[
    \iint_{U \times U} W_{G_n}(x,y)\dd x \dd y = \frac{2e_{G_n}(S)}{{v(G_n)}^2} < (p-\delta_n) \frac{\ab S^2}{{v(G_n)}^2} = (p-\delta_n)\ab U^2.
    \]
    On the other hand, as $W$ is $p$-locally dense, we have that $\iint_{U \times U}W(x,y)\dd x \dd y \geq p\ab U^2$. Combining these inequalities with the definition of the cut norm, we find that
    \[
         \norm{W_{G_n}-W}_\square \geq \iint_{U \times U} (W-W_{G_n})(x,y)\dd x \dd y > \delta_n \ab U^2 \geq \delta_n^3,
    \]
    which is a contradiction since our choice of $\delta_n$ implies that $\delta_n^3 = \norm{W_{G_n}-W}_\square$.

    It remains to prove the converse, so suppose that $G_n$ is a sequence of $(p_n,\delta_n)$-locally dense graphs, such that $p_n \to p, \delta_n \to 0$, and $G_n \to W$ as $n \to \infty$. As above, we may pick graphons $W_{G_n}$ such that $\norm{W_{G_n}-W}_\square \to 0$ as $n \to \infty$. We may assume that $p > 0,$ otherwise the statement is trivial as any graphon is $0$-locally dense. Note that since $\delta_n \to 0$, we have that $v(G_n) \to \infty$. Suppose for contradiction that $W$ is not $p$-locally dense, and let $U \subseteq \Omega$ be a measurable set such that $\iint_{U \times U}W < p\ab U^2$. Let $\delta=\ab U$, and let $\varepsilon>0$ such that $\iint_{U \times U}W \leq (p-\varepsilon) \ab U^2$. We now define functions $f_n:V(G_n) \to [0,1]$ as follows. Recall that $W_{G_n}$ is obtained by partitioning $\Omega$ into sets $I_1,\dots,I_{{v(G_n)}}$, each of measure $1/{v(G_n)}$. For $1 \leq i \leq {v(G_n)}$, we define $f_n(i) \coloneqq \ab{U \cap I_i}/\ab{I_i}$ to be the fraction of $I_i$ contained in $U$. Note that this definition immediately implies that
    \begin{equation}\label{eq:f property}
        \frac 1{{v(G_n)}^2}\sum_{(i,j) \in E(G_n)} f_n(i)f_n(j) = 2 \iint_{U \times U} W_{G_n}(x,y)\dd x \dd y.
    \end{equation}
    Since $\delta_n \to 0$, we may pick $N_1$ such that $\delta_n\leq \delta$ for all $n \geq N_1$. If $\delta_n \leq \delta=\ab U$, then $\sum_{v \in V(G_n)}f_n(v) = \delta {v(G_n)}\geq \delta_n {v(G_n)}$. Hence, for $n\geq N_1$, \cref{lem:reiher} and \eqref{eq:f property} imply that 
    \[
    \iint_{U \times U} W_{G_n}(x,y) \dd x \dd y \geq p_n \ab U^2 - \frac 1{{v(G_n)}}.
    \]
    However, as $p_n \to p$ and $v(G_n) \to \infty$, we may pick $N_2$ such that for all $n \geq N_2$, we have $p_n |U|^2 - \frac 1{{v(G_n)}} \geq (p - \frac \varepsilon 2) |U|^2$. Thus, for all $n \geq \max\{N_1,N_2\}$, we find that
    \[
    \norm{W_{G_n}-W}_\square \geq \iint_{U \times U} (W_{G_n}-W)(x,y) \dd x \dd y \geq \left(p - \frac{\varepsilon}{2} - (p-\varepsilon)\right) \ab U^2 = \frac \varepsilon 2 \ab U^2.
    \]
    However, the left-hand side tends to $0$ as $n \to \infty$, a contradiction.
\end{proof}

Given \cref{lem:graphon LD}, it is not hard to prove \cref{lem:graphon equivalence}.

\begin{proof}[Proof of \cref{lem:graphon equivalence}]
All three parts are proved in essentially the same way, by combining \cref{thm:graph limit,lem:graphon LD,thm:left convergence}. As such, we omit some details that are repeated.
\begin{enumerate}[label=(\alph*)]
    \item \label{it:KNRS proof}
    Suppose first that $H$ is KNRS. If $W$ is a $p$-locally dense graphon, then by \cref{lem:graphon LD} there exists a sequence of graphs $G_n \to W$ which are $(p+o(1),o(1))$-locally dense. But then the fact that $H$ is KNRS implies that $t(H,G_n) \geq p^{e(H)}-o(1)$. Since $t(H,W) = \lim_n t(H,G_n)$ by \cref{thm:left convergence}, we find that $t(H,W) \geq p^{e(H)}$. For the converse, suppose that $H$ is not KNRS. Then the definition of KNRS fails for some fixed $p,\varepsilon \in (0,1)$. Thus, we may pick a sequence $G_n$ of $(p,o(1))$-locally dense graphs with $t(H,G_n) \leq p^{e(H)}-\varepsilon$. By \cref{thm:graph limit}, there is a convergent subsequence, which necessarily converges to a $p$-locally dense graphon $W$ by \cref{lem:graphon LD}. Hence $t(H,W) \leq \limsup_n t(H,G_n) \leq p^{e(H)}-\varepsilon$, a contradiction.

    \item This is proved in exactly the same way, with the added ingredient that a graphon $W$ is equal to $p$ a.e.\ if and only if it is the limit of a sequence of $(p+o(1),o(1))$-quasirandom graphs \cite[Example 11.37]{MR3012035}.

    \item This is proved in exactly the same way, except now using the fact that if $G_n \to W$, then $t(K_2,G_n) \to t(K_2,W)$ and thus the edge density of $W$ equals the asymptotic edge density of $G_n$.
    \qedhere
\end{enumerate}
\end{proof}

\subsection{Reiher's lemma and new measures}

We begin by proving \cref{lem:reiher graphon}.

\begin{proof}[Proof of \cref{lem:reiher graphon}]
    The ``if'' direction is immediate, for if we set $f$ to be the indicator function of a set $U$, the condition \eqref{eq:reiher graphon} is precisely the statement that $\iint_{U \times U} W \geq p \ab U^2$. So it remains only to prove the ``only if'' direction.

    Let $W$ be a $p$-locally dense graphon, and fix a bounded measurable function $f:\Omega \to [0,\infty)$. Note that \eqref{eq:reiher graphon} is invariant under rescaling of $f$, so we may assume that $f$ has codomain $[0,1]$. There is nothing to prove if $f=0$ a.e.,\ so we may assume that $\int_\Omega f(x) \dd x = \delta>0$. By \cref{lem:graphon LD}, we may pick a sequence of graphs $G_1,G_2,\dots$ such that $G_n$ is $(p_n,\delta_n)$-locally dense, where $p_n \to p$ and $\delta_n \to 0$, such that $\norm{W_{G_n}-W}_\square \to 0$ as $n \to \infty$.

    Recall that $W_{G_n}$ is defined by a partition of $\Omega$ into sets $I_1,\dots,I_{{v(G_n)}}$, each of measure $1/{v(G_n)}$. We define a function $f_n:V(G_n) \to [0,1]$ by setting $f_n(i)$ to be the average value of $f$ on the set $I_i$. Note that 
    \[
    \sum_{i \in V(G_n)} f_n(i) = {v(G_n)} \int_\Omega f(x)\dd x = \delta {v(G_n)}
    \]
    and that
    \[
    \sum_{ij \in E(G)}f_n(i) f_n(j) = \frac 12{v(G_n)}^2\iint_{\Omega \times \Omega}f(x) W_{G_n}(x,y)f(y)\dd x \dd y.
    \]
    Let $N_1$ be chosen so that $\delta_n \leq \delta$ for all $n \geq N_1$. Applying \cref{lem:reiher} to $G_n$ for $n\geq N_1$, we find that
    \begin{align*}
    p_n\norm f_1^2 &= \frac{p_n}{{v(G_n)}^2}\left(\sum_{i \in V(G_n)}f_n(i)\right)^2 
    \leq \frac{2}{{v(G_n)^2}} \sum_{ij \in E(G)} f_n(i) f_n(j) + \frac{2}{{v(G_n)}}\\
    &=\frac{2}{{v(G_n)}} + \iint_{\Omega \times \Omega} f(x) W_{G_n}(x,y)f(y)\dd x \dd y.
    \end{align*}
    On the other hand, by \eqref{eq:cut norm functions}, we have that
    \[
    \bab{\iint_{\Omega \times \Omega}f(x) (W_{G_n}-W)(x,y)f(y)\dd x \dd y} \leq \norm{W_{G_n}-W}_\square \overset{n\to \infty}{\longrightarrow}0.
    \]
    Combining these two estimates, plus the facts that $p_n \to p$ and ${v(G_n)}\to \infty$ as $n\to \infty$, gives the claimed result.
\end{proof}
We remark that we were unable to find a ``direct'' proof of \cref{lem:reiher graphon}, that is, a proof that does not use \cref{lem:reiher}. The difficulty is that Reiher's proof of \cref{lem:reiher} uses the compactness of the unit ball in $\R^n$, whereas the unit ball in $L^2(\Omega)$ is not compact. There are standard ways of overcoming such difficulties (e.g.\ working in the weak topology, where the unit ball is compact by the Banach--Alaoglu theorem), but we were unable to make such techniques work in this context. It would be interesting to find a direct proof of \cref{lem:reiher graphon}.

As mentioned before, \cref{lem:change-measure} is a direct consequence of \cref{lem:reiher graphon}.
\begin{proof}[Proof of~\cref{lem:change-measure}]
    It suffices to prove that for every bounded $f:\Omega \to [0,\infty)$, we have
    \[
    \iint_{\Omega \times \Omega} f(x)W(x,y)f(y)\dd \nu_w(x)\dd\nu_w(y) \geq p \norm f_{1,\nu_w}^2.
    \]
    By the definition of $\nu_w$, the left-hand side equals
    \[
    \iint_{\Omega \times \Omega} w(x)f(x) W(x,y) w(y)f(y)\dd \mu(x)\dd\mu(y),
    \]
    and the right-hand side equals $
    p\left(\int_\Omega w(x)f(x)\dd \mu(x)\right)^2.$
    However, if we define $g=wf$, then $g$ is a bounded function (as both $f$ and $w$ are bounded), so \cref{lem:reiher graphon} implies that
    \begin{align*}
        \iint_{\Omega \times \Omega} w(x)f(x) W(x,y) w(y)f(y)\dd \mu(x)\dd\mu(y)&= \iint_{\Omega \times \Omega} g(x) W(x,y) g(y)\dd \mu(x)\dd \mu(y)\\
        &\geq p \norm g_{1,\mu}^2
        =p \left(\int_\Omega g(x)\dd \mu(x)\right)^2\\
        &=p\left(\int_\Omega w(x)f(x)\dd \mu(x)\right)^2.
    \end{align*}
    This is precisely the claimed inequality.
\end{proof}

As a consequence of \cref{lem:change-measure}, we can prove \cref{lem:count-with-weights}.

\begin{proof}[Proof of \cref{lem:count-with-weights}]
    If $\norm{w}_1 = 0,$ the inequality holds trivially, so we may assume $\norm{w}_1 > 0.$ Define $w'(x) = w(x) / \norm w_1$ and let $\nu$ denote the probability measure associated to $w'.$ By \cref{lem:change-measure}, $W$ is $p$-locally dense with respect to $\nu.$ Since $H$ is KNRS, we have
    \begin{multline*}
        \int_{\Omega^{V(H)}} \prod_{uv \in E(H)} W(x_u, x_v) \prod_{v \in V(H)} w(x_v) \prod_{v \in V(H)} \dd\mu(x_v)=\\
        = \norm w_1^{v(H)} \int_{\Omega^{V(H)}} \prod_{uv \in E(H)} W(x_u, x_v) \prod_{v \in V(H)} \dd\nu(x_v) \ge \norm w_1^{v(H)} p^{e(H)}.        
    \end{multline*}
\end{proof}

\subsection{Kernels as linear and bilinear operators}
In this section we prove the remaining lemmas from \cref{sec:linear bilinear}. We begin with \cref{lem:PSD eigenvalues}.
\begin{proof}[Proof of \cref{lem:PSD eigenvalues}]
    First suppose that $\lambda<0$ for some eigenvalue $\lambda$ of $T_W$, and let $f$ be the corresponding eigenfunction with $\norm f_2=1$. As stated above, $f$ is bounded, and we have that $\inner f{T_W f} = \lambda \norm f_2^2 < 0$, showing that $W$ is not positive semidefinite.

    Conversely, suppose that all eigenvalues of $T_W$ are non-negative, and fix a bounded function $f:\Omega \to \R$; note that this implies $f \in L^2$. Since the eigenfunctions of $T_W$ form an orthonormal basis of $L^2$, we have that
    $\inner f{T_Wf} = \sum_{i=1}^\infty \lambda_i \inner{f}{f_i}^2 \geq 0,$
    hence $W$ is positive semidefinite.
\end{proof}

Next, we prove \cref{lem:LD iff copositive}.
\begin{proof}[Proof of \cref{lem:LD iff copositive}]
    For any bounded non-negative function $f$, we have that
    \[
    \iint_{\Omega \times \Omega} f(x)(W-p)(x,y)f(y)\dd x \dd y = \iint_{\Omega \times \Omega} f(x)W(x,y)f(y) - p\norm f_1^2,
    \]
    since by Fubini's theorem $\iint_{\Omega \times \Omega}f(x)f(y)\dd x \dd y = (\int_\Omega f(x)\dd x)^2$. But by \cref{lem:reiher graphon}, $W$ is $p$-locally dense if and only if $\iint_{\Omega \times \Omega} f(x) W(x,y)f(y)\dd x \dd y \geq p\norm f_1^2$ for all such functions, which by the above is equivalent to the statement that $W-p$ is copositive.
\end{proof}

Finally, we prove \cref{lem:hadamard PSD}. We remark that this result is fairly easy and well-known in case the kernels $W_1,W_2$ are assumed to be continuous functions $[0,1]^2 \to [0,1]$ (in this case it follows from Mercer's theorem, or from an appropriate infinitary analogue of the argument that the Hadamard product of two matrices is a principal submatrix of their tensor product). However, as we do not assume any continuity, the proof becomes somewhat lengthier.

\begin{proof}[Proof of \cref{lem:hadamard PSD}]
    We write the spectral decompositions of $W_1,W_2$ as
    \[
    W_1(x,y) \sim \sum_{i=1}^\infty \lambda_i f_i(x)f_i(y), \qquad W_2(x,y) \sim \sum_{j=1}^\infty \mu_j g_j(x) g_j(y)
    \]
    where $\sim$ denotes convergence in $L^2(\Omega \times \Omega)$. In these spectral decompositions, we may assume that $\lambda_i,\mu_j \neq0$ for all $i,j$ (by simply ommitting the terms corresponding to zero eigenvalues), and hence we may assume that $f_i,g_j$ are bounded for all $i,j$. Note that, by omitting the zero eigenvalues, we may no longer assume that $\{f_i\}$ forms an orthonormal basis of $L^2(\Omega)$ (as the eigenfunctions corresponding to zero eigenvalues are not spanned), but luckily we will not need them to form an orthonormal basis. Note that, by \cref{lem:PSD eigenvalues}, we have that $\lambda_i, \mu_j>0$ for all $i,j$.

    Define $W=W_1 \odot W_2$. For positive integers $K,M$, let
    
    \[
    W_1^K(x,y) \coloneqq \sum_{i=1}^K \lambda_i f_i(x) f_i(y) \qquad\text{and}\qquad W_2^M \coloneqq \sum_{j=1}^M \mu_j g_j(x) g_j(y).
    \]
    Let $W^{KM}\coloneqq W_1^K \odot W_2^M$, and note that we may write
    \[
    W^{KM}(x,y) = \sum_{i=1}^K \sum_{j=1}^M \lambda_i \mu_j h_{ij}(x) h_{ij}(y),
    \]
    where $h_{ij}(x)\coloneqq f_i(x)g_j(x)$ is a bounded measurable function $\Omega \to \R$. In particular, this decomposition shows that $W^{KM}$ is positive-semidefinite for all $K,M$, since for any bounded $a:\Omega \to \R$, we have
    \begin{align*}
    \iint_{\Omega \times \Omega}a(x) W^{KM} (x,y) a(y)\dd x \dd y &= \sum_{i=1}^K \sum_{j=1}^M \lambda_i \mu_j \iint_{\Omega \times \Omega} a(x) h_{ij}(x) a(y) h_{ij}(y)\dd x \dd y\\
    &=\sum_{i=1}^K\sum_{j=1}^M \lambda_i \mu_j \left(\int_\Omega a(x) h_{ij}(x)\dd x\right)^2 \geq 0,
    \end{align*}
    where the final inequality holds since every summand is non-negative. We also note here for future reference that the $L^2$-limit of positive semidefinite kernels is positive semidefinite. Indeed, suppose that $X_1,X_2,\dots,X$ are kernels with $\norm{X-X_k}_{L^2(\Omega \times \Omega)} \to 0$, and suppose that each $X_k$ is positive semidefinite. It is well-known (e.g.\ \cite[Proposition 5.5(ii)]{MR2129625}) that the $L^2 \to L^2$ operator norm of $T_X$ is upper-bounded by $\norm X_{L^2 (\Omega \times \Omega)}$. Applying this to $X-X_k$, as well as Cauchy--Schwarz, implies that for any bounded $a:\Omega \to \R$, we have
    \begin{align*}
    \ab{\inner{a}{T_X a}-\inner{a}{T_{X_k}a}}&=\ab{\inner{a}{T_{X-X_k}a}}
    \leq \norm a_2 \norm{T_{X-X_k}a}_2 
    \leq \norm a_2^2 \norm{T_{X-X_k}}_{L^2 \to L^2} \\
    &\leq \norm a_2^2 \norm{X-X_k}_{L^2(\Omega \times \Omega)} \overset{k \to \infty}{\longrightarrow}0.
    \end{align*}
    In other words, $\inner a{T_{X_k}a} \to \inner a{T_X a}$ as $k \to \infty$, implying that $\inner a{T_X a}\geq 0$, and hence that $X$ is positive semidefinite. This shows, as claimed,  that the limit of positive semidefinite kernels is positive semidefinite; we will shortly use this observation twice.

    For a fixed $K$, we now claim that $W^{KM} \to W_1^K \odot W_2$ as $M \to \infty$, where the convergence is in $L^2(\Omega \times \Omega)$. Indeed, we have that
    \begin{align*}
        \norm{W^{KM}-W_1^K \odot W_2}_2 = \norm{W_1^K(W_2^M-W_2)}_2 \leq \norm{W_1^K}_\infty \norm{W_2^M - W_2}_2.
    \end{align*}
    Since $W_1^K$ is a finite sum of bounded functions, it is bounded, so $\norm {W_1^K}_\infty$ is some finite number depending only on $K$. However, as $M\to \infty$, we have that $\norm{W_2^M-W_2}_2 \to 0$, by the definition of $W_2^M$. This shows that, for any fixed $K$, we have $W^{KM} \to W_1^K \odot W_2$ in $L^2$. In particular, as we showed that each $W^{KM}$ is positive semidefinite, we conclude that $W_1^K \odot W_2$ is positive semidefinite for all $K$.

    We now argue in almost the same way that $W_1^K \odot W_2 \to W$ in $L^2$; as above, this implies that $W$ is positive semidefinite, which is what we wanted to prove. So it remains to prove the convergence, which holds since
    \[
    \norm{W_1^K \odot W_2 - W}_2 = \norm{W_2 (W_1^K - W_1)}_2 \leq \norm{W_2}_\infty \norm{W_1^K - W_1}_2,
    \]
    and we know that $\norm{W_2}_\infty$ is finite as $W_2$ is a kernel, and that $W_1^K \to W_1$ in $L^2$ by the definition of $W_1^K$.
\end{proof}

\end{document}